\documentclass[smallextended,runningheads]{svjour3}
\usepackage{amsmath,amssymb,mathtools,mathabx,graphicx}
\journalname{Accepted}
\newcommand*{\id}{{\mathrm{id}}}

\newcommand*{\R}{{\mathbb R}}                                                 
\newcommand*{\C}{{\mathbb{C}}}

\newcommand*{\Hb}{{\mathbb H}}

\newcommand*{\Pb}{{\mathbb P}}                                                
\newcommand*{\Qb}{{\mathbb Q}}

\newcommand*{\pa}{{\partial}}

\newcommand*{\rd}{{\mathrm d}}

\newcommand*{\Gr}{{\mathrm{Gr}}}

\smartqed

\begin{document}
\title{Partial differential systems with nonlocal nonlinearities: Generation and solutions}
\author{Margaret Beck \and Anastasia Doikou \and
Simon~J.A.~Malham \and Ioannis Stylianidis}
\titlerunning{Partial differential systems with nonlocal nonlinearities}   
\authorrunning{Beck \and Doikou \and Malham \and Stylianidis}      

\institute{Margaret Beck \at Department of Mathematics and Statistics,
Boston University, Boston MA~02215, USA \email{mabeck@bu.edu}
\and Anastasia Doikou, Simon~J.A.~Malham and Ioannis Stylianidis
\at Maxwell Institute for Mathematical Sciences,        
and School of Mathematical and Computer Sciences,   
Heriot-Watt University, Edinburgh EH14 4AS, UK 
\email{A.Doikou@hw.ac.uk, S.J.A.Malham@hw.ac.uk, is11@hw.ac.uk}}

\date{30th January 2018}   

\maketitle

\begin{abstract}
We develop a method for generating solutions to large classes of evolutionary
partial differential systems with nonlocal nonlinearities. 
For arbitrary initial data, the solutions are generated from the corresponding 
linearized equations. The key is a Fredholm integral equation relating the 
linearized flow to an auxiliary linear flow. It is analogous to the 
Marchenko integral equation in integrable systems. We show explicitly how
this can be achieved through several examples including reaction-diffusion 
systems with nonlocal quadratic nonlinearities and the nonlinear Schr\"odinger
equation with a nonlocal cubic nonlinear-ity. In each case we demonstrate our
approach with numerical simulations. We discuss the effectiveness of our approach
and how it might be extended. 
\end{abstract}

\section{Introduction}\label{sec:intro}
Our concern is the generation of solutions to nonlinear partial
differential equations. In particular, as is natural, to develop
methods that generate such solutions from solutions to the corresponding
linearized equations. 
Herein we do not restrict ourselves to soliton equations, nor indeed 
to integrable systems. We do not demand nor require the existence of a Lax pair.
However our approach herein as it stands at this time, only applies to classes 
of partial differential systems with nonlocal nonlinearities.
Naturally we seek to extend it to more general systems and we discuss how this
might be achieved in our conclusions. However let us return to what we have 
achieved thus far and intend to achieve herein. 
In Beck, Doikou, Malham and Stylianidis~\cite{BDMS} we demonstrated the 
approach we developed indeed works for large classes of 
scalar partial differential equations with quadratic nonlocal nonlinearities. 
For example we demonstrated,
for general smooth initial data $g_0=g_0(x,y)$ with $x,y\in\R$ and 
some time $T>0$ of existence, how to construct solutions
$g\in C^\infty\bigl([0,T];C^{\infty}(\R^2;\R)\cap L^2(\R^2;\R)\bigr)$ 
to partial differential equations of the form 
\begin{equation*}
\pa_tg(x,y;t)=d(\pa_x)g(x,y;t)-\int_\R g(x,z;t)\,b(\pa_z)g(z,y;t)\,\rd z.
\end{equation*}
In this equation, $d=d(\pa_x)$ is a polynomial function of the partial differential
operator $\pa_x$ with constant coefficients, while $b$ is either a polynomial
function $b=b(\pa_x)$ of $\pa_x$ with constant coefficients, or it is a smooth
bounded function $b=b(x)$ of $x$. Thus the linear term $d(\pa_x)\,g(x,y;t)$ is quite general, 
while the quadratic nonlinear term, whilst also quite general, has the nonlocal form shown. 
Hereafter for convenience we denote this nonlocal product by `$\star$', 
defined for any two functions $g,g^\prime\in L^2(\R^2;\R)$ by 
\begin{equation*}
\bigl(g\star g^\prime\bigr)(x,y)\coloneqq\int_\R g(x,z)\,g^\prime(z,y)\,\rd z.
\end{equation*}
Hence for example the nonlocal nonlinear term above can be
expressed as $\bigl(g\star (bg)\bigr)(x,y;t)$.

In this paper we extend our method in two directions. First we extend it to classes of 
systems of partial differential equations with quadratic nonlocal nonlinearities. For
example we demonstrate, for general smooth initial data $u_0=u_0(x,y)$ and
$v_0=v_0(x,y)$ with $x,y\in\R$ and some time $T>0$, how to construct 
solutions $u,v\in C^\infty\bigl([0,T];C^{\infty}(\R^2;\R)\cap L^2(\R^2;\R)\bigr)$
to partial differential systems with quadratic nonlocal
nonlinearities of the form
\begin{align*}
\pa_tu&=d_{11}(\pa_1)u+d_{12}(\pa_1)v-u\star(b_{11}u)-u\star(b_{12}v)-v\star(b_{12}u)-v\star(b_{11}v),\\
\pa_tv&=d_{11}(\pa_1)v+d_{12}(\pa_1)u-u\star(b_{11}v)-u\star(b_{12}u)-v\star(b_{12}v)-v\star(b_{11}u).
\end{align*}
In this formulation the operators $d_{11}=d_{11}(\pa_1)$, 
$d_{12}=d_{12}(\pa_1)$ are polynomials of $\pa_1$ analogous to the 
operator $d$ above, the operation $\star$ is as defined above 
and $b_{11}$ and $b_{12}$ are analogous functions to the function $b$ defined above.
In the special case that $d_{11}$ and $d_{22}$ are both constant multiples of $\pa_1^2$ 
and $b_{11}$ and $b_{12}$ are scalar constants, then the system of equations for
$u$ and $v$ above represent a system of reaction-diffusion equations
with nonlocal nonlinear reaction/interaction terms.

Second, with a slight modification, we extend our approach to classes of 
partial differential equations with cubic and higher odd degree 
nonlocal nonlinearities. In particular, for general smooth $\C$-valued
initial data $g_0=g_0(x,y)$ with $x,y\in\R$ and some time $T>0$,
we demonstrate how to construct solutions 
$g\in C^\infty\bigl([0,T];C^{\infty}(\R^2;\C)\cap L^2(\R^2;\C)\bigr)$ 
to nonlocal nonlinear partial differential equations of the form
($\mathrm{i}=\sqrt{-1}$),
\begin{equation*}
\mathrm{i}\,\pa_tg=d(\pa_1)g+g\star f^\star(g\star g^\dag).
\end{equation*}
Here with a slight abuse of notation, we suppose
\begin{equation*}
(g\star g^\dag)(x,y)\coloneqq
\int_{\R}g(x,z)\,g^*(y,z)\,\rd z,
\end{equation*}
where $g^*$ denotes the complex conjugate of $g$. Our method works
for any choice of $d$ of the form $d=\mathrm{i}h(\pa_1)$, where $h$ is 
any constant coefficient polynomial with only even degree terms of its argument. 
Further, it works for any function $f^\star$ with a power series representation 
with infinite radius of convergence and real coefficients $\alpha_m$ of the form
\begin{equation*}
f^\star(c)=\mathrm{i}\sum_{m\geqslant0}\alpha_m c^{\star m}.
\end{equation*}
The expression $c^{\star m}$ represents the $m$-fold $\star$ product 
of $c\in L^2(\R^2;\C)$.

Our method is based on the development of Grassmannian flows from 
linear subspace flows as follows; see Beck \textit{et al.\/}~\cite{BDMS}.
Formally, suppose that $Q=Q(t)$ and $P=P(t)$ are linear operators satisfying the following 
linear system of evolution equations in time $t$, 
\begin{equation*}
\pa_tQ=AQ+BP
\qquad\text{and}\qquad  
\pa_tP=CQ+DP.
\end{equation*}
We assume that $A$ and $C$ are bounded linear operators, while $B$ and $D$ 
may be bounded or unbounded operators. Throughout their time interval of 
existence say on $[0,T]$ with $T>0$, we suppose $Q-\id$ and $P$ to be 
compact operators, indeed Hilbert--Schmidt operators. Thus $Q$ itself 
is a Fredholm operator. If $B$ and $D$ are unbounded operators we suppose
$Q-\id$ and $P$ to lie in a suitable subset of the class of Hilbert--Schmidt
operators characterised by their domains. We now posit a relation between 
$P=P(t)$ and $Q=Q(t)$ mediated through a compact Hilbert--Schmidt operator $G=G(t)$ 
as follows,
\begin{equation*}
P=G\,Q.
\end{equation*}
Suppose we now differentiate this relation with respect to time using the product rule
and insert the evolution equations for $Q=Q(t)$ and $P=P(t)$ above. If we then equivalence 
by the Fredholm operator $Q=Q(t)$, i.e.\/ post-compose by $Q^{-1}=Q^{-1}(t)$ 
on the time interval on which it exists, we obtain the following 
Riccati evolution equation for $G=G(t)$,
\begin{equation*}
\pa_tG=C+D\,G-G\,(A+B\,G).
\end{equation*}
This demonstrates how certain classes of quadratically nonlinear 
operator-valued evolution equations, i.e.\/ the equation for $G=G(t)$ above,
can be generated from a coupled pair of linear operator-valued equations,
i.e.\/ the equations for $Q=Q(t)$ and $P=P(t)$ above. We think of the prescription 
just given as the ``abstract'' setting in which $Q=Q(t)$, $P=P(t)$ and $G=G(t)$
are operators of the classes indicated. Note that often we will take $A=C=O$
and the equations for $Q=Q(t)$ and $P=P(t)$ above are $\pa_tQ=BP$ and $\pa_tP=DP$.
In this case, once we have solved the evolution equation for $P=P(t)$, we can
then solve the equation for $Q=Q(t)$.

We can generate cubic and higher odd degree classes of nonlinear 
operator-valued evolution equations analogous to that for $G=G(t)$ above
by slightly modifying the procedure we outlined. Again, formally,
suppose that $Q=Q(t)$ and $P=P(t)$ are linear operators satisfying the following 
linear system of evolution equations in time $t$, 
\begin{equation*}
\pa_tQ=f(PP^\dag)\,Q
\qquad\text{and}\qquad  
\pa_tP=DP,
\end{equation*}
where $P^\dag=P^\dag(t)$ denotes the operator adjoint to $P=P(t)$ 
and $f$ is a function with a power series expansion with infinite radius of convergence. 
The operator $D$ may be a bounded or unbounded operator.
In addition we require that $Q=Q(t)$ satisfies the constraint $QQ^\dag=\id$ while 
it exists. Indeed as above, throughout their time interval of 
existence say on $[0,T]$ with $T>0$, we suppose $Q-\id$ and $P$ to be 
Hilbert--Schmidt operators. If $D$ is unbounded then we suppose $P$ lies 
in a suitable subset of the class of Hilbert--Schmidt operators characterised by its domain.
We can think of the equations above as corresponding to the previous
set of equations for $Q=Q(t)$ and $P=P(t)$ in the paragraph above with
the choice $B=C=O$ and $A=f(PP^\dag)$. We emphasize however, once we have 
solved the evolution equation for $P=P(t)$, the evolution equation for $Q=Q(t)$ is linear. 
We posit the same linear relation $P=G\,Q$ between $P=P(t)$ and $Q=Q(t)$ as before, mediated
through a compact Hilbert--Schmidt operator $G=G(t)$.
Then a direct analogous calculation to that above, differentiating this relation with respect 
to time and so forth, reveals that $G=G(t)$ satisfies the evolution equation
\begin{equation*}
\pa_tG=D\,G-G\,f(GG^\dag).
\end{equation*}
The requirement that $Q=Q(t)$ must satisfy the constraint $QQ^\dag=\id$
induces the requirement that $f^\dag=-f$. Hence again, we can generate 
certain classes of cubic and higher odd degree nonlinear 
operator-valued evolution equations, like that for $G=G(t)$ just above,
by first solving the operator-valued linear evolution equation for $P=P(t)$
and then solving the operator-valued linear evolution equation for $Q=Q(t)$.
To summarize, we observe that in both procedures above, 
there were three essential components as follows, a linear:
\begin{enumerate}
\item Base equation: $\pa_tP=DP$;
\item Auxiliary equation: $\pa_tQ=BP$ or $\pa_tQ=f(PP^\dag)\,Q$;
\item Riccati relation: $P=G\,Q$.
\end{enumerate}
We now make an important observation and ask two crucial questions. 
First, we observe that solving each of the three linear
equations above in turn actually generates solutions $G=G(t)$ to the classes
of operator-valued nonlinear evolution equations shown above. Second, 
in the appropriate context, can we interpret the operator-valued nonlinear 
evolution equations above as nonlinear partial differential equations? 
Third, if so, what classes of nonlinear partial differential equations
fit into this context and can be solved in this way? In other words,
can we solve the inverse problem: given a nonlinear partial differential
equation, can we fit it into the context above (or an analogous context) 
and solve it for arbitrary initial data by solving the corresponding
three linear equations above in turn? 

Briefly and formally, keeping technical details to a minimum for the moment,
a simple example that addresses these issues, answers these questions 
positively and outlines our proposed procedure is as follows. Suppose 
$\mathbb Q$ is a closed linear subspace of $L^2(\R;\R^2)$ and that
$\mathbb P$ is the complementary subspace to $\mathbb Q$ in the 
direct sum decomposition $L^2(\R;\R^2)=\mathbb Q\oplus\mathbb P$.
Suppose for each $t\in[0,T]$ for some $T>0$ that $Q=Q(t)$ is a Fredholm operator 
from $\mathbb Q$ to $\mathbb Q$ of the form $Q=\id+Q^\prime$, and that
$Q^\prime=Q^\prime(t)$ is a Hilbert--Schmidt operator. Further we assume 
$P(t)\colon\mathbb Q\to\mathbb P$ is a Hilbert--Schmidt operator for $t\in[0,T]$.
Technically, as mentioned above, we require $Q^\prime$ and $P$ to exist in 
appropriate subspaces of the class of Hilbert--Schmidt operators. However we 
suppress this fact for now to maintain clarity and brevity (explicit details
are given in the following sections). With this context while they exist, 
$Q^\prime=Q^\prime(t)$ and $P=P(t)$ can both be represented 
by integral kernels $q^\prime=q^\prime(x,y;t)$ and $p=p(x,y;t)$, respectively, where
$x,y\in\R$ and $t\in[0,T]$. Suppose that $D=\pa_x^2$ and $B=1$ so that
the base and auxiliary equations have the form
\begin{equation*}
\pa_tp(x,y;t)=\pa_x^2p(x,y;t)\qquad\text{and}\qquad\pa_tq^\prime(x,y;t)=p(x,y;t).
\end{equation*}
The linear Riccati relation in this context takes the form of the linear Fredholm equation
\begin{equation*}
p(x,y;t)=g(x,y;t)+\int_\R g(x,z;t)\,q^\prime(z,y;t)\,\rd z.
\end{equation*}
We can express this more succinctly as $p=g+g\star q^\prime$ or $p=g\star(\delta+q^\prime)$, where
$\delta$ is the identity operator with respect to the $\star$ product. 
As described above in the ``abstract'' operator-valued setting, we can 
differentiate the relation $p=g\star(\delta+q^\prime)$ with respect to time using
the product rule and insert the base and linear equations $\pa_tp=\pa_1^2p$ and
$\pa_t q^\prime=p$ to obtain the following
\begin{align*}
(\pa_t g)\star(\delta+q^\prime)
&=\pa_tp-g\star\pa_tq^\prime\\
&=(\pa_1^2g)\star(\delta+q^\prime)-g\star(g\star(\delta+q^\prime))\\
&=(\pa_1^2g-g\star g)\star(\delta+q^\prime).
\end{align*}
In the last step we utilized the associativity property 
$g\star(g\star q^\prime)=(g\star g)\star q^\prime$ which is equivalent to the relabelling 
$\int_\R g(x,z;t)\int_\R g(z,\zeta;t)\,q^\prime(\zeta,y;t)\,\rd\zeta\,\rd z
=\int_\R\int_\R g(x,\zeta;t)\,g(\zeta,z;t)\,\rd\zeta\,q^\prime(z,y;t)\,\rd z$.
We now equivalence by $Q=Q(t)$, i.e.\/ post-compose by $\tilde Q\coloneqq Q^{-1}$. This
is equivalent to ``multiplying'' the equation above by $\star(\delta+\tilde q^\prime)$ 
where $(\delta+q^\prime)\star(\delta+\tilde q^\prime)=\delta$ and $\tilde q^\prime$ is the integral
kernel associated with $\tilde Q-\id$. We thus observe that $g=g(x,y;t)$
necessarily satisfies the nonlocal nonlinear partial differential equation
\begin{equation*}
\pa_t g=\pa_1^2g-g\star g
\end{equation*}
or more explicitly
\begin{equation*}
\pa_t g(x,y;t)=\pa_x^2g(x,y;t)-\int_\R g(x,z;t)\,g(z,y;t)\,\rd z.
\end{equation*}
Further now suppose, given initial data $g(x,y;0)=g_0(x,y)$ we wish to
solve this nonlocal nonlinear partial differential equation. We observe 
that we can explicitly solve, in closed form via Fourier transform, 
for $p=p(x,y;t)$ and then $q^\prime=q^\prime(x,y;t)$. 
We take $q^\prime(x,y;0)=0$ and $p(x,y;0)=g_0(x,y)$. 
This choice is consistent with the Riccati relation evaluated at time $t=0$.
Then the solution of the Riccati relation by iteration or other means, and in some
cases explicitly, generates the solution $g=g(x,y;t)$ to the nonlocal 
nonlinear partial differential equation above corresponding to the initial data $g_0$.
We have thus now seen the ``abstract'' setting and the connection to 
nonlocal nonlinear partial differential equations and their solution,
and thus started to lay the foundations to validating our claims at the 
very beginning of this introduction.

The approach we have outlined above, for us, has its roots in the 
series of papers in numerical spectral theory in which Riccati equations
were derived and solved in order to resolve numerical difficulties
associated with linear spectral problems. These difficulties were associated
with different exponential growth rates in the far-field. See for example 
Ledoux, Malham and Th\"ummler~\cite{LMT}, Ledoux, Malham, Niesen and Th\"ummler~\cite{LMNT}, 
Karambal and Malham~\cite{KM} and Beck and Malham~\cite{BM} for more details
of the use of Riccati equations and Grassmann flows to help numerically
evaluate the pure-point spectra of linear elliptic operators.  
In Beck \textit{et al.\/ }~\cite{BDMS} we turned the question around and 
asked whether the Riccati equations, which in infinite dimensions represent
nonlinear partial differential equations, could be solved by the reverse
process.
 
The notion that integrable nonlinear partial differential equations can 
be generated from solutions to the corresponding linearized equation
and a linear integral equation, namely the Gel'fand--Levitan--Marchenko equation,
goes back over forty years. For example it is mentioned in   
the review by Miura~\cite{Miura}. Dyson~\cite{Dyson} in particular showed
the solution to the Korteweg de Vries equation can be generated from the 
solution to the Gel'fand--Levitan--Marchenko equation along the diagonal.
See for example Drazin and Johnson~\cite[p.~86]{DJ}.
Further results of this nature for other integrable systems 
are summarized in Ablowitz, Ramani and Segur~\cite{ARSII}. 
Then through a sequence of papers P\"oppe~\cite{P-SG,P-KdV,P-KP}, 
P\"oppe and Sattinger~\cite{PS-KP} and Bauhardt and P\"oppe~\cite{BP-ZS},
carried through the programme intimated above. Also in a series
of papers Tracy and Widom, see for example~\cite{TW}, have also
generated similar results. Besides those
already mentioned, the papers by Sato~\cite{SatoI,SatoII},
Segal and Wilson~\cite{SW}, Wilson~\cite{W},
Bornemann~\cite{Btalk}, McKean~\cite{McKean},
Grellier and Gerard~\cite{Gerard} and Beals and Coifman~\cite{BC},
as well as the manuscript by Guest~\cite{MG} were also highly 
influential in this regard.

We note that our second prescription above is analogous 
to that of classical integrable systems and the 
Darboux-dressing transformation. 
The notion of classical integrability in $1+1$ dimensions is synonymous 
with the existence of a Lax pair $(\tilde L,\tilde D)$. 
The Lax pair may consist of differential operators depending 
on the field, i.e.\/ the solution of the associated 
nonlinear integrable partial differential equation, or field valued matrices, 
which can also depend on a spectral parameter. 
The Lax pair satisfies the so called auxiliary linear problem
\begin{equation*}
\tilde L \Psi = \lambda \Psi
\qquad\text{and}\qquad
\partial_t \Psi = \tilde D \Psi. 
\end{equation*}
Here $\Psi$ is called the auxiliary function and $\lambda$ 
is the spectral parameter which is constant in time. 
Compatibility between the two equations above leads to 
the zero curvature condition
\begin{equation*}
\partial_t \tilde L = [\tilde D,\tilde L],
\end{equation*}
which generates the nonlinear integrable equation.
The Darboux-dressing transformation is an efficient and 
elegant way to obtain solutions of the integrable equation
using linear data; see Matveev \& Salle~\cite{MS} 
and Zakharov \& Shabat~\cite{ZS}. 
Let us focus on the $t$-part of the auxiliary linear problem 
to make the connection with our present formulation more concrete.
In the context of integrable systems the Darboux-dressing 
prescription takes the form of a:
(i) Base equation or linearized formulation: $\pa_tP=DP$;
(ii) Auxiliary or modified or dressed equation: $\pa_tQ=\tilde DQ$; and
(iii) Riccati relation or dressing transformation: $P=G\,Q$. 
In the integrable systems frame $D$ is a linear differential operator 
and $\tilde D$ is a nonlinear differential operator that can be 
determined via the dressing process; see  Zakharov \& Shabat~\cite{ZS}
and Drazin and Johnson~\cite{DJ}. The classic example
is the Korteweg de Vries equation, in which case
$D= -4 \partial_x^3$ and 
$\tilde D =-4 \partial_x^3 + 6 u(x,t) \partial_x+ \partial_x u(x,t)$.
In the integrability context extra symmetries 
and thus integrability is provided by the existence of 
the operator $\tilde L$ of the Lax pair. For the 
Korteweg de Vries equation $\tilde L=-\partial^2_x+u(x,t)$.
That the field $u$ satisfies the Korteweg de Vries equation
is ensured by the zero curvature condition.
In our formulation on the other hand, we do not assume 
the existence of a Lax pair as we do not necessarily require 
integrability, thus less symmetry is presupposed. We focus 
on the time part of the Darboux transform described by the 
equations (i)--(iii) just above. They 
yield the equation for the transformation $G$ 
(see also Adamopoulou, Doikou \& Papamikos~\cite{ADP}):
\begin{equation*}
\partial_t G = D\,G - G\,\tilde D.
\end{equation*}
In the present general description the operators $D$ and $\tilde D$ are
known and both linear; at least in all the examples we consider herein.
The operator $G$ turns out to satisfy the associated nonlinear and 
nonlocal partial differential equation just above. 
Depending on the exact form of $\tilde D$ various cases of nonlinearity 
can be considered as will be discussed in detail in what follows. 
Indeed, below we investigate various situations regarding the form of the nonlinear 
operator $\tilde D$, which give rise to qualitatively different 
nonlocal, nonlinear equations. These can be seen as nonlocal 
generalizations of well known examples of integrable equations, 
such as the Korteweg de Vries and nonlinear Schr\"odinger equations and so forth.

Lastly, we remark that Riccati systems play a central role in optimal control theory. 
In particular, the solution to a matrix Riccati equation provides the optimal 
continuous feedback operator in linear-quadratic control. In such systems the 
state is governed by a linear system of equations analogous to those
for $Q$ and $P$ above, and the goal is to optimize a given quadratic cost function.
See for example Martin and Hermann~\cite{MH}, Brockett and Byrnes~\cite{BB} 
and Hermann and Martin~\cite{HM} for more details.

Our paper is structured as follows. In \S\ref{sec:quadflows}
we outline our procedure for generating solutions to
partial differential systems with quadratic nonlocal nonlinearities 
from the corresponding linearized flow. We then examine the 
slightly modified procedure for generating such solutions for 
partial differential systems with cubic and higher odd degree 
nonlocal nonlinearities in \S\ref{sec:generalflows}. 
In \S\ref{sec:examples} we apply our method to a series of
six examples, including a nonlocal reaction-diffusion system, the nonlocal 
Korteweg de Vries equation and 
two nonlocal variants of the nonlinear Schr\"odinger equation,
one with cubic nonlinearity and one with a sinusoidal nonlinearity.
For each of the examples just mentioned we provide numerical simulations 
and details of our numerical methods. Using our method we also
derive an explicit form for solutions to a special case of the 
nonlocal Fisher--Kolmogorov--Petrovskii--Piskunov equation
from biological systems. Finally in \S\ref{sec:conclu} 
we discuss extensions to our method we intend to pursue.
We provide the Matlab programs we used for our simulations
in the supplementary electronic material.

\section{Nonlocal quadratic nonlinearities}\label{sec:quadflows} 
In this section we review and at the same time extend to systems 
our Riccati method for generating solutions to partial differential
equations with quadratic nonlocal nonlinearities. For further 
background details, see Beck \textit{et al.\/}~\cite{BDMS}.
Our basic context is as follows. We suppose we have a separable 
Hilbert space $\Hb$ that admits a direct sum decomposition 
$\Hb=\mathbb{Q}\oplus\mathbb{P}$ into closed subspaces $\Qb$ and $\Pb$.
The set of all subspaces `comparable' in size to $\Qb$ is called
the Fredholm Grassmann manifold $\Gr(\Hb,\mathbb{Q})$. 
Coordinate patches of $\Gr(\Hb,\mathbb{Q})$ 
are graphs of operators $\mathbb{Q}\to\mathbb{P}$ parametrized by, say, $G$. 
See Sato~\cite{SatoII} and Pressley and Segal~\cite{PS} for more details. 

We consider a linear evolutionary flow on the subspace $\Qb$ 
which can be parametrized by two linear operators $Q(t)\colon\Qb\to\Qb$ 
and $P(t)\colon\Qb\to\Pb$ for $t\in[0,T]$ for some $T>0$. More precisely,
we suppose the operator $Q=Q(t)$ is a compact perturbation of the identity, 
and thus a Fredholm operator. Indeed we assume $Q=Q(t)$ has the form $Q=\id+Q^\prime$
where `$\id$' is the identity operator on $\Qb$. We assume for some $T>0$ that  
$Q^\prime\in C^{\infty}\bigl([0,T];\mathfrak J_2(\Qb;\Qb)\bigr)$ and 
$P\in C^{\infty}\bigl([0,T];\mathfrak J_2(\Qb;\Pb)\bigr)$ where 
$\mathfrak J_2(\Qb;\Qb)$ and $\mathfrak J_2(\Qb;\Pb)$ denote the 
class of Hilbert--Schmidt operators from $\Qb\to\Qb$ 
and $\Qb\to\Pb$, respectively. Note that $\mathfrak J_2(\Qb;\Qb)$ 
and $\mathfrak J_2(\Qb;\Pb)$ are Hilbert spaces. Our analysis,
as we see presently, involves two, in general unbounded, 
linear operators $D$ and $B$. In our equations these operators
act on $P$, and since for each $t\in[0,T]$ we would like 
$DP\in\mathfrak J_2(\mathbb Q;\mathbb P)$ 
and $BP\in\mathfrak J_2(\mathbb Q;\mathbb Q)$,
we will assume that 
$P\in C^{\infty}\bigl([0,T];\mathrm{Dom}(D)\cap\mathrm{Dom}(B)\bigr)$.
Here $\mathrm{Dom}(D)\subseteq\mathfrak J_2(\Qb;\Pb)$ and 
$\mathrm{Dom}(B)\subseteq\mathfrak J_2(\Qb;\Pb)$ represent the domains
of $D$ and $B$ in $\mathfrak J_2(\Qb;\Pb)$. Hence in summary, we assume 
\begin{equation*}
P\in C^{\infty}\bigl([0,T];\mathrm{Dom}(D)\cap\mathrm{Dom}(B)\bigr)
\qquad\text{and}\qquad
Q^\prime\in C^{\infty}\bigl([0,T];\mathfrak J_2(\Qb;\Qb)\bigr).
\end{equation*}
Our analysis also involves two bounded linear operators $A=A(t)$
and $C=C(t)$. Indeed we assume that $A\in C^\infty\bigl([0,T];\mathfrak J_2(\Qb;\Qb)\bigr)$
and $C\in C^\infty\bigl([0,T];\mathfrak J_2(\Qb;\Pb)\bigr)$.
We are now in a position to prescribe the evolutionary flow
of the linear operators $Q=Q(t)$ and $P=P(t)$ as follows.
\begin{definition}[Linear Base and Auxiliary Equations]
We assume there exists a $T>0$ such that,
for the linear operators $A$, $B$, $C$ and $D$ 
described above, the linear operators 
$P\in C^{\infty}\bigl([0,T];\mathrm{Dom}(D)\cap\mathrm{Dom}(B)\bigr)$ and 
$Q^\prime\in C^{\infty}\bigl([0,T];\mathfrak J_2(\Qb;\Qb)\bigr)$
satisfy the linear system of operator equations 
\begin{equation*}
\pa_tQ=AQ+BP,  
\qquad\text{and}\qquad
\pa_tP=CQ+DP,
\end{equation*}
where $Q=\id+Q^\prime$. We take $Q^\prime(0)=O$ at time $t=0$ so that $Q(0)=\id$.
We call the evolution equation for $P=P(t)$ the \emph{base equation} 
and the evolution equation for $Q=Q(t)$ the \emph{auxiliary equation}.
\end{definition}
\begin{remark} We note the following:
(i) \emph{Nomenclature:} The base and auxiliary equations 
above are a coupled pair of linear evolution equations for the operators
$P=P(t)$ and $Q=Q(t)$. In many applications and indeed for
all those in this paper $C=O$. In this case the 
equation for $P=P(t)$ collapses to the stand alone equation
$\pa_tP=DP$. For this reason we call it the base equation and
we think of the equation prescribing the evolution of $Q=Q(t)$
as the auxiliary equation; and  
(ii) \emph{In practice:} In all our examples in \S\ref{sec:examples}
we can solve the base and auxiliary equations for $P=P(t)$ 
and $Q=Q(t)$ giving explicit closed form solution expressions 
for all $t\geqslant0$. 
\end{remark}
In addition to the linear base and auxiliary equations
above, we posit a linear relation between $P=P(t)$ and $Q=Q(t)$ as follows.
\begin{definition}[Riccati relation]
We assume there exists a $T>0$ such that, for 
$P\in C^{\infty}\bigl([0,T];\mathrm{Dom}(D)\cap\mathrm{Dom}(B)\bigr)$ and 
$Q^\prime\in C^{\infty}\bigl([0,T];\mathfrak J_2(\Qb;\Qb)\bigr)$,
there exists a linear operator 
$G\in C^{\infty}\bigl([0,T];\mathrm{Dom}(D)\cap\mathrm{Dom}(B)\bigr)$
satisfying the linear Fredholm equation
\begin{equation*}
P=G\,Q,
\end{equation*}
where $Q=\id+Q^\prime$. We call this the \emph{Riccati relation}.
\end{definition}
The existence of a solution to the Riccati relation is governed by the 
regularized Fredholm determinant $\mathrm{det}_2(\id+Q^\prime)$ for the 
Hilbert--Schmidt class operator $Q^\prime=Q^\prime(t)$. For any linear
operator $Q^\prime\in\mathfrak J_2(\Qb;\Qb)$ this regularized Fredholm 
determinant is given by (see Simon~\cite{Simon:Traces} and Reed and Simon~\cite{RSIV})
\begin{equation*}
\mathrm{det}_2\bigl(\id+Q^\prime\bigr)
\coloneqq\exp\Biggl(\sum_{\ell\geqslant2}
\frac{(-1)^{\ell-1}}{\ell}\mathrm{tr}\,(Q^\prime)^\ell\Biggr),
\end{equation*}
where `$\mathrm{tr}$' represents the trace operator. We note that
$\|Q^\prime\|_{\mathfrak J_2(\Qb;\Qb)}^2\equiv\mathrm{tr}\,|Q^\prime|^2$.
The operator $\id+Q^\prime$ is invertible if and only if 
$\mathrm{det}_2\bigl(\id+Q^\prime\bigr)\neq0$; again see Simon~\cite{Simon:Traces} 
and Reed and Simon~\cite{RSIV} for more details.
\begin{lemma}[Existence and Uniqueness: Riccati relation]\label{lemma:EandU}
Assume there exists a $T>0$ such that
$P\in C^{\infty}\bigl([0,T];\mathrm{Dom}(D)\cap\mathrm{Dom}(B)\bigr)$,
$Q^\prime\in C^{\infty}\bigl([0,T];\mathfrak J_2(\Qb;\Qb)\bigr)$
and $Q^\prime(0)=O$. Then there exists a $T^\prime>0$ with $T^\prime\leqslant T$ 
such that for $t\in[0,T^\prime]$ we have 
$\mathrm{det}_2\bigl(\id+Q^\prime(t)\bigr)\neq0$ and
$\|Q^\prime(t)\|_{\mathfrak J_2(\Qb;\Qb)}<1$.
In particular, there exists a unique solution 
$G\in C^{\infty}\bigl([0,T^\prime];\mathrm{Dom}(D)\cap\mathrm{Dom}(B)\bigr)$
to the Riccati relation.
\end{lemma}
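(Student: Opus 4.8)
The plan is to use the continuity of $t\mapsto Q^\prime(t)$ together with the initial condition $Q^\prime(0)=O$ to isolate a short time interval on which $\id+Q^\prime(t)$ is boundedly invertible, and then simply to \emph{define} $G(t)\coloneqq P(t)\,(\id+Q^\prime(t))^{-1}$, afterwards verifying that this operator lies in the asserted regularity class and is the unique solution of $P=G\,Q$.

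First I would observe that, since $Q^\prime\in C^{\infty}\bigl([0,T];\mathfrak J_2(\Qb;\Qb)\bigr)$ and $Q^\prime(0)=O$, the real-valued map $t\mapsto\|Q^\prime(t)\|_{\mathfrak J_2(\Qb;\Qb)}$ is continuous on $[0,T]$ and vanishes at $t=0$; hence there exists $T^\prime\in(0,T]$ with $\|Q^\prime(t)\|_{\mathfrak J_2(\Qb;\Qb)}<1$ for all $t\in[0,T^\prime]$. Since the operator norm is dominated by the Hilbert--Schmidt norm, the Neumann series $\sum_{k\geqslant0}(-Q^\prime(t))^k$ then converges in operator norm, so $Q(t)=\id+Q^\prime(t)$ is invertible with bounded inverse; moreover $\tilde Q^\prime(t)\coloneqq Q(t)^{-1}-\id=-Q^\prime(t)\,Q(t)^{-1}$ is the product of the Hilbert--Schmidt operator $Q^\prime(t)$ with a bounded operator, hence $\tilde Q^\prime(t)\in\mathfrak J_2(\Qb;\Qb)$ and $Q(t)^{-1}=\id+\tilde Q^\prime(t)$ is the inverse in the Fredholm sense. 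Invoking the characterisation of invertibility of $\id+Q^\prime$ through the regularised determinant recalled above (Simon~\cite{Simon:Traces}, Reed and Simon~\cite{RSIV}), invertibility on $[0,T^\prime]$ is equivalent to $\mathrm{det}_2(\id+Q^\prime(t))\neq0$ there, which yields the stated non-vanishing.

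Next I would set $G(t)\coloneqq P(t)\,Q(t)^{-1}=P(t)+P(t)\,\tilde Q^\prime(t)$ on $[0,T^\prime]$ and check the two remaining claims. For membership in $\mathrm{Dom}(D)\cap\mathrm{Dom}(B)$: since $D$ and $B$ act on $P$ from the left whereas $Q^{-1}$ is post-composed, one has $D(PQ^{-1})=(DP)Q^{-1}$ and $B(PQ^{-1})=(BP)Q^{-1}$, and as $DP,BP\in\mathfrak J_2(\Qb;\Pb)$ with $Q^{-1}$ bounded these products again lie in $\mathfrak J_2(\Qb;\Pb)$; thus $\mathrm{Dom}(D)$ and $\mathrm{Dom}(B)$ are stable under right multiplication by bounded operators and $G(t)\in\mathrm{Dom}(D)\cap\mathrm{Dom}(B)$. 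For the $C^\infty$-in-time statement: operator inversion is a smooth map on the open set of invertible bounded operators, so $t\mapsto Q(t)^{-1}$ is $C^\infty$ into the bounded operators; combining this with $\tilde Q^\prime=-Q^\prime Q^{-1}$, the continuity of multiplication from $\mathfrak J_2(\Qb;\Qb)$ times the bounded operators into $\mathfrak J_2(\Qb;\Qb)$, and the smoothness of $Q^\prime$ into $\mathfrak J_2(\Qb;\Qb)$, one gets $\tilde Q^\prime\in C^{\infty}\bigl([0,T^\prime];\mathfrak J_2(\Qb;\Qb)\bigr)$, and hence $G=P+P\tilde Q^\prime\in C^{\infty}\bigl([0,T^\prime];\mathrm{Dom}(D)\cap\mathrm{Dom}(B)\bigr)$ by the Leibniz rule together with the mapping properties just noted, working with the graph norms on the domains. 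Uniqueness is then immediate: if $G_1Q=G_2Q=P$ on $[0,T^\prime]$, then $(G_1-G_2)Q=O$, and post-composing with $Q^{-1}$ forces $G_1=G_2$.

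The Neumann-series estimate and the uniqueness argument are routine; the part demanding genuine care — and the main obstacle — is the bookkeeping around the unbounded operators $D$ and $B$. One must be sure that every manipulation of the form $D(PQ^{-1})=(DP)Q^{-1}$ is legitimate, i.e.\ that $PQ^{-1}$ genuinely lies in $\mathrm{Dom}(D)\cap\mathrm{Dom}(B)$ and that the asserted $C^\infty$-dependence on $t$ holds with respect to the graph-type topology of $\mathrm{Dom}(D)\cap\mathrm{Dom}(B)$ rather than merely in $\mathfrak J_2(\Qb;\Pb)$, and one must check that the passage from $T$ to $T^\prime$ is governed purely by the Hilbert--Schmidt size of $Q^\prime$, with no additional shrinkage forced by $D$ or $B$.
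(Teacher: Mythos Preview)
Your proposal is correct and follows essentially the same route as the paper: use continuity of $Q^\prime$ with $Q^\prime(0)=O$ to obtain $\|Q^\prime(t)\|_{\mathfrak J_2}<1$ on a short interval, invoke the Neumann series to invert $\id+Q^\prime$, and set $G=P(\id+Q^\prime)^{-1}$, noting that since $D$ and $B$ act on the left the graph norm satisfies $\|P(Q^\prime)^n\|_{\mathrm{Dom}(D)\cap\mathrm{Dom}(B)}\leqslant\|P\|_{\mathrm{Dom}(D)\cap\mathrm{Dom}(B)}\|Q^\prime\|_{\mathrm{op}}^n$. The only cosmetic difference is that the paper establishes $\mathrm{det}_2(\id+Q^\prime)\neq0$ by a direct quantitative estimate $\bigl|\mathrm{det}_2(\id+Q^\prime)-1\bigr|\leqslant\exp\bigl(\sum_{\ell\geqslant2}\ell^{-1}\|Q^\prime\|_{\mathfrak J_2}^\ell\bigr)-1$, whereas you deduce it from invertibility via the cited equivalence; your treatment of the $C^\infty$-in-$t$ regularity is actually more explicit than the paper's.
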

\begin{proof}
Since $Q^\prime\in C^{\infty}\bigl([0,T];\mathfrak J_2(\Qb;\Qb)\bigr)$
and $Q^\prime(0)=O$, by continuity there exists a $T^\prime>0$ 
with $T^\prime\leqslant T$ such that $\|Q^\prime(t)\|_{\mathfrak J_2(\Qb;\Qb)}<1$ 
for $t\in[0,T^\prime]$. Similarly by continuity, since $Q^\prime(0)=O$,
for a short time at least we expect $\mathrm{det}_2\bigl(\id+Q^\prime\bigr)\neq0$.
We can however assess this as follows. Using the 
regularized Fredholm determinant formula above, we observe that
\begin{equation*}
\Bigl|\mathrm{det}_2\bigl(\id+Q^\prime\bigr)-1\Bigr|
\leqslant \sum_{n\geqslant 1}\frac{1}{n!}
\biggl(\sum_{\ell\geqslant2}\frac{1}{\ell}\mathrm{tr}\,\bigl|Q^\prime\bigr|^\ell\biggr)
\leqslant\exp\biggl(\sum_{\ell\geqslant2}\frac{1}{\ell}\|Q^\prime\|_{\mathfrak J_2(\Qb;\Qb)}^\ell\biggr)-1.
\end{equation*}
In the last step we used that 
$\mathrm{tr}\,\bigl|Q^\prime\bigr|^\ell\leqslant 
\bigl(\mathrm{tr}\,\bigl|Q^\prime\bigr|^2\bigr)^{\ell/2}$ 
for all $\ell\geqslant2$.
The series in the exponent in the final term above converges if 
$\|Q^\prime\|_{\mathfrak J_2(\Qb;\Qb)}<1$. We deduce that provided 
$\|Q^\prime\|_{\mathfrak J_2(\Qb;\Qb)}$ is sufficiently small then 
its regularized Fredholm determinant is bounded away from zero.
By continuity there exists a $T^\prime$, possibly smaller than the 
choice above, such that for all $t\in[0,T^\prime]$ we know  
$\|Q^\prime(t)\|_{\mathfrak J_2(\Qb;\Qb)}$ 
is sufficiently small and the determinant is bounded away from zero.

Next, we set $\|H\|_{\mathrm{Dom}(D)\cap\mathrm{Dom}(B)}
\coloneqq\|DH\|_{\mathfrak J_2(\Qb;\Pb)}+\|BH\|_{\mathfrak J_2(\Qb;\Pb)}$
for any $H\in\mathrm{Dom}(D)\cap\mathrm{Dom}(B)$,
while $\|\,\cdot\,\|_{\mathrm{op}}$ denotes the operator norm for
bounded operators on $\Qb$. We observe that for any $n\in\mathbb N$ we have  
\begin{align*}
\Bigl\|P(t)\bigl(Q^\prime(t)\bigr)^n\Bigr\|_{\mathrm{Dom}(D)\cap\mathrm{Dom}(B)}
&\leqslant\|P(t)\|_{\mathrm{Dom}(D)\cap\mathrm{Dom}(B)}
\Bigl\|\bigl(Q^\prime(t)\bigr)^n\Bigr\|_{\mathrm{op}}\\
&\leqslant\|P(t)\|_{\mathrm{Dom}(D)\cap\mathrm{Dom}(B)}
\|Q^\prime(t)\|^n_{\mathrm{op}}\\
&\leqslant\|P(t)\|_{\mathrm{Dom}(D)\cap\mathrm{Dom}(B)}
\|Q^\prime(t)\|^n_{\mathfrak J_2(\Qb;\Qb)}.
\end{align*}
Hence we observe that 
\begin{align*}
\biggl\|P(t)\biggl(\id+\sum_{n\geqslant1}(-1)^n
&\bigl(Q^\prime(t)\bigr)^n\biggr)\biggr\|_{\mathrm{Dom}(D)\cap\mathrm{Dom}(B)}\\
&\leqslant
\|P(t)\|_{\mathrm{Dom}(D)\cap\mathrm{Dom}(B)}\biggl(1
+\sum_{n\geqslant1}\|Q^\prime(t)\|^n_{\mathfrak J_2(\Qb;\Qb)}\biggr)\\
&\leqslant
\|P(t)\|_{\mathrm{Dom}(D)\cap\mathrm{Dom}(B)}\bigl(1-\|Q^\prime(t)\|_{\mathfrak J_2(\Qb;\Qb)}\bigr)^{-1}.
\end{align*}
Hence using the operator series expansion for $(\id+Q^\prime(t))^{-1}$ 
we observe we have established that 
\begin{equation*}
\Bigl\|P(t)\bigl(\id+Q^\prime(t)\bigr)^{-1}\Bigr\|_{\mathrm{Dom}(D)\cap\mathrm{Dom}(B)}
\leqslant
\|P(t)\|_{\mathrm{Dom}(D)\cap\mathrm{Dom}(B)}\bigl(1-\|Q^\prime(t)\|_{\mathfrak J_2(\Qb;\Qb)}\bigr)^{-1}.
\end{equation*}
Hence there exists a $T^\prime>0$ such that 
for each $t\in[0,T^\prime]$ we know $G(t)=P(t)\bigl(\id+Q^\prime(t)\bigr)^{-1}$
exists, is unique, and in fact 
$G\in C^{\infty}\bigl([0,T^\prime];\mathrm{Dom}(D)\cap\mathrm{Dom}(B)\bigr)$. \qed
\end{proof}
\begin{remark}[Initial data]
We have already remarked that we set $Q^\prime(0)=O$ so that $Q(0)=\id$.
Consistent with the Riccati relation we hereafter set $P(0)=G(0)$. 
\end{remark}
Our first main result in this section is as follows.
\begin{theorem}[Quadratic Degree Evolution Equation]\label{theorem:quadmain}
Given initial data $G_0\in\mathrm{Dom}(D)\cap\mathrm{Dom}(B)$
we set $Q^\prime(0)=O$ and $P(0)=G_0$.
Suppose there exists a $T>0$ such that the linear operators 
$P\in C^{\infty}\bigl([0,T];\mathrm{Dom}(D)\cap\mathrm{Dom}(B)\bigr)$ and
$Q^\prime\in C^{\infty}\bigl([0,T];\mathfrak J_2(\Qb;\Qb)\bigr)$ satisfy the
linear base and auxiliary equations. We choose $T>0$ so that for 
$t\in[0,T]$ we have $\mathrm{det}_2\bigl(\id+Q^\prime(t)\bigr)\neq0$ and
$\|Q^\prime(t)\|_{\mathfrak J_2(\Qb;\Qb)}<1$.
Then there exists a unique solution 
$G\in C^{\infty}\bigl([0,T];\mathrm{Dom}(D)\cap\mathrm{Dom}(B)\bigr)$
to the Riccati relation which necessarily satisfies $G(0)=G_0$ 
and the Riccati evolution equation
\begin{equation*}
\pa_tG=C+DG-G\,(A+BG). 
\end{equation*}
\end{theorem}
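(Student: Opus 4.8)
The plan is to verify the Riccati evolution equation by differentiating the Riccati relation $P=GQ$ with respect to $t$, substituting the base and auxiliary equations, and then post-composing by $Q^{-1}$. By Lemma~\ref{lemma:EandU}, for $t\in[0,T]$ the operator $Q=\id+Q^\prime$ is invertible with $Q^{-1}\in C^\infty\bigl([0,T];\mathfrak J_2(\Qb;\Qb)\bigr)$ in the appropriate sense, and the unique solution of the Riccati relation is $G=PQ^{-1}$, which lies in $C^\infty\bigl([0,T];\mathrm{Dom}(D)\cap\mathrm{Dom}(B)\bigr)$; so $G$ is well-defined and smooth in $t$, and the manipulations below are justified.

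First I would record that $G=PQ^{-1}$, hence $P=GQ$, and differentiate: $\pa_tP=(\pa_tG)Q+G(\pa_tQ)$. Using the base equation $\pa_tP=CQ+DP$ and the auxiliary equation $\pa_tQ=AQ+BP$, and then inserting $P=GQ$ on the right-hand sides, we obtain
\begin{equation*}
CQ+DGQ=(\pa_tG)Q+G(AQ+BGQ).
\end{equation*}
Every term now has $Q$ as a right factor. Post-composing by $Q^{-1}$ on the right yields
\begin{equation*}
\pa_tG=C+DG-G(A+BG),
\end{equation*}
which is the claimed equation. The initial condition $G(0)=G_0$ follows from $Q^\prime(0)=O$, i.e.\ $Q(0)=\id$, together with $P(0)=G_0$: indeed $G(0)=P(0)Q(0)^{-1}=G_0$.

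The main obstacle is bookkeeping about domains and unboundedness rather than any deep idea: one must check that each term in the differentiated identity genuinely lies in the space where the cancellation is taking place, so that post-composing by $Q^{-1}$ is legitimate. Concretely, $DP$ and $CQ$ lie in $\mathfrak J_2(\Qb;\Pb)$, $BP$ lies in $\mathfrak J_2(\Qb;\Qb)$, and $A\in\mathfrak J_2(\Qb;\Qb)$ is bounded, so $AQ$, $BGQ$ make sense; the hypothesis $G\in C^\infty\bigl([0,T];\mathrm{Dom}(D)\cap\mathrm{Dom}(B)\bigr)$ guarantees $DG$ and $BG$ are Hilbert--Schmidt-valued. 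Since $Q^{-1}=\id-Q^\prime+(Q^\prime)^2-\cdots$ with $Q^\prime$ Hilbert--Schmidt and $\|Q^\prime\|_{\mathfrak J_2}<1$, right multiplication by $Q^{-1}$ preserves each relevant operator ideal (as in the estimates in the proof of Lemma~\ref{lemma:EandU}), so the final equation is an identity in $\mathrm{Dom}(D)\cap\mathrm{Dom}(B)$ for every $t\in[0,T]$, and smoothness in $t$ is inherited from that of $P$, $Q^\prime$ and $Q^{-1}$. The uniqueness of $G$ is already supplied by Lemma~\ref{lemma:EandU}, so nothing further is needed there.
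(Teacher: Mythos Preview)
Your proof is correct and follows essentially the same route as the paper's: differentiate the Riccati relation $P=GQ$, substitute the base and auxiliary equations, feed $P=GQ$ back in, and post-compose by $Q^{-1}$. You supply more detail than the paper does on the domain bookkeeping and the verification of $G(0)=G_0$, but the underlying argument is identical.
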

\begin{proof}
By direct computation, differentiating the Riccati relation $P=G\,Q$ 
with respect to time using the product rule, using the base and 
auxiliary equations and feeding back through the Riccati relation, 
we find
$\bigl(\pa_tG\bigr)Q=\pa_tP-G\,\pa_tQ=(C+DG)\,Q-\bigl(G\,(A+BG)\bigr)\,Q$. 
Equivalencing with respect to $Q$, i.e.\/ postcomposing by $Q^{-1}$,
establishes the result.\qed
\end{proof}
\begin{remark}
We assume throughout this paper that $C=C(t)$ is a bounded operator,
indeed that $C\in C^\infty\bigl([0,T];\mathfrak J_2(\Qb;\Pb)\bigr)$. In fact
in every application in \S\ref{sec:examples} we take $C=O$. However
in general $C=C(t)$ would represent some non-homogeneous forcing
in the Riccati equation satisfied by $G=G(t)$. Further, in 
Doikou, Malham \& Wiese~\cite{DMW} we apply our methods here
to stochastic partial differential equations. One example therein
features additive space-time white noise. In that
case the term $C=C(t)$ represents the non-homogenous space-time 
white noise forcing term and we must thus allow for $C=C(t)$ 
to be an unbounded operator. 
\end{remark}
We now turn our attention to applications of Theorem~\ref{theorem:quadmain}
above and demonstrate how to find solutions to a large class of partial
differential systems with nonlocal quadratic nonlinearities. Guided by
our results above, we now suppose the classes of operators we have considered 
thusfar to be those with integral kernels on $\R\times\R$. For $x,y\in\R$ and 
$t\geqslant0$, suppose the functions $p=p(x,y;t)$ and $q^\prime=q^\prime(x,y;t)$ 
are matrix valued, with $p\in\R^{n^\prime\times n}$ and $q^\prime\in\R^{n\times n}$
for some $n,n^\prime\in\mathbb N$, and they satisfy the linear base 
and auxiliary equations
\begin{equation*}
\pa_t p(x,y;t)=d(\partial_1)\,p(x,y;t)
\qquad\text{and}\qquad
\pa_t q^\prime(x,y;t)=b(x)\,p(x,y;t).
\end{equation*}
Here the unbounded operator $d=d(\pa_1)$ is a constant coefficient 
scalar polynomial function of the partial differential operator with respect to
the first component $\pa_1$, while $b=b(x)$ is a smooth 
bounded square-integrable $\R^{n\times n^\prime}$-valued function of $x\in\R$. 
We can explicitly solve these equations for $p=p(x,y;t)$ and 
$q^\prime=q^\prime(x,y;t)$ in terms of their Fourier transforms as follows.
Note we use the following notation for the Fourier transform of any 
function $f=f(x,y)$ and its inverse: 
\begin{align*}
\widehat{f}(k,\kappa)&\coloneqq\int_{\R^2} f(x,y)\mathrm{e}^{2\pi\mathrm{i}(kx+\kappa y)}\,\rd x\,\rd y
\intertext{and}
f(x,y)&\coloneqq\int_{\R^2}\widehat{f}(k,\kappa)\mathrm{e}^{-2\pi\mathrm{i}(kx+\kappa y)}\,\rd k\,\rd\kappa.
\end{align*}
\begin{lemma}\label{lemma:explicitlinearquadratic}
Let $\widehat{p}=\widehat{p}(k,\kappa;t)$ and 
$\widehat{q}^\prime=\widehat{q}^\prime(k,\kappa;t)$ denote the
two-dimensional Fourier transforms of the solutions to the 
linear base and auxiliary equations just above.
Assume that $q^\prime(x,y;0)\equiv0$ and $p(x,y;0)=p_0(x,y)$.
Then for all $t\geqslant0$ the functions $\widehat{p}$ and 
$\widehat{q}^\prime$ are explicitly given by
\begin{align*}
\widehat{p}(k,\kappa;t)&=\exp\bigl(d(2\pi\mathrm{i}k)\,t\bigr)\,\widehat{p}_0(k,\kappa)
\intertext{and}
\widehat{q}^\prime(k,\kappa;t)
&=\int_\R\widehat{b}(k-\lambda)\,\widehat{I}(\lambda;t)
\,\widehat{p}_0(\lambda,\kappa)\,\rd\lambda,
\end{align*}
where $\widehat{I}(k;t)
\coloneqq\bigl(\exp\bigl(d(2\pi\mathrm{i}k)\,t\bigr)-1\bigr)/d(2\pi\mathrm{i}k)$
and indeed $q^\prime(x,y;t)=b(x)\int_\R I(x-z,t)\, p_0(z,y)\,\rd z$.
\end{lemma}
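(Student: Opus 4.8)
The statement is essentially a verification computation, so the plan is to diagonalize the base equation with the two-dimensional Fourier transform, solve the resulting family of scalar linear ODEs in $t$, and then feed the answer into the auxiliary equation, using that multiplication by $b(x)$ becomes convolution in the first Fourier variable.

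First I would Fourier transform the base equation $\pa_t p=d(\pa_1)p$ in $(x,y)$. Since $d(\pa_1)$ is a constant-coefficient polynomial in $\pa_x$ alone, differentiation in $x$ passes to multiplication by a constant multiple of $k$ in the dual variable, so $d(\pa_1)$ is converted into multiplication by the scalar $d(2\pi\mathrm{i}k)$. Hence for each fixed $(k,\kappa)$ the transform $\widehat{p}(k,\kappa;\cdot)$ solves the linear ODE (entrywise) $\pa_t\widehat{p}=d(2\pi\mathrm{i}k)\,\widehat{p}$ with data $\widehat{p}_0(k,\kappa)$; as the coefficient is scalar this integrates at once to $\widehat{p}(k,\kappa;t)=\exp\bigl(d(2\pi\mathrm{i}k)t\bigr)\widehat{p}_0(k,\kappa)$, which is the first claimed formula.

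Next I would transform the auxiliary equation $\pa_t q^\prime=b(x)\,p$. The transform in $y$ passes through $b(x)$, while the transform in $x$ of the product $b(x)\,p(x,y;t)$ is the convolution $\int_\R\widehat{b}(k-\lambda)\,\widehat{p}(\lambda,\kappa;t)\,\rd\lambda$, with $\widehat{b}$ the one-dimensional transform of $b$. Inserting the formula for $\widehat{p}$ and integrating in time from $0$, using $\widehat{q}^\prime(k,\kappa;0)=0$ and Fubini's theorem, gives
\begin{equation*}
\widehat{q}^\prime(k,\kappa;t)=\int_\R\widehat{b}(k-\lambda)\biggl(\int_0^t\exp\bigl(d(2\pi\mathrm{i}\lambda)s\bigr)\,\rd s\biggr)\widehat{p}_0(\lambda,\kappa)\,\rd\lambda ,
\end{equation*}
and the inner time integral is precisely $\widehat{I}(\lambda;t)=\bigl(\exp(d(2\pi\mathrm{i}\lambda)t)-1\bigr)/d(2\pi\mathrm{i}\lambda)$. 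For the final assertion I would instead integrate $\pa_t q^\prime=b(x)\,p$ directly in physical space to get $q^\prime(x,y;t)=b(x)\int_0^t p(x,y;s)\,\rd s$, and observe that $\int_0^t p(x,y;s)\,\rd s$ has Fourier transform $\widehat{I}(k;t)\,\widehat{p}_0(k,\kappa)$, i.e.\/ it equals the convolution $\int_\R I(x-z,t)\,p_0(z,y)\,\rd z$ in the first variable; this yields $q^\prime(x,y;t)=b(x)\int_\R I(x-z,t)\,p_0(z,y)\,\rd z$.

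I do not expect a substantial obstacle here; the only care needed is bookkeeping. One must justify the interchange of the $s$- and $\lambda$-integrations, which follows from the smoothness and square-integrability of $b$ together with the assumed regularity and decay of $p_0$ (ensuring the integrand is absolutely integrable on the relevant time interval, and implicitly that $\exp(d(2\pi\mathrm{i}k)t)$ does not spoil integrability against $\widehat{p}_0$). One should also record that $\widehat{I}(\,\cdot\,;t)$, although written as a quotient, is genuinely defined for every $k$: it has only removable singularities at the zeros of $d(2\pi\mathrm{i}k)$, where its value is $t$, so it is in fact smooth in $k$. Everything else is the standard Fourier calculus of constant-coefficient linear evolution equations.
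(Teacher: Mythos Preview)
Your proposal is correct and follows essentially the same route as the paper: Fourier transform the base equation to a scalar ODE in $t$, solve it, then Fourier transform the auxiliary equation (turning multiplication by $b(x)$ into convolution in the first variable), substitute, and integrate in time from the zero initial datum. Your additional remarks on Fubini, the removable singularity of $\widehat{I}$ at zeros of $d(2\pi\mathrm{i}k)$, and the direct physical-space derivation of the formula for $q^\prime$ go slightly beyond what the paper spells out but are entirely in keeping with its argument.
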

\begin{proof}
Taking the two-dimensional Fourier transform of the base equation 
we generate the decoupled equation 
$\pa_t\widehat{p}(k,\kappa;t)=d(2\pi\mathrm{i}k)\widehat{p}(k,\kappa;t)$
whose solution is the form for $\widehat{p}(k,\kappa;t)$ shown.
Then take the Fourier transform of the 
auxiliary equation to generate the equation 
$\pa_t\widehat{q}^\prime(k,\kappa;t)
=\int_\R\widehat{b}(k-\lambda)\,\widehat{p}(\lambda,\kappa;t)\,\rd\lambda$. 
Substituting in the explicit form for $\widehat{p}=\widehat{p}(k,\kappa;t)$
and integrating with respect to time, using 
$\widehat{q}^\prime(k,\kappa;0)=0$, generates the form for 
$\widehat{q}^\prime=\widehat{q}^\prime(k,\kappa;t)$ shown.\qed
\end{proof}
\begin{remark}[Hilbert--Schmidt solutions]\label{remark:HS}
We suppose here the separable Hilbert space 
$\mathbb H=L^2(\mathbb R;\mathbb R^n)\times(\mathrm{Dom}(D)\cap\mathrm{Dom}(B))$ 
with
$\mathrm{Dom}(D)\cap\mathrm{Dom}(B)\subseteq L^2(\mathbb R;\mathbb R^{n^\prime})$
where $n$ and $n^\prime$ are the 
dimensions above. Then $\mathbb P$ and $\mathbb Q$ are closed 
subspaces in the direct sum decomposition $\mathbb H=\mathbb Q\oplus\mathbb P$;
see Beck \textit{et al.\/} ~\cite{BDMS}.
The functions in $\Qb$ are $\R^n$-valued while those in $\Pb$ are $\R^{n^\prime}$-valued.
By standard theory, $Q^\prime(t)\in\mathfrak J_2(\Qb;\Qb)$ 
and $P(t)\in\mathfrak J_2(\Qb;\Pb)$ if and only if there exist kernel 
functions $q^\prime(\cdot,\cdot;t)\in L^2(\R^2;\R^{n\times n})$ and 
$p(\cdot,\cdot;t)\in L^2(\R^2;\R^{n^\prime\times n})$ with
the action of $Q^\prime(t)$ and $P(t)$ given through $q^\prime$ and $p$, 
respectively. Further we know that 
$\|Q^\prime(t)\|_{\mathfrak J_2(\Qb;\Qb)}=\|q^\prime(\cdot,\cdot;t)\|_{L^2(\R^2;\R^{n\times n})}$ 
and $\|P(t)\|_{\mathfrak J_2(\Qb;\Pb)}=\|p(\cdot,\cdot;t)\|_{L^2(\R^2;\R^{n^\prime\times n})}$.
For more details see for example Reed \& Simon~\cite[p.~210]{RS} or Karambal \& Malham~\cite{KM}.
The linear base and auxiliary equations above correspond to the 
case when $A=C=O$, $D=d(\pa_1)$ and $B$ is given by the bounded multiplicative
operator $b=b(x)$. Recall that in our ``abstract'' formulation above we required
that $P\in C^\infty\bigl([0,T];\mathrm{Dom}(D)\cap\mathrm{Dom}(B)\bigr)$. 
The explicit form for $p=p(x,y;t)$ given in 
Lemma~\ref{lemma:explicitlinearquadratic} reveals that 
$P$ will only have this property for certain classes of operators $d=d(\pa_1)$.
For example suppose $d=d(\pa_1)$ is diffusive so that it takes the form of
a polynomial with only even degree terms in $\pa_1$ and the real scalar coefficient
of the degree $2N$ term is of the form $(-1)^{N+1}\alpha_{2N}$. In this case 
the exponential term $\exp\bigl(d(2\pi\mathrm{i}k)\,t\bigr)$ decays exponentially 
for all $t>0$. We could also include dispersive forms for $d$. For example 
$d=\pa_1^3$, for which the exponential term $\exp\bigl(d(2\pi\mathrm{i}k)\,t\bigr)$ 
remains bounded for all $t>0$. We also note that for such diffusive or dispersive
forms for $d=d(\pa_1)$ the integral kernel function $p=p(x,y;t)$ is in fact
smooth. 
Also recall from our ``abstract'' formulation 
we require $Q^\prime\in C^\infty\bigl([0,t];\mathfrak J_2(\Qb;\Qb)\bigr)$.
The explicit form for $q^\prime=q^\prime(x,y;t)$ given in 
Lemma~\ref{lemma:explicitlinearquadratic} reveals that 
its time dependence is characterized through the term $\widehat{I}(k;t)$.
For the diffusive or dispersive forms for $d=d(\pa_1)$ just discussed
we observe that $\widehat{I}(k;t)\to-1/d(2\pi\mathrm{i}k)$ for all $k\neq0$
while for the singular value $k=0$ the term $\widehat{I}(0;t)$ grows 
linearly in time. Thus in such cases, while we know that for some
time $T>0$ for $t\in[0,T]$ we have
$\|Q^\prime(t)\|_{\mathfrak J_2(\Qb;\Qb)}=\|q^\prime(\cdot,\cdot;t)\|_{L^2(\R^2;\R^{n\times n})}
=\|\widehat{q}^\prime(\cdot,\cdot;t)\|_{L^2(\R^2;\C^{n\times n})}$ is bounded,
we also have 
\begin{align*}
\|&\widehat{q}^\prime(\cdot,\cdot;t)\|_{L^2(\R^2;\C^{n\times n})}\\
&=\int_{\R^4}\widehat{p}_0^\ast(\lambda,\kappa)\,\widehat{I}^\ast(\lambda;t)\,
\widehat{b}^\ast(k-\lambda)\,\widehat{b}(k-\nu)\,\widehat{I}(\nu;t)
\,\widehat{p}_0(\nu,\kappa)\,\rd\lambda\,\rd\nu\,\rd\kappa\,\rd k\\
&\leqslant
\biggl\|\int_\R\widehat{b}_0^\ast(k-\cdot)\,
\widehat{b}_0(k-\cdot)\,\rd k\biggr\|_{L^\infty(\R^2;\R^{n\times n})}\cdot\\
&\qquad\qquad\qquad\qquad\cdot\biggl\|\int_\R\widehat{p}_0^\ast(\cdot,\kappa)\,
\widehat{p}_0(\cdot,\kappa)\,\rd\kappa\biggr\|_{L^\infty(\R^2;\R^{n\times n})}
\bigl\|\widehat{I}(t)\bigr\|_{L^1(\R;\C)}^2.
\end{align*}
Hence provided the terms on the right are bounded with 
$\bigl\|\widehat{I}(t)\bigr\|_{L^1(\R;\C)}$
bounded for all $t>0$, then 
$\|\widehat{q}^\prime(\cdot,\cdot;t)\|_{L^2(\R^2;\C^{n\times n})}$
will be bounded for all $t>0$, and indeed smooth. 
However how far the interval of time on which 
$\mathrm{det}_2\bigl(\id+Q^\prime(t)\bigr)\neq0$ and
$\|Q^\prime(t)\|_{\mathfrak J_2(\Qb;\Qb)}<1$ extends, for now,
we treat on case by case basis.
\end{remark}
\begin{corollary}[Evolutionary PDEs with quadratic nonlocal nonlinearities]
Given initial data 
$g_0\in C^\infty(\R^2;\R^{n^\prime\times n})\cap L^2(\R^2;\R^{n^\prime\times n})$
for some $n,n^\prime\in\mathbb N$, suppose $p=p(x,y;t)$ and $q^\prime=q^\prime(x,y;t)$
are the solutions to the linear base and auxiliary equations
from Lemma~\ref{lemma:explicitlinearquadratic} for which $p_0\equiv g_0$ 
and $q^\prime(x,y;0)\equiv0$. Let $\mathrm{Dom}(d)$ denote the domain
of the operator $d=d(\pa_1)$ and suppose it is of the diffusive or
dispersive form described in Remark~\ref{remark:HS}.
Then there exists a $T>0$ such that the solution 
$g\in C^\infty\bigl([0,T];\mathrm{Dom}(d)\cap L^2(\R^2;\R^{n^\prime\times n})\bigr)$ 
to the linear Fredholm equation
\begin{equation*}
p(x,y;t)=g(x,y;t)+\int_{\R}g(x,z;t)\,q^\prime(z,y;t)\,\rd z
\end{equation*}
solves the evolutionary partial differential equation with 
quadratic nonlocal nonlinearities of the form
\begin{equation*}
\pa_tg(x,y;t)=d(\pa_x)\,g(x,y;t)-\int_{\R}g(x,z;t)\,b(z)\,g(z,y;t)\,\rd z.
\end{equation*}
\end{corollary}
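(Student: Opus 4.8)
The plan is to specialize the ``abstract'' operator-valued machinery to the concrete integral-kernel setting and then translate the resulting operator identities into the stated partial differential equation. First I would invoke Remark~\ref{remark:HS} to identify the operators: we take $A=C=O$, $D=d(\pa_1)$ acting on kernels in the first variable, and $B$ the bounded multiplication operator by $b=b(x)$. With $p_0=g_0$ and $q^\prime(\cdot,\cdot;0)=O$, Lemma~\ref{lemma:explicitlinearquadratic} furnishes explicit $p=p(x,y;t)$ and $q^\prime=q^\prime(x,y;t)$ solving the linear base and auxiliary equations, and the diffusive or dispersive hypothesis on $d$ guarantees (as spelled out in Remark~\ref{remark:HS}) that $P\in C^\infty([0,T];\mathrm{Dom}(D)\cap\mathrm{Dom}(B))$ and $Q^\prime\in C^\infty([0,T];\mathfrak J_2(\Qb;\Qb))$ with $p$ and $q^\prime$ smooth in the spatial variables. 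The linear Fredholm equation $p(x,y;t)=g(x,y;t)+\int_\R g(x,z;t)q^\prime(z,y;t)\,\rd z$ is precisely the Riccati relation $P=G\,Q$ written out in terms of kernels, since $Q=\id+Q^\prime$ corresponds to $\delta(x-y)+q^\prime(x,y;t)$ under the $\star$ product.

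Next I would apply Lemma~\ref{lemma:EandU} and Theorem~\ref{theorem:quadmain}. By Lemma~\ref{lemma:EandU} there is a $T>0$ (possibly shrunk from the one above) on which $\mathrm{det}_2(\id+Q^\prime(t))\neq0$ and $\|Q^\prime(t)\|_{\mathfrak J_2(\Qb;\Qb)}<1$, so the Fredholm equation has a unique solution $G\in C^\infty([0,T];\mathrm{Dom}(D)\cap\mathrm{Dom}(B))$ with $G(0)=G_0=g_0$. Theorem~\ref{theorem:quadmain} then tells us $G=G(t)$ satisfies $\pa_tG=C+DG-G(A+BG)$, which with the present choice of operators reads $\pa_tG=DG-G\,(BG)$, that is $\pa_tg=d(\pa_1)g-g\star(bg)$. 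Writing the operator $B$ as multiplication by $b=b(z)$ inside the $\star$ product, this is exactly $\pa_tg(x,y;t)=d(\pa_x)g(x,y;t)-\int_\R g(x,z;t)\,b(z)\,g(z,y;t)\,\rd z$. The identification $G(0)=g_0$ follows from the initial-data convention $P(0)=G_0$ together with $Q(0)=\id$, which makes the Riccati relation at $t=0$ read $g_0=G(0)$.

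The remaining point, and the one needing a little care rather than the main obstacle, is the regularity claim $g\in C^\infty([0,T];\mathrm{Dom}(d)\cap L^2(\R^2;\R^{n^\prime\times n}))$: the abstract theorem gives $G\in\mathrm{Dom}(D)\cap\mathrm{Dom}(B)$, and since $D=d(\pa_1)$ we have $\mathrm{Dom}(D)=\mathrm{Dom}(d)$, while membership in $L^2(\R^2;\R^{n^\prime\times n})$ is immediate from $G\in\mathfrak J_2$ via the kernel-norm correspondence recalled in Remark~\ref{remark:HS}; smoothness in the spatial variables propagates from the smoothness of $p$ and $q^\prime$ through the Fredholm solve $G=P(\id+Q^\prime)^{-1}$, using that $(\id+Q^\prime)^{-1}$ preserves these kernel classes on the interval where $\|Q^\prime\|<1$. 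The genuinely delicate ingredient, already flagged in Remark~\ref{remark:HS}, is establishing how large $T$ can be taken so that the Fredholm determinant stays nonzero and $\|Q^\prime(t)\|_{\mathfrak J_2}<1$ --- but for the corollary as stated it suffices to assert existence of some such $T>0$, which Lemma~\ref{lemma:EandU} already provides, so no new work beyond assembling the pieces is required.
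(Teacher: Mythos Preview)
Your proposal is correct and follows essentially the same approach as the paper: the paper's proof simply invokes Lemma~\ref{lemma:EandU} together with Remark~\ref{remark:HS} for the existence of $g$ in the stated class, identifies $g$ as the integral kernel of $G$, and then cites Theorem~\ref{theorem:quadmain} to conclude that $g$ solves the nonlocal PDE. Your write-up spells out these steps (the operator identifications $A=C=O$, $D=d(\pa_1)$, $B=b(x)$, the kernel interpretation of the Riccati relation, and the regularity bookkeeping) in somewhat more detail than the paper, but the logical skeleton is identical.
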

\begin{proof}
That for some $T>0$ there exists a solution 
$g\in C^\infty\bigl([0,T];\mathrm{Dom}(d)\cap L^2(\R^2;\R^{n^\prime\times n})\bigr)$ 
to the linear Fredholm equation (Riccati relation) shown is a consequence 
of Lemma~\ref{lemma:EandU} and Remark~\ref{remark:HS}. The solution $g$
is the integral kernel of $G$. That this solution $g$ to the Riccati relation
solves the evolutionary partial differential equation with the quadratic 
nonlocal nonlinearity shown is a direct consequence of the Quadratic Degree
Evolution Equation Theorem~\ref{theorem:quadmain}. \qed
\end{proof}
\begin{remark}\label{remark:directview}
We can also now think of this result in the following way.
First differentiate the above linear Fredholm equation in 
the Corollary with respect to time using the product rule, and  
use that $p$ and $q^\prime$ satisfy the linear base 
and auxiliary equations so that
\begin{multline*}
\pa_tg(x,y;t)+\int_{\R}\pa_tg(x,z;t)\,q^\prime(z,y;t)\,\rd z\\
=d(\pa_1)p(x,y;t)-\int_{\R}g(x,z;t)\,b(z)p(z,y;t)\,\rd z.
\end{multline*}
Second replacing all instances of $p$ using the linear Fredholm
equation above and swapping integration labels we obtain
\begin{align*}
\pa_tg(x,y;t)+&\int_{\R}\pa_tg(x,z;t)\,q^\prime(z,y;t)\,\rd z\\
=&\;d(\pa_x)g(x,y;t)+\int_{\R}d(\pa_x)g(x,z;t)\,q^\prime(z,y;t)\,\rd z\\
&\;-\int_{\R}g(x,z;t)\,b(z)g(z,y;t)\,\rd z\\
&\;-\int_{\R}\biggl(\int_{\R}g(x,\zeta;t)\,b(\zeta)g(\zeta,z)\,\rd\zeta\biggr)
q^\prime(z,y;t)\,\rd z.
\end{align*}
We can express this in the form
\begin{multline*}
\int_{\R}\biggl(\pa_tg(x,z;t)-d(\pa_x)g(x,z;t)\\
+\int_{\R}g(x,\zeta;t)\,b(\zeta)g(\zeta,z;t)\,\rd\zeta\biggr)
\bigl(\delta(z-y)+q^\prime(z,y;t)\bigr)\,\rd z=0.
\end{multline*}
Third we postmultiply by `$\delta(y-\eta)+\tilde{q}^\prime(y,\eta;t)$'
for some $\eta\in\R$. This is the kernel corresponding to the inverse operator 
$\id+\tilde Q^\prime$ of $\id+Q^\prime$. Integrating over $y\in\R$ gives the result
for $g=g(x,\eta;t)$. This derivation follows that in  
Beck \textit{et al.\/}~\cite{BDMS} 
for scalar partial differential equations. 
\end{remark}
\begin{remark}
Some observations are as follows:
(i) \emph{Nonlocal nonlinearities with derivatives:}
Starting with the linear base and auxiliary equations
for $p=p(x,y;t)$ and $q^\prime=q^\prime(x,y;t)$, we could have 
taken $b$ to be any constant coefficient polynomial of $\pa_1$. 
With minor modifications, all of the main arguments above still apply.
Our explicit solution for $q^\prime=q^\prime(x,y;t)$ will be slightly
more involved. One of our examples in \S\ref{sec:examples} is
the nonlocal Korteweg de Vries equation for which $b=\pa_1$; 
(ii) \emph{Smooth solutions:} All derivatives are with
respect to the first parameter $x$. Differentiating the Riccati relation gives
$\pa_xp(x,y;t)=\pa_xg(x,y;t)+\int_{\R}\pa_xg(x,z;t)\,q^\prime(z,y;t)\,\rd z$.
Hence the regularity of the solution $g$ is directly determined by
the regularity of the solution of the base equation $p$ for the time
the Riccati relation is solvable, in particular while  
$\mathrm{det}_2\bigl(\id+Q^\prime(t)\bigr)\neq0$ and
$\|Q^\prime(t)\|_{\mathfrak J_2(\Qb;\Qb)}<1$. Hence if 
$p$ is smooth on this interval, then the solution $g$ is smooth
on this interval;
(iii) \emph{Time as a parameter:} Importantly, when we can explicitly 
solve for $p=p(x,y;t)$ and $q^\prime=q^\prime(x,y;t)$, as we do above, 
then time $t$ plays the role of a parameter. We choose the
time at which we wish to compute the solution and we solve the 
linear Fredholm equation to generate the solution $g$ for that time $t$;   
(iv) \emph{Non-homogeneous coefficients:} 
In principle, if $d$ and $b$ are polynomials of $\pa_x$, 
the coefficients in these polynomial could also be functions of $x$.
Though we can in principle always find series solutions to the 
linear base and auxiliary equations, we would now have the issue 
as to whether we can derive explicit formulae for $p$ and $q^\prime$.
In such cases we may need to evaluate a series or 
numerically integrate in time to obtain $p$ and $q^\prime$. 
Thus we cannot compute solutions as simply as in the sense 
outlined in Item~(iii) just above. An important example is that
of evolutionary stochastic partial differential equations
with non-local nonlinearities. The presence of Wiener fields 
in such equations as non-homogeneous additive terms 
or multiplicative factors means that the base equation
must be solved numerically. For example the base equation 
might be the stochastic heat equation. See Doikou, Malham
\& Wiese~\cite{DMW} for more details;
(v) \emph{Complex valued solutions:} In general 
$g$ could be complex matrix valued; see \S\ref{sec:generalflows} next;
(vi) \emph{Domains}: If $x,y\in\mathbb I$ where $\mathbb I$ is a 
finite or semi-infinite interval on $\R$, then the above calculations
go through, see Beck~\textit{et al.\/}~\cite{BDMS} and 
also Doikou~\textit{et al.\/}~\cite{DMW} where $\mathbb I=\mathbb T$, 
the torus with period $2\pi$; and
(vii) \emph{Multi-dimensional domains:} 
If $x,y\in\R^n$ for some $n\in\mathbb{N}$ and $d=d(\Delta_1)$ 
is a polynomial function of the Laplacian acting on the first argument,
then in principle the calculations above go through; 
see our Conclusions~\S\ref{sec:conclu}.
\end{remark}

\section{Nonlocal cubic and higher odd degree nonlinearities}
\label{sec:generalflows} 
We assume the same set-up as in the first two paragraphs in
\S\ref{sec:quadflows} up to the point when we discuss
the unbounded linear operator $D$. In this section we  
assume $\Pb\subseteq\Qb$.
We still assume that $D$ is in general an unbounded, linear operator, 
however we set $B=O$ and $C=O$ while $A$ is a bounded operator which we 
discuss presently.
We assume there exists a $T>0$ such that for each $t\in[0,T]$ we have 
$P\in C^{\infty}\bigl([0,T];\mathrm{Dom}(D)\bigr)$ and 
$Q^\prime\in C^{\infty}\bigl([0,T];\mathfrak J_2(\Qb;\Qb)\bigr)$.
Our analysis in this section also involves the bounded linear operator 
$A\in\mathfrak J_2(\Qb;\Qb)$ which depends on another bounded
linear operator as follows. For a known operator $H\in\mathfrak J_2(\Qb;\Pb)$ 
we assume $A$ has the form $A=f(HH^\dag)$ where the function $f$ is given by
\begin{equation*}
f(x)=\mathrm{i}\sum_{m\geqslant0}\alpha_mx^m,
\end{equation*}
where $\mathrm{i}=\sqrt{-1}$ and the $\alpha_m$ are real coefficients. 
Note $H^\dag$ denotes the operator adjoint to $H$.
We further assume this power series expansion has an infinite radius 
of convergence. In this section we assume the evolutionary flow
of the linear operators $Q=Q(t)$ and $P=P(t)$ is as follows.
\begin{definition}[Linear Base and Auxiliary Equations (modified)]
\label{definition:odddegreebase}
We assume there exists a $T>0$ such that for the linear operators
$A$ and $D$ described above, the linear operators 
$P\in C^{\infty}\bigl([0,T];\mathrm{Dom}(D)\bigr)$ and 
$Q^\prime\in C^{\infty}\bigl([0,T];\mathfrak J_2(\Qb;\Qb)\bigr)$
satisfy the linear system of operator equations
\begin{equation*}
\pa_tP=DP,
\qquad\text{and}\qquad
\pa_tQ=f(PP^\dag)\,Q,  
\end{equation*}
where $Q=\id+Q^\prime$. We take $Q^\prime(0)=O$ at time $t=0$ 
so that $Q(0)=\id$. We call the evolution equation for $P=P(t)$
the \emph{base equation} and the evolution equation for $Q=Q(t)$ 
the \emph{auxiliary equation}.
\end{definition}
\begin{remark}
Note we first solve the base equation for 
$P\in C^{\infty}\bigl([0,T];\mathrm{Dom}(D)\bigr)$.
Then with $P$ given, we observe that $f=f(PP^\dag)$ is a
given linear operator in the auxiliary equation.
\end{remark}
\begin{lemma}
Assume for some $T>0$ that $P\in C^{\infty}\bigl([0,T];\mathrm{Dom}(D)\bigr)$ and 
$Q^\prime\in C^{\infty}\bigl([0,T];\mathfrak J_2(\Qb;\Qb)\bigr)$
satisfy the linear base and auxiliary equations above. 
Then $Q(0)=\id$ implies $QQ^\dag=\id$ for all $t\in[0,T]$. 
\end{lemma}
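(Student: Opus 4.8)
The plan is to show that $M(t):=Q(t)Q^\dag(t)$ is the constant operator $\id$; this reduces to a uniqueness statement for a linear operator-valued ODE. The structural fact that drives everything is that $A:=f(PP^\dag)$ is \emph{anti-self-adjoint}, i.e.\ $A^\dag=-A$. To establish this, observe that since $P(t)\in\mathrm{Dom}(D)\subseteq\mathfrak J_2(\Qb;\Pb)$, the operator $PP^\dag$ is bounded, self-adjoint and non-negative on $\Qb$, with $\|PP^\dag\|_{\mathrm{op}}\leqslant\|P\|_{\mathfrak J_2(\Qb;\Pb)}^2<\infty$; hence each power $(PP^\dag)^m$ is self-adjoint, and because $f(x)=\mathrm i\sum_{m\geqslant0}\alpha_mx^m$ has infinite radius of convergence with real coefficients $\alpha_m$, the series $A=\mathrm i\sum_{m\geqslant0}\alpha_m(PP^\dag)^m$ converges in operator norm and satisfies $A^\dag=-\mathrm i\sum_{m\geqslant0}\alpha_m(PP^\dag)^m=-A$. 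Since $P\in C^\infty\bigl([0,T];\mathrm{Dom}(D)\bigr)$, the operator $A=A(t)$ is bounded and norm-continuous in $t$.

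Next I would differentiate $M=QQ^\dag$ in time. Using the auxiliary equation $\pa_tQ=AQ$, the identity $\pa_t(Q^\dag)=(\pa_tQ)^\dag=Q^\dag A^\dag$ (valid because $Q=\id+Q^\prime$ with $Q^\prime$ smooth in $t$ in the Hilbert--Schmidt norm, on which the adjoint is isometric), and $A^\dag=-A$, one obtains
\begin{equation*}
\pa_tM=(\pa_tQ)Q^\dag+Q(\pa_tQ)^\dag=AQQ^\dag+QQ^\dag A^\dag=AM-MA,
\end{equation*}
together with $M(0)=Q(0)Q(0)^\dag=\id$. The constant path $M(t)\equiv\id$ solves this initial value problem, because $A\,\id-\id\,A=O$ and the initial datum matches; so it remains only to prove uniqueness.

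For uniqueness, set $M':=M-\id$. Since $Q=\id+Q^\prime$ with $Q^\prime\in C^\infty\bigl([0,T];\mathfrak J_2(\Qb;\Qb)\bigr)$, we have $M'=Q^\prime+(Q^\prime)^\dag+Q^\prime(Q^\prime)^\dag\in\mathfrak J_2(\Qb;\Qb)$, and $M'$ satisfies the homogeneous linear equation $\pa_tM'=AM'-M'A$ with $M'(0)=O$ in the Hilbert space $\mathfrak J_2(\Qb;\Qb)$. As $A(t)$ is bounded and norm-continuous, the map $X\mapsto A(t)X-XA(t)$ is a bounded linear operator on $\mathfrak J_2(\Qb;\Qb)$ with norm at most $2\|A(t)\|_{\mathrm{op}}$, uniformly bounded on $[0,T]$; a standard Gr\"onwall estimate applied to $\|M'(t)\|_{\mathfrak J_2(\Qb;\Qb)}\leqslant2\int_0^t\|A(s)\|_{\mathrm{op}}\,\|M'(s)\|_{\mathfrak J_2(\Qb;\Qb)}\,\rd s$ then forces $M'\equiv O$. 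Hence $QQ^\dag=\id$ on $[0,T]$. (Equivalently, one may note that the propagator of $\pa_tQ=AQ$ is a unitary time-ordered exponential precisely because $A^\dag=-A$.)

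The only place requiring any care — and it is not a real obstacle — is the adjoint identity $A^\dag=-A$: one must check that the operator power series defining $A$ converges in operator norm (which follows from the infinite radius of convergence together with the bound on $\|PP^\dag\|_{\mathrm{op}}$) and that taking adjoints commutes with this norm-convergent sum. I note also that the $m=0$ term $\mathrm i\alpha_0\id$, even though it is not a Hilbert--Schmidt operator, is harmless here: it commutes with $M$ and therefore drops out of the commutator $AM-MA$, and only the boundedness and norm-continuity of $A$ are used in the uniqueness step.
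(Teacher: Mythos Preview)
Your proof is correct and follows essentially the same route as the paper: derive the commutator equation $\pa_t(QQ^\dag)=f\,(QQ^\dag)-(QQ^\dag)\,f$ from $f^\dag=-f$, observe that $\id$ is a fixed point, and conclude by uniqueness. The paper leaves both the verification of $f^\dag=-f$ and the uniqueness step implicit, whereas you spell out the former via the power-series argument and the latter via a Gr\"onwall estimate in $\mathfrak J_2(\Qb;\Qb)$; these are welcome elaborations but not a different method.
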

\begin{proof}
By definition $f^\dag=-f$, and using the product rule 
$\pa_t\bigl(QQ^\dag\bigr)=f\,(QQ^\dag)-(QQ^\dag)\,f$. 
Thus $QQ^\dag=\id$ is a fixed point of this flow and $Q(0)=\id$ implies 
$QQ^\dag=\id$ for all $t\in[0,T]$.\qed
\end{proof}
In addition to the linear base and auxiliary equations above, we 
again posit a linear relation between $P=P(t)$ and $Q=Q(t)$, the 
Riccati relation $P=G\,Q$, exactly as in \S\ref{sec:quadflows}.
Indeed the results of Lemma~\ref{lemma:EandU} for the existence and 
uniqueness of a solution $G$ to the Riccati relation apply here. 
Further, as previously, hereafter we set $P(0)=G(0)$.
Our main result of this section is as follows.
\begin{theorem}[Odd Degree Evolution Equation]\label{theorem:generalmain}
Given initial data $G_0\in\mathrm{Dom}(D)$
we set $Q(0)=\id$ and $P(0)=G_0$.
Suppose there exists a $T>0$ such that the linear operators 
$P\in C^{\infty}\bigl([0,T];\mathrm{Dom}(D)\bigr)$ and
$Q-\id\in C^{\infty}\bigl([0,T];\mathfrak J_2(\Qb;\Qb)\bigr)$ satisfy the
linear base and auxiliary equations above. We choose $T>0$ so that for 
$t\in[0,T]$ we have $\mathrm{det}_2\bigl(Q(t)\bigr)\neq0$ and
$\|Q^\prime(t)\|_{\mathfrak J_2(\Qb;\Qb)}<1$. Then there exists a unique solution 
$G\in C^{\infty}\bigl([0,T];\mathrm{Dom}(D)\bigr)$
to the Riccati relation which necessarily satisfies the evolution equation
\begin{equation*}
\pa_tG=DG-G\,f(GG^\dag).
\end{equation*}
\end{theorem}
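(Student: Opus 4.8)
The plan is to mirror the proof of Theorem~\ref{theorem:quadmain} almost verbatim, the only new ingredient being the constraint $QQ^\dag=\id$ which has already been established in the preceding lemma. First I would invoke Lemma~\ref{lemma:EandU} (which, as the excerpt notes, applies unchanged here since the Riccati relation $P=G\,Q$ is the same): because $Q^\prime(0)=O$, by continuity there is a $T>0$ on which $\mathrm{det}_2(Q(t))\neq0$ and $\|Q^\prime(t)\|_{\mathfrak J_2(\Qb;\Qb)}<1$, and on that interval $G(t)=P(t)\bigl(\id+Q^\prime(t)\bigr)^{-1}$ exists, is the unique solution to $P=G\,Q$, and lies in $C^\infty\bigl([0,T];\mathrm{Dom}(D)\bigr)$. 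Since $P(0)=G_0=P(0)\,Q(0)^{-1}$, uniqueness forces $G(0)=G_0$.

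The core computation is then: differentiate $P=G\,Q$ in $t$ by the product rule to get $\pa_tP=(\pa_tG)\,Q+G\,\pa_tQ$. Substitute the base and auxiliary equations $\pa_tP=DP$ and $\pa_tQ=f(PP^\dag)\,Q$, and feed the Riccati relation back in wherever $P$ appears. On the left, $DP=D(GQ)=(DG)\,Q$ since $Q$ acts on the source side. On the right, $G\,\pa_tQ=G\,f(PP^\dag)\,Q$, and here I would use $P=G\,Q$ together with $QQ^\dag=\id$ to rewrite $PP^\dag=GQ(GQ)^\dag=GQQ^\dag G^\dag=GG^\dag$, so that $G\,f(PP^\dag)\,Q=G\,f(GG^\dag)\,Q$. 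Collecting terms yields $\bigl(\pa_tG\bigr)Q=(DG)\,Q-\bigl(G\,f(GG^\dag)\bigr)Q$. Equivalencing with respect to $Q$, i.e.\ postcomposing by $Q^{-1}$, gives $\pa_tG=DG-G\,f(GG^\dag)$, as claimed.

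The step that needs the most care is the identity $PP^\dag=GG^\dag$: it is exactly the place where the constraint $QQ^\dag=\id$ is used, so one must check that $QQ^\dag=\id$ indeed holds for all $t\in[0,T]$ — which is precisely the content of the lemma just proved, relying on $f^\dag=-f$ (itself forced by the imaginary power series form $f(x)=\mathrm{i}\sum_m\alpha_m x^m$ with real $\alpha_m$). One should also confirm that $f(GG^\dag)\in\mathfrak J_2(\Qb;\Qb)$ so that $G\,f(GG^\dag)$ is well defined in $\mathrm{Dom}(D)$: since $GG^\dag$ is bounded and self-adjoint and $f$ has infinite radius of convergence, $f(GG^\dag)$ is a bounded operator, and composing the domain-valued $G$ on the left with a bounded operator on the right keeps us in $\mathrm{Dom}(D)$, so the right-hand side of the evolution equation lives in the correct space. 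The rest — smoothness in $t$ of $G$, hence of both sides — follows from $G\in C^\infty\bigl([0,T];\mathrm{Dom}(D)\bigr)$ exactly as in the quadratic case. No genuine obstacle arises; the argument is a direct transcription of Theorem~\ref{theorem:quadmain} with $A=f(PP^\dag)$, $B=C=O$, and the $QQ^\dag=\id$ reduction of $PP^\dag$ to $GG^\dag$ supplying the only new line.
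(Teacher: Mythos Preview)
Your proposal is correct and follows essentially the same route as the paper: invoke Lemma~\ref{lemma:EandU} for existence and uniqueness of $G$, use $QQ^\dag=\id$ to reduce $PP^\dag$ to $GG^\dag$, then differentiate the Riccati relation and postcompose by $Q^{-1}$. The paper's proof is terser and omits your domain and well-definedness remarks, but the argument is identical.
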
 
\begin{proof}
First, using the Riccati relation and that $QQ^\dag=\id$, we have
$PP^\dag=GG^\dag$ and thus $f(PP^\dag)=f(GG^\dag)$ for all $t\in[0,T]$. 
Second, differentiating the Riccati relation with respect to time using the product rule
and then substituting for $P$ using the Riccati relation,
we have $\bigl(\pa_tG\bigr)Q=\pa_tP-G\,\pa_tQ=DG\,Q-G\,f(PP^\dag)\,Q=DG\,Q-G\,f(GG^\dag)\,Q$.
As previously, equivalencing by $Q$, i.e.\/ postcomposing by $Q^{-1}$, 
establishes the result. \qed
\end{proof}
We now consider applications of Theorem~\ref{theorem:generalmain} above 
and demonstrate how to find solutions 
to classes of partial differential systems with nonlocal odd degree nonlinearities. 
For $x,y\in\R$ and $t\geqslant0$, suppose the functions $p=p(x,y;t)$ and $q=q(x,y;t)$ 
are scalar complex valued, with $p\in\C$ and $q\in\C$, and they satisfy the linear base 
and auxiliary equations 
\begin{equation*}
\pa_t p=-\mathrm{i}h(\partial_1)p
\qquad\text{and}\qquad
\pa_t q=f^\star\bigl(p\star p^\dag\bigr)\star q.
\end{equation*}
Here $h=h(\pa_1)$ is a polynomial function of $\pa_1$ 
with only even degree terms of its argument and constant coefficients.
By analogy with \S\ref{sec:quadflows}, here we have made the choice 
$d(\pa_1)=-\mathrm{i}h(\partial_1)$. 
The nonlocal product `$\star$' is defined for any two 
functions $w,w^\prime\in L^2(\R^2;\C)$ by
\begin{equation*}
\bigl(w\star w^\prime\bigr)(x,y)\coloneqq
\int_\R w(x,z)\,w^\prime(z,y)\,\rd z.
\end{equation*}
Hence the expression $p\star p^\dag$ thus represents the kernel function 
\begin{equation*}
\bigl(p\star p^\dag\bigr)(x,y;t)\coloneqq
\int_\R p(x,z;t)p^*(y,z;t)\,\rd z,
\end{equation*}
Note here we have used that if an operator has integral kernel $p=p(x,y;t)$, 
its adjoint has integral kernel $p^\ast(y,x;t)$, where the `$\ast$' in general 
denotes complex conjugate transpose. The expression $f^\star(c)$, 
for some kernel function $c$, represents the series 
with real coefficients $\alpha_m$ given by 
\begin{equation*}
f^\star(c)=\mathrm{i}\sum_{m\geqslant0}\alpha_m c^{\star m},
\end{equation*}
where $c^{\star m}$ is the $m$-fold product $c\star\cdots\star c$. 
We assume this power series has an infinite radius of convergence.
In the linear auxiliary equation we take $c=p\star p^\dag$.
It is natural to take the Fourier transform of the 
base and auxiliary equations with respect to $x$ and $y$. 
The corresponding 
equations for $\widehat{p}=\widehat{p}(k,\kappa;t)$ 
and $\widehat{q}=\widehat{q}(k,\kappa;t)$ are
\begin{equation*}
\pa_t \widehat{p}=-\mathrm{i}h(2\pi\mathrm{i}k)\,\widehat{p}
\qquad\text{and}\qquad
\pa_t \widehat{q}=\widehat{f}^\star\bigl(\widehat{p}\star\widehat{p}^\dag\bigr)
\star\widehat{q}.
\end{equation*}
Here we have used Parseval's identity for Fourier transforms which implies
\begin{equation*}
\bigl(\widehat{w\star w^\prime}\bigr)(k,\kappa)
=\int_\R\widehat{w}(k,\lambda)\,\widehat{w}^\prime(\lambda,\kappa)\,\rd\lambda
=\bigl(\widehat{w}\star\widehat{w}^\prime\bigr)(k,\kappa)
\end{equation*}
for any two functions $w,w^\prime\in L^2(\R^2;\C)$.
Hence we see that for $f^\star=f^\star(c)$, we have
\begin{equation*}
f^\star=\mathrm{i}\sum_{m\geqslant0}\alpha_m c^{\star m}
\qquad\Leftrightarrow\qquad
\widehat{f}^\star=\mathrm{i}\sum_{m\geqslant0}\alpha_m \widehat{c}^{\star m}.
\end{equation*}
Further we note that if $q(x,y;t)=\delta(x-y)+q^\prime(x,y;t)$ then 
$\widehat{q}(k,\kappa;t)=\delta(k-\kappa)+\widehat{q}^\prime(k,\kappa;t)$.
The Dirac delta function $\delta$ here also represents the identity 
with respect to the `$\star$' product so that for any $w\in L^2(\R^2;\C)$ 
we have $w\star\delta=\delta\star w=w$.
With all this in hand, we can in fact explicitly solve for 
$\widehat{p}=\widehat{p}(k,\kappa;t)$ and 
$\widehat{q}=\widehat{q}(k,\kappa;t)$ as follows.
\begin{lemma}\label{lemma:explicitlinearodddegree}
Let $\widehat{p}=\widehat{p}(k,\kappa;t)$ and 
$\widehat{q}=\widehat{q}(k,\kappa;t)$ denote the
two-dimensional Fourier transforms of the solutions to the 
linear base and auxiliary equations just above.
Assume that $q(x,y;0)=\delta(x-y)$ and $p(x,y;0)=p_0(x,y)$.
Then for all $t\geqslant0$ the functions $\widehat{p}$ and 
$\widehat{q}$ are explicitly given by
\begin{align*}
\widehat{p}(k,\kappa;t)&=\exp\bigl(-\mathrm{i}t\,h(2\pi\mathrm{i}k)\bigr)\,\widehat{p}_0(k,\kappa),\\
\widehat{q}(k,\kappa;t)&=\exp\bigl(-\mathrm{i}t\,h(-2\pi\mathrm{i}k)\bigr)\cdot
\exp^\star\Bigl(t\bigl(\widehat{f}^\star(\widehat{p}_0\star\widehat{p}_0^\dag)
+\mathrm{i}h\cdot\delta\bigr)\Bigr)(k,\kappa;t),
\end{align*}
where naturally $\exp^\star(c)=\delta+c+\frac12c^{\star2}+\frac16c^{\star3}+\cdots$.
\end{lemma}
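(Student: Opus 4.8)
The plan is to integrate the two linear equations in turn, treating the wavenumbers $k,\kappa$ as parameters, and then to repackage the answer using the ``free flow'' generated by the diagonal symbol $h(2\pi\mathrm{i}k)$. \emph{Step 1 (base equation).} For fixed $(k,\kappa)$ the equation $\pa_t\widehat p=-\mathrm{i}h(2\pi\mathrm{i}k)\,\widehat p$ is a scalar linear ODE in $t$. Because $h$ has only even-degree terms with real coefficients, the symbol $\omega(k)\coloneqq h(2\pi\mathrm{i}k)=\sum_j c_{2j}(-1)^j(2\pi k)^{2j}$ is real and even in $k$; in particular $h(-2\pi\mathrm{i}k)=h(2\pi\mathrm{i}k)=\omega(k)$. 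Integrating with $\widehat p(k,\kappa;0)=\widehat p_0(k,\kappa)$ immediately gives the first claimed formula $\widehat p(k,\kappa;t)=\mathrm{e}^{-\mathrm{i}t\omega(k)}\widehat p_0(k,\kappa)$. I would write $E_t$ for the diagonal kernel $E_t(k,\kappa)=\mathrm{e}^{-\mathrm{i}t\omega(k)}\delta(k-\kappa)$, so that $\widehat p=E_t\star\widehat p_0$; since $|\mathrm{e}^{-\mathrm{i}t\omega(k)}|=1$ the family $\{E_t\}$ is a $\star$-unitary one-parameter group with $E_t^{-1}=E_{-t}$.

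\emph{Step 2 (the conjugation identity).} Using the formula for $\widehat p$ together with $\overline{\omega(\kappa)}=\omega(\kappa)$ and $\omega(-\kappa)=\omega(\kappa)$, and Parseval (so that $\widehat{p\star p^\dag}=\widehat p\star\widehat p^\dag$), a direct computation gives
\begin{equation*}
\bigl(\widehat p\star\widehat p^\dag\bigr)(k,\kappa;t)
=\mathrm{e}^{-\mathrm{i}t\omega(k)}\,\bigl(\widehat p_0\star\widehat p_0^\dag\bigr)(k,\kappa)\,\mathrm{e}^{+\mathrm{i}t\omega(\kappa)}
=\bigl(E_t\star(\widehat p_0\star\widehat p_0^\dag)\star E_t^{-1}\bigr)(k,\kappa).
\end{equation*}
Since $f^\star$ acts through the $\star$-power series $\mathrm{i}\sum_m\alpha_m(\,\cdot\,)^{\star m}$ and conjugation by the $\star$-invertible $E_t$ satisfies $(E_t\star X\star E_t^{-1})^{\star m}=E_t\star X^{\star m}\star E_t^{-1}$ for every $m\geqslant0$, it follows by linearity that
\begin{equation*}
\widehat f^\star\bigl(\widehat p\star\widehat p^\dag\bigr)=E_t\star\widehat f^\star\bigl(\widehat p_0\star\widehat p_0^\dag\bigr)\star E_t^{-1}=E_t\star M_0\star E_t^{-1},
\end{equation*}
where $M_0\coloneqq\widehat f^\star(\widehat p_0\star\widehat p_0^\dag)\in\mathfrak{J}_2(\Qb;\Qb)$; this last membership holds because $f^\star$ has infinite radius of convergence and $\widehat p_0\star\widehat p_0^\dag$ is Hilbert--Schmidt.

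\emph{Step 3 (rotating frame, and finish).} The auxiliary equation now reads $\pa_t\widehat q=(E_t\star M_0\star E_t^{-1})\star\widehat q$. The plan is to substitute $\widehat q=E_t\star S$. Since $\pa_tE_t=(-\mathrm{i}\omega\cdot\delta)\star E_t$ and the diagonal kernel $-\mathrm{i}\omega\cdot\delta$ commutes with $E_t$, the product rule together with left-cancellation of the $\star$-invertible $E_t$ reduces the equation to the constant-coefficient equation $\pa_tS=(M_0+\mathrm{i}\omega\cdot\delta)\star S$; here $\mathrm{i}\omega\cdot\delta$ is precisely the diagonal kernel written $\mathrm{i}h\cdot\delta$ in the statement. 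The initial condition $\widehat q(k,\kappa;0)=\delta(k-\kappa)$ gives $S(0)=\delta$, hence $S(t)=\exp^\star\!\bigl(t(M_0+\mathrm{i}h\cdot\delta)\bigr)$, and re-assembling $\widehat q=E_t\star S$ and using that $E_t$ is diagonal yields the second claimed formula $\widehat q(k,\kappa;t)=\mathrm{e}^{-\mathrm{i}th(-2\pi\mathrm{i}k)}\exp^\star\!\bigl(t(\widehat f^\star(\widehat p_0\star\widehat p_0^\dag)+\mathrm{i}h\cdot\delta)\bigr)(k,\kappa;t)$, using $\omega(k)=h(-2\pi\mathrm{i}k)$ to match the stated exponent. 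Equivalently, and more cleanly, one may simply exhibit these two formulas and verify them directly by differentiating, using the product rule and the defining identity $\pa_t\exp^\star(tX)=X\star\exp^\star(tX)$, together with the $m=0$ evaluation $\exp^\star(0)=\delta$; uniqueness of solutions to the linear base and auxiliary equations then identifies them as \emph{the} solutions.

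\emph{Where the difficulty lies.} Two points need care. The first is the Fourier-domain bookkeeping of the $\star$-product together with the $\dag$ operation: one must check that the free flow $E_t$ is diagonal and $\star$-unitary and that conjugation by it commutes past $\widehat f^\star$, and this is exactly where the evenness and realness of $h$ enter, so that $\omega(k)=h(2\pi\mathrm{i}k)$ is real (whence $|\mathrm{e}^{-\mathrm{i}t\omega(k)}|=1$) and even (whence $h(-2\pi\mathrm{i}k)=h(2\pi\mathrm{i}k)$), and so that passing to the adjoint produces the complex-conjugate symbol $\mathrm{e}^{+\mathrm{i}t\omega(\kappa)}$ on the second slot. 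The second, more delicate, point is the meaning of $\exp^\star$ applied to $t(\widehat f^\star(\widehat p_0\star\widehat p_0^\dag)+\mathrm{i}h\cdot\delta)$: the first summand is a genuine Hilbert--Schmidt kernel, but $\mathrm{i}h\cdot\delta$ is an unbounded diagonal multiplier, so the naive $\star$-power series does not converge in $\mathfrak{J}_2(\Qb;\Qb)$. The resolution is to read $\exp^\star$ here as the solution operator of the associated linear operator equation: the generator $\mathrm{i}h\cdot\delta$ produces the $\star$-unitary group $\{E_{-t}\}$, so adding the bounded operator $M_0$ still yields a $C_0$-group and a well-defined solution operator; alternatively, the direct verification above never invokes the divergent series, since there $\exp^\star$ appears only in the combination $E_t\star\exp^\star(t(M_0+\mathrm{i}h\cdot\delta))$, which is the convergent object.
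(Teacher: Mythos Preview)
Your proof is correct and follows essentially the same route as the paper: you and the paper both compute $\widehat{f}^\star(\widehat{p}\star\widehat{p}^\dag)$ in terms of the initial data (you via the conjugation identity $E_t\star M_0\star E_t^{-1}$, the paper via the equivalent explicit telescoping of the integral for $\widehat{c}^{\star m}$), then make the same change of variable $\widehat{q}=E_t\star S$ (the paper's $\widehat{\theta}$ is your $S$) to reduce to a constant-coefficient $\star$-equation solved by $\exp^\star$. Your additional remarks on the unboundedness of $\mathrm{i}h\cdot\delta$ in $\exp^\star$ go a little beyond what the paper writes in its proof, but the paper makes the same point in the remark immediately following.
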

\begin{proof}
The explicit form for $\widehat{p}=\widehat{p}(k,\kappa;t)$ follows directly from
the Fourier transformed version of the base equation. We now focus on the auxiliary equation. 
Consider a typical term say $\widehat{c}^{\star m}$
in $\widehat{f}^\star$, with $\widehat{c}\coloneqq\widehat{p}\star\widehat{p}^\dag$. 
Using Parseval's identity the term 
$\widehat{c}^{\star m}=\bigl(\widehat{p}\star\widehat{p}^\dag\bigr)^{\star m}$ has the explicit
form
\begin{equation*}
\widehat{c}^{\star m}(\nu_0,\nu_m;t)=\int_{\R^{2m-1}}
\biggl(\prod_{j=1}^m \widehat{p}(\nu_{j-1},\lambda_j;t)\widehat{p}^*(\nu_j,\lambda_j;t)\biggr)
\rd\lambda_1\cdots\rd\lambda_m\,\rd\nu_1\cdots\rd\nu_{m-1}. 
\end{equation*}
If we insert the explicit solution for $\widehat{p}$ into this expression and use that
$h$ is a polynomial of even degree terms only, we find
\begin{multline*}
\widehat{c}^{\star m}(\nu_0,\nu_m;t)=
\exp\Bigl(-\mathrm{i}t\bigl(h(2\pi\mathrm{i}\nu_0)-h(-2\pi\mathrm{i}\nu_m)\bigr)\Bigr)\\
\times\int_{\R^{2m-1}}
\biggl(\prod_{j=1}^m \widehat{p}_0(\nu_{j-1},\lambda_j)\widehat{p}_0^*(\nu_j,\lambda_j)\biggr)
\rd\lambda_1\cdots\rd\lambda_m\,\rd\nu_1\cdots\rd\nu_{m-1}. 
\end{multline*}
Hence we deduce that 
\begin{equation*}
\bigl(\widehat{f}^{\star}(\widehat{p}\star\widehat{p}^\dag)\bigr)(\nu_0,\nu_m;t)
=\exp\Bigl(-\mathrm{i}t\bigl(h(2\pi\mathrm{i}\nu_0)-h(-2\pi\mathrm{i}\nu_m)\bigr)\Bigr)
\bigl(\widehat{f}^{\star}(\widehat{p}_0\star\widehat{p}_0^\dag)\bigr)(\nu_0,\nu_m).
\end{equation*}
The auxiliary equation thus has the explicit form
\begin{equation*}
\pa_t\widehat{q}(k,\kappa;t)=\int_{\R}
\exp\Bigl(-\mathrm{i}t\bigl(h(2\pi\mathrm{i}k)-h(-2\pi\mathrm{i}\nu)\bigr)\Bigr)
\bigl(\widehat{f}^{\star}(\widehat{p}_0\star\widehat{p}_0^\dag)\bigr)(k,\nu)\,
\widehat{q}(\nu,\kappa;t)\,\rd\nu.
\end{equation*}
By making a change of variables we can convert this linear differential
equation for $\widehat{q}=\widehat{q}(k,\kappa;t)$ into a constant coefficient
linear differential equation. Indeed we set
\begin{equation*}
\widehat{\theta}(k,\kappa;t)
\coloneqq\exp\bigl(\mathrm{i}t\,h(-2\pi\mathrm{i}k)\bigr)\widehat{q}(k,\kappa;t).
\end{equation*}
Combining this definition with the linear differential
equation for $\widehat{q}=\widehat{q}(k,\kappa;t)$ above, we find
\begin{equation*}
\pa_t\widehat{\theta}(k,\kappa;t)
=\int_{\R}\bigl(\widehat{f}^{\star}(\widehat{p}_0\star\widehat{p}_0^\dag)\bigr)(k,\nu)\,
\widehat{\theta}(\nu,\kappa;t)\,\rd\nu
+\mathrm{i}\,h(-2\pi\mathrm{i}k)\widehat{\theta}(k,\kappa;t),
\end{equation*}
where, crucially, we again used that $h(-2\pi\mathrm{i}k)-h(2\pi\mathrm{i}k)\equiv0$
as $h$ is a polynomial of even degree terms. Hence 
the evolution equation for $\widehat{\theta}$ is the linear constant 
coefficient equation
\begin{equation*}
\pa_t\widehat{\theta}=\bigl(\widehat{f}^\star(\widehat{p}_0\star\widehat{p}_0^\dag)
+\mathrm{i}h\cdot\delta\bigr)\star\widehat{\theta},
\end{equation*}
Note the coefficient function depends only on the initial data $p_0$. 
Further note we have used that
\begin{equation*}
\bigl((\mathrm{i}h\,\delta)\star\widehat{\theta}\bigr)(k,\kappa;t)
=\mathrm{i}\,h(-2\pi\mathrm{i}k)
\int_\R\delta(k-\nu)\widehat{\theta}(\nu,\kappa;t)\,\rd\nu
=\mathrm{i}\,h(-2\pi\mathrm{i}k)\widehat{\theta}(k,\kappa;t).
\end{equation*}
Let us now focus on the initial data. Recall that we choose $q(x,y;0)=\delta(x-y)$
corresponding to $q^\prime(x,y;0)=0$. Hence we have 
$\widehat{q}(k,\kappa;0)=\widehat{\theta}(k,\kappa;0)=\delta(k-\kappa)$. 
The solution to the linear constant coefficient equation for 
$\widehat{\theta}=\widehat{\theta}(k,\kappa;t)$, by iteration, 
can thus be expressed in the form 
\begin{equation*}
\widehat{\theta}(k,\kappa;t)
=\exp^\star\Bigl(t\bigl(\widehat{f}^\star(\widehat{p}_0\star\widehat{p}_0^\dag)
+\mathrm{i}h\cdot\delta\bigr)\Bigr)(k,\kappa;t),
\end{equation*}
where $\exp^\star(c)=\delta+c+\frac12c^{\star2}+\frac16c^{\star3}+\cdots$.
We can recover $\widehat{q}$ from the definition for 
$\widehat{\theta}$ above. \qed
\end{proof}
\begin{remark}
The iterative procedure alluded to in the proof just above
ensures the correct interpretation of the terms in the exponential 
expansion $\exp^\star$ in the expression for $\widehat{q}=\widehat{q}(k,\kappa;t)$ 
above. Hence for example we have
$(\widehat{f}+\mathrm{i}h\cdot\delta)^{\star2}=
\widehat{f}\star\widehat{f}
+\widehat{f}\star(\mathrm{i}h\cdot\delta)
+\mathrm{i}h\cdot\widehat{f}
+(\mathrm{i}h)\cdot(\mathrm{i}h)\cdot\delta$.
\end{remark}
\begin{remark}[Hilbert--Schmidt solutions]\label{remark:HS2}
Here we suppose $\mathbb H=L^2(\mathbb R;\mathbb C)\times\mathrm{Dom}(D)$ 
with $\mathrm{Dom}(D)\subseteq L^2(\mathbb R;\mathbb C)$ and 
$\mathbb H=\mathbb Q\oplus\mathbb P$ with $\mathbb P$ 
and $\mathbb Q$ closed subspaces of $\Hb$; see Beck \textit{et al.\/} ~\cite{BDMS}.
The functions in $\Qb$ and $\Pb$ are both $\C$-valued. As in Remark~\ref{remark:HS},
with $Q(t)=\id+Q^\prime(t)$, the operators $Q^\prime(t)\in\mathfrak J_2(\Qb;\Qb)$ 
and $P(t)\in\mathfrak J_2(\Qb;\Pb)$ can be characterized, respectively, 
by kernel functions $q^\prime(\cdot,\cdot;t)\in L^2(\R^2;\C)$ and 
$p(\cdot,\cdot;t)\in L^2(\R^2;\C)$. Further we have the usual isometry
of Hilbert--Schmidt and $L^2(\R^2;\C)$-norms. The linear base and auxiliary 
equations for $p=p(x,y;t)$ and $q^\prime=q^\prime(x,y;t)$ are the versions
of the linear base and auxiliary equations in 
Definition~\ref{definition:odddegreebase} written in terms of their 
integral kernels; with $q(x,y;t)=\delta(x-y)+q^\prime(x,y;t)$.
Note we set $D=d(\pa_1)$ and indeed 
$d(\pa_1)=-\mathrm{i}\,h(\pa_1)$ where $h$ is a polynomial 
of even degree terms only with constant coefficients. Hence
$d=d(\pa_1)$ is of dispersive form and
$P\in C^\infty\bigl([0,T];\mathrm{Dom}(D)\bigr)$ as required
in the ``abstract'' formulation. We observe from the form
of the Fourier transform for the solution $\widehat{p}=\widehat{p}(k,\kappa;t)$
given in Lemma~\ref{lemma:explicitlinearodddegree}, that any 
Fourier Sobolev norm of the solution at any time $t>0$ equals 
the corresponding Fourier Sobolev norm of the initial data $\widehat{p}_0(k,\kappa)$.
Hence if the initial data is smooth, which we assume, so is $p=p(x,y;t)$ for all $t>0$.
Let us now focus on $q=q(x,y;t)$ which we recall satisfies the linear
auxiliary equation $\pa_t q=f^\star\bigl(p\star p^\dag\bigr)\star q$ and
the initial condition $q(x,y;0)=\delta(x-y)$.
Since $p=p(x,y;t)$ is bounded in any Sobolev norm for all $t>0$, 
so is $f^\star\bigl(p\star p^\dag\bigr)$. Let $\mathfrak p(t)$
denote the function $\{(x,y)\mapsto p(x,y;t)\}$, while 
$\mathfrak q(t)$ denotes the function $\{(x,y)\mapsto q(x,y;t)\}$
and $\mathfrak f(t)$ denotes the function $\{(x,y)\mapsto f^\star(x,y;t)\}$. 
By integrating in time, we can express the 
linear auxiliary equation in the abstract form 
\begin{equation*}
\mathfrak q(t)=\delta+\int_0^t\mathfrak f(\tau)\star\mathfrak q(\tau)\,\rd\tau.
\end{equation*}
Note we used that the Dirac delta function is the initial data, 
i.e.\/ $\mathfrak q(0)=\delta$. Recall it is also the unit with respect to
the `$\star$' product. We iterate this formula for $\mathfrak q(t)$
to generate the solution series
\begin{equation*}
\mathfrak q(t)=\delta+\int_0^t\mathfrak f(\tau)\rd\tau
+\int_0^t\int_0^{\tau}\mathfrak f(\tau)\star\mathfrak f(s)\,\rd s\,\rd\tau
+\int_0^t\int_0^{\tau}\int_0^{s}
\mathfrak f(\tau)\star\mathfrak f(s)\star\mathfrak f(r)
\,\rd r\,\rd s\,\rd\tau+\cdots.
\end{equation*}
Note that we have the following estimate for the $L^2(\R^2;\C)$-norm of
$\mathfrak f(\tau)\star\mathfrak f(s)$:
\begin{align*}
\bigl\|\mathfrak f(\tau)\star\mathfrak f(s)\bigr\|^2
&=\int_{\R^2}\biggl|\int_\R f(x,z;\tau)\,f(z,y;s)\,\rd z\biggr|^2\,\rd x\,\rd y\\
&\leqslant\int_{\R^2}\biggl(\int_\R |f|^2(x,z;\tau)\,\rd z\biggr)
\biggl(\int_\R |f|^2(z,y;s)\,\rd z\biggr)\,\rd x\,\rd y\\
&=\bigl\|\mathfrak f(\tau)\bigr\|^2\,\bigl\|\mathfrak f(s)\bigr\|^2.
\end{align*}
This estimate extends to 
$\bigl\|\mathfrak f(\tau)\star\mathfrak f(s)\star\mathfrak f(r)\bigr\|^2
\leqslant\bigl\|\mathfrak f(\tau)\bigr\|^2\,\bigl\|\mathfrak f(s)\bigr\|^2
\,\bigl\|\mathfrak f(r)\bigr\|^2$ and so forth. Since for any $T>0$ 
there exists a constant $K>0$ such that for all $t\in[0,T]$ we have
$\bigl\|\mathfrak f(t)\bigr\|^2\leqslant K$, we observe that 
the $L^2(\R^2;\C)$-norm of $\bigl(\mathfrak q(t)-\delta\bigr)$ is bounded 
as follows,
\begin{align*}
\bigl\|\mathfrak q(t)-\delta\bigr\|^2
&\leqslant\int_0^t\bigl\|\mathfrak f(\tau)\bigr\|^2\rd\tau
+\int_0^t\int_0^{\tau}\bigl\|\mathfrak f(\tau)\star\mathfrak f(s)\bigr\|^2\,\rd s\,\rd\tau
+\cdots\\
&\leqslant\int_0^t\bigl\|\mathfrak f(\tau)\bigr\|^2\rd\tau
+\int_0^t\int_0^{\tau}\bigl\|\mathfrak f(\tau)\bigr\|^2
\,\bigl\|\mathfrak f(s)\bigr\|^2\,\rd s\,\rd\tau
+\cdots\\
&\leqslant\exp(t\,K)-1.
\end{align*}
Consequently $\|Q^\prime(t)\|_{\mathfrak J_2(\Qb;\Qb)}$ is bounded. 
Further, recalling arguments in the proof of Lemma~\ref{lemma:EandU},
there exists a $T>0$ such that for all $t\in[0,T]$ we have 
$\|Q^\prime(t)\|_{\mathfrak J_2(\Qb;\Qb)}<1$ and 
$\mathrm{det}_2\bigl(\id+Q^\prime(t)\bigr)\neq0$. 
\end{remark}
\begin{corollary}[Evolutionary PDEs with odd degree nonlocal nonlinearities]
\label{cor:generalodd}
Given initial data $g_0\in C^\infty(\R^2;\C)\cap L^2(\R^2;\C)$, 
suppose $p=p(x,y;t)$ and $q=q(x,y;t)$
are the solutions to the linear base and auxiliary equations
from Lemma~\ref{lemma:explicitlinearodddegree} for which $p_0\equiv g_0$ 
and $q(x,y;0)=\delta(x-y)$. Let $\mathrm{Dom}(d)$ denote the domain
of the operator $d=-\mathrm{i}\,h(\pa_1)$ where $h=h(\pa_1)$ is defined above. 
Then there exists a $T>0$ such that the solution 
$g\in C^\infty\bigl([0,T];\mathrm{Dom}(d)\cap L^2(\R^2;\C)\bigr)$ 
to the linear Fredholm equation
\begin{equation*}
p(x,y;t)=\int_{\R}g(x,z;t)\,q(z,y;t)\,\rd z.
\end{equation*}
solves the evolutionary partial differential equation with 
odd degree nonlocal nonlinearity of the form
\begin{equation*}
\pa_tg=-\mathrm{i}h(\pa_1)\,g-g\star f^\star(g\star g^\dag).
\end{equation*}
\end{corollary}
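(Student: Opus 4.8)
The plan is to assemble the corollary from the machinery already in place, in three steps mirroring the proof of the quadratic-degree corollary. First I would establish that the Riccati relation $p(x,y;t)=\int_\R g(x,z;t)\,q(z,y;t)\,\rd z$, i.e.\ $P=G\,Q$ in operator form, has a unique smooth solution $G$ on some time interval $[0,T]$. This follows from Lemma~\ref{lemma:EandU}: with $q(x,y;t)=\delta(x-y)+q^\prime(x,y;t)$ and $q^\prime(x,y;0)=0$, Remark~\ref{remark:HS2} already shows $\|Q^\prime(t)\|_{\mathfrak J_2(\Qb;\Qb)}<1$ and $\mathrm{det}_2(\id+Q^\prime(t))\neq0$ for $t$ in a sufficiently small $[0,T]$, so $G(t)=P(t)(\id+Q^\prime(t))^{-1}$ exists, is unique, and lies in $C^\infty\bigl([0,T];\mathrm{Dom}(D)\bigr)$. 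The kernel $g$ of $G$ is then the desired solution of the Fredholm equation, and by the isometry between Hilbert--Schmidt operators and their $L^2(\R^2;\C)$ kernels, $g\in C^\infty\bigl([0,T];\mathrm{Dom}(d)\cap L^2(\R^2;\C)\bigr)$ with $d=-\mathrm{i}h(\pa_1)$.

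Second I would check that the kernel-level base and auxiliary equations are exactly the operator equations of Definition~\ref{definition:odddegreebase} with the identifications $D=d(\pa_1)=-\mathrm{i}h(\pa_1)$, $B=C=O$, and $A=f(PP^\dag)$ whose kernel is $f^\star(p\star p^\dag)$. The only subtlety is matching the operator-adjoint $PP^\dag$ with the kernel expression $p\star p^\dag$: since an operator with kernel $p(x,y;t)$ has adjoint with kernel $p^*(y,x;t)$, the composition $PP^\dag$ has kernel $\int_\R p(x,z;t)\,p^*(y,z;t)\,\rd z=(p\star p^\dag)(x,y;t)$, as recorded in the text preceding Lemma~\ref{lemma:explicitlinearodddegree}. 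Likewise $f(x)=\mathrm{i}\sum_{m\geqslant0}\alpha_m x^m$ at the operator level corresponds term by term to $f^\star(c)=\mathrm{i}\sum_{m\geqslant0}\alpha_m c^{\star m}$ at the kernel level, because $\star$ is precisely operator composition on kernels. Having verified this dictionary, the hypotheses of Theorem~\ref{theorem:generalmain} are met: the lemma in \S\ref{sec:generalflows} gives $QQ^\dag=\id$ for all $t\in[0,T]$, and the theorem yields that $G$ satisfies $\pa_tG=DG-G\,f(GG^\dag)$.

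Third and finally I would translate this operator identity back to kernels. Writing out $\pa_tG=DG-G\,f(GG^\dag)$ with $D=-\mathrm{i}h(\pa_1)$ and using that $f(GG^\dag)$ has kernel $f^\star(g\star g^\dag)$ gives exactly
\begin{equation*}
\pa_tg=-\mathrm{i}h(\pa_1)\,g-g\star f^\star(g\star g^\dag),
\end{equation*}
which is the claimed evolutionary PDE. One could alternatively give the ``direct'' derivation analogous to Remark~\ref{remark:directview}: differentiate the Fredholm equation $p=g\star q$ in time, substitute the base and auxiliary equations, use $QQ^\dag=\id$ to replace $f^\star(p\star p^\dag)$ by $f^\star(g\star g^\dag)$, regroup everything as $\bigl(\pa_tg+\mathrm{i}h(\pa_1)g+g\star f^\star(g\star g^\dag)\bigr)\star q=0$, and postcompose by the kernel of $Q^{-1}$. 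The step most likely to need care is the adjoint/constraint bookkeeping: one must be sure that $QQ^\dag=\id$ really does force $f(PP^\dag)=f(GG^\dag)$ — this uses $P=GQ$ so $PP^\dag=GQQ^\dag G^\dag=GG^\dag$ — and that the antiselfadjointness $f^\dag=-f$ (equivalently, the coefficients $\alpha_m$ being real and the prefactor $\mathrm{i}$) is exactly what makes $QQ^\dag=\id$ a preserved constraint under $\pa_tQ=f(PP^\dag)Q$. Everything else is the routine kernel/operator translation already rehearsed in \S\ref{sec:quadflows}.
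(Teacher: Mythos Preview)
Your proposal is correct and follows essentially the same route as the paper's proof: invoke Remark~\ref{remark:HS2} together with (a slight modification of) Lemma~\ref{lemma:EandU} to obtain existence and uniqueness of $g$ solving the Fredholm equation on some $[0,T]$, recognize $g$ as the integral kernel of the operator $G$ furnished by Theorem~\ref{theorem:generalmain}, and then read off the stated PDE by translating $\pa_tG=DG-G\,f(GG^\dag)$ into kernel language. Your write-up is more explicit than the paper's about the operator/kernel dictionary and the adjoint bookkeeping, but the logical skeleton is identical.
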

\begin{proof}
From Remark~\ref{remark:HS2} we know that with a slight modification
of Lemma~\ref{lemma:EandU} for some $T>0$ there exists a solution 
$g\in C^\infty\bigl([0,T];\mathrm{Dom}(d)\cap L^2(\R^2;\C)\bigr)$ 
to the linear Fredholm equation (Riccati relation) shown. The solution $g$
is the integral kernel of $G$, which solves the Odd Degree Evolution
Equation in Theorem~\ref{theorem:generalmain}. Writing that 
equation in terms of the kernel function $g$ corresponds to the 
partial differential equation with odd degree nonlocal 
nonlinearity shown.\qed
\end{proof}
\begin{remark} We make the following observations: 
(i) Though we have a closed form for $p=p(x,y;t)$ in this case,
$q=q(x,y;t)$ has a series representation. However as for our results
in \S\ref{sec:quadflows}, time $t$ plays the role of a parameter in
the sense that we decide on the time at which we wish to evaluate the solution,
and then we solve the Fredholm equation to generate the solution $g$ for that time $t$;   
(ii) Also as for our results in \S\ref{sec:quadflows}, on the interval of
time for which we know $g$ exists, its regularity is determined by the 
regularity of $p$; and 
(iii) The extension of our results above to the case when $p$, $q$ and $g$
are $\C^{n\times n}$-valued functions for any $n\in\mathbb N$ is straightforward.
\end{remark}
There are many generalizations and concomitant results we intend to pursue. 
A few immediate ones are as follows. In all cases we assume the base equation 
to be $\pa_tP=DP$ and the Riccati relation has the form $P=G\,Q$. 
First, in the nonlocal cubic case assume the 
auxiliary equation has the form $\pa_tQ=(PAP^\dag)\,Q$
for some linear operator $A$ satisfying $A^\dag=-A$. This generates the cubic
form of the operator equation for $G$ in 
the Odd Degree Evolution Equation Theorem~\ref{theorem:generalmain} above.
However we observe $\pa_t(QAQ^\dag)=[PAP^\dag,QAQ^\dag]$. Hence if the 
commutator on the right vanishes initially then $QAQ^\dag$ maintains its
initial value thereafter. If we assume $Q_0AQ_0^\dag=\mathrm{i}\alpha\cdot\id$ 
then we recover the same result as that in Theorem~\ref{theorem:generalmain}
with the scalar $\alpha$ forced to be real from the skew-Hermitian property of $A$.
Second, suppose the auxiliary equation has the form $\pa_tQ=(A_1PA_2P^\dag A_3)\,Q$
for some operators $A_1$, $A_2$ and $A_3$. Assuming $Q$ satisfies the constraint
$QA_2Q^\dag=K$ for some time independent operator $K$ then $G$ can be shown to
satisfy $\pa_tG=D\,G-G\,(A_1GKGA_3)$. However, if $A_2^\dag=-A_2$ and $A_3=\pm A_1^\dag$,
then we observe that $\pa_t(QAQ^\dag)=\pm[A_1PA_2P^\dag A_1,K]$. Hence similarly,
if the commutator on the right vanishes initially and $Q_0A_2Q_0^\dag=K$ initially,
then this constraint is maintained thereafter. Third and lastly, we observe we
could assume the auxiliary equation has the form $\pa_tQ=f(PP^\dag)\,P$ to attempt
to generate even degree equations. We address further generalizations 
in our Conclusion \S\ref{sec:conclu}.

\section{Examples}\label{sec:examples}
We consider six example evolutionary partial differential equations 
with nonlocal nonlinearities in detail.
The first four examples are: (i) A reaction-diffusion system 
with nonlocal nonlinear reaction terms; 
(ii) The nonlocal Korteweg de Vries equation; (iii) A nonlocal nonlinear 
Schr\"odinger equation and (iv) A fourth order nonlinear Schr\"odinger equation
with a nonlocal sinusoidal nonlinearity. In each of these cases we 
provide the following. First, we present the evolutionary system and 
initial data and explain how it fits into the context of one 
of the systems presented in \S\ref{sec:quadflows} or \S\ref{sec:generalflows}. 
Second, we briefly explain how we simulated the evolutionary system
with nonlocal nonlinearity directly by adapting well-known algorithms,
mainly pseudo-spectral, for the versions of these systems with 
local nonlinearities. We denote these directly computed solutions 
by $g_{\mathrm{D}}$. Third, we explain in some more detail how
we generated solutions from the underlying linear base and 
auxiliary equations and the linear Riccati relation. We denote 
solutions computed using our Riccati method by $g_{\mathrm{R}}$. 
Then for a particular evaluation time $T>0$ we compute $g_{\mathrm{D}}$
and $g_{\mathrm{R}}$. We compare the two simulation results and 
explicitly plot their difference at that time $T$. We also
quote a value for the maximum norm over the spatial domain 
of the difference $g_{\mathrm{D}}-g_{\mathrm{R}}$. Additionally
we plot the evolution of $\mathrm{det}_2\bigl(\id+Q^\prime(t)\bigr)$, 
and in the first two examples $\|Q^\prime(t)\|_{\mathfrak J_2(\Qb;\Qb)}$. 
We emphasize that for all the examples, to compute $g_{\mathrm{R}}$ we simply 
evaluate the explicit forms for $p=p(x,y;t)$ and $q^\prime=q^\prime(x,y;t)$ 
or their Fourier transforms at the given time $t=T$. We then solve the 
corresponding Fredholm equation at time $t=T$ to generate $g_{\mathrm{R}}$.
The evolution plots for $\mathrm{det}_2\bigl(\id+Q^\prime(t)\bigr)$ and 
$\|Q^\prime(t)\|_{\mathfrak J_2(\Qb;\Qb)}$ are provided for interest
and analysis only. We remark that in some examples, at the 
evaluation times $t=T$, the norm $\|Q^\prime(t)\|_{\mathfrak J_2(\Qb;\Qb)}$ 
is greater than one. This suggests that the estimates
in Lemma~\ref{lemma:EandU}, whilst guaranteeing the behaviour required, 
are somewhat conservative. All the simulations are developed on the 
domain $[-L/2,L/2]^2$ with the problem projected spatially onto $M^2$ nodes, 
i.e.\/ $M$ nodes for the $x\in[-L/2,L/2]$ interval and 
$M$ nodes for the $y\in[-L/2,L/2]$ interval. Naturally $M^2$
also represents the number of two-dimensional Fourier modes
in our simulations. In each case we quote $L$ and $M$. All our
Matlab codes are provided in the supplementary electronic material.

The last two examples we present represent interesting special cases of our Riccati approach.
They are a: (v) Scalar evolutionary diffusive partial differential equation with a convolutional
nonlinearity and (vi) Nonlocal Fisher--Kolmogorov--Petrovskii--Piskunov equation
from biology/ecological systems. In the latter case we derive solutions for general
initial data constructed using our approach. As far as we know these have not been 
derived before.

\begin{example}[Reaction-diffusion system with nonlocal reaction terms]\label{ex:RDE}
In this case the target equation is the system of reaction-diffusion equations
with nonlocal reaction terms of the form
\begin{align*}
\pa_tu&=d_{11}u+d_{12}v-u\star(b_{11}u)-u\star(b_{12}v)-v\star(b_{12}u)-v\star(b_{11}v),\\
\pa_tv&=d_{11}v+d_{12}u-u\star(b_{11}v)-u\star(b_{12}u)-v\star(b_{12}v)-v\star(b_{11}u),
\end{align*}
where $u=u(x,y;t)$ and $v=v(x,y;t)$. We assume $d_{11}=\pa_1^2+1$, 
$d_{12}=-1/2$, $b_{12}=0$ and $b_{11}=N(x,\sigma)$, 
the Gaussian probability density function with mean zero. We set $\sigma=0.1$.
We take the initial profiles 
$u_0(x,y)\coloneqq\mathrm{sech}(x+y)\,\mathrm{sech}(y)$ and
$v_0(x,y)\coloneqq\mathrm{sech}(x+y)\,\mathrm{sech}(x)$,
and in this case  $L=20$ and $M=2^7$. This system fits into our general theory 
in \S\ref{sec:quadflows} when we take $p$, $q$ and $g$ 
to have the $2\times 2$ bisymmetric forms
\begin{equation*}
p=\begin{pmatrix}p_{11} & p_{12}\\ p_{12} & p_{11}\end{pmatrix},
\qquad
q=\begin{pmatrix}q_{11} & q_{12}\\ q_{12} & q_{11}\end{pmatrix}
\qquad\text{and}\qquad
g=\begin{pmatrix}g_{11} & g_{12}\\ g_{12} & g_{11}\end{pmatrix}.
\end{equation*}
We also assume similar forms for $d$ and $b$ with the components indicated above.
Note that the product of two $2\times2$ bisymmetric matrices is bisymmetric.
The resulting evolutionary Riccati equation $\pa_tG=dG-G\,(b G)$ in terms of the kernel
functions $g_{11}=u$ and $g_{12}=v$ is the target reaction-diffusion system 
with nonlocal nonlinearities above.
The results of our simulations are shown in Figure~\ref{fig:RDE}. 
The top two panels show the $u$ and $v$ components of the solution computed up until
time $T=0.5$ using a direct spectral integration approach. By this we mean we
solved the system of equations in Fourier space for $\widehat{u}=\widehat{u}(k,\kappa;t)$
and $\widehat{v}=\widehat{v}(k,\kappa;t)$. We used the Matlab inbuilt integrator
\texttt{ode45} to integrate in time. The middle two panels 
show the $g_{11}$ and $g_{12}$ components of the solution
computed using our Riccati approach which respectively correspond to 
$u$ and $v$. To generate the solutions $g_{11}$ and $g_{12}$ 
we solved the $2\times 2$ matrix
Fredholm equation for $g$ computing $p$ and $q$ as $2\times 2$ matrices directly
from their explicit Fourier transforms. We approximated the integral in the 
Fredholm equation using a simple Riemann rule and used the inbuilt Matlab Gaussian
elimination solver to find the solution. The bottom left panel shows the Euclidean 
norm of the difference $(u-g_{11},v-g_{12})$ for all $(x,y)\in[-L/2,L/2]^2$ 
at time $t=T$. The solutions numerically coincide and indeed for that time
$t=T$ we have $\|(u-g_{11},v-g_{12})\|_{L^\infty(\R^2;\R)}=3.6178\times10^{-5}$.
We also computed the mean values of $|u-g_{11}|$ and $|v-g_{12}|$
over the domain which are, respectively, $8.7796\times10^{-8}$
and $1.6967\times10^{-7}$. The bottom right panel shows the evolution of  
$\mathrm{det}_2(\id+Q^\prime(t))$ and also
$\|Q^\prime(t)\|_{\mathfrak J_2(\Qb;\Qb)}$ for $t\in[0,T]$.
\end{example}

\begin{figure}
  \begin{center}
  \includegraphics[width=5cm,height=5cm]{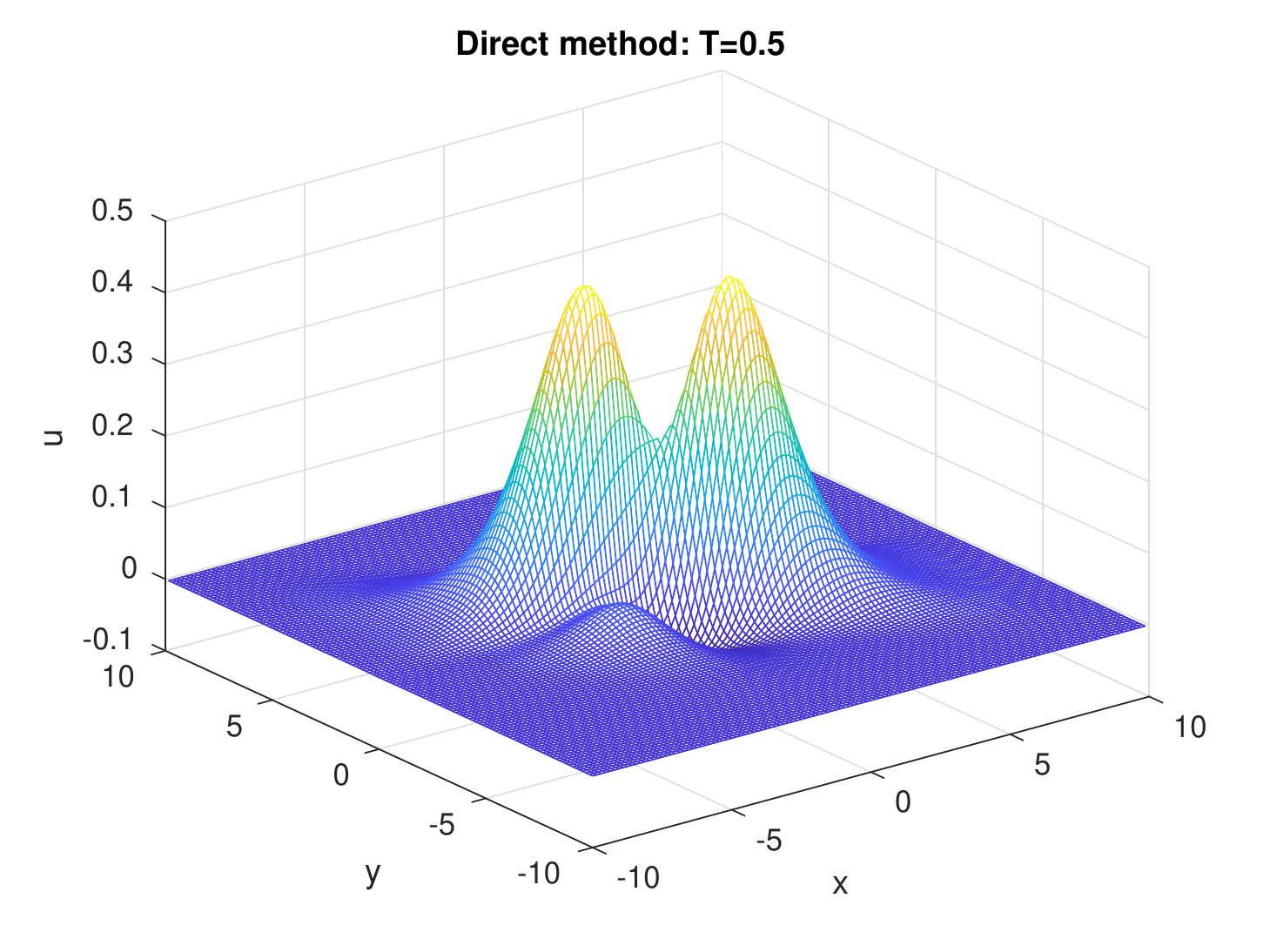}
  \includegraphics[width=5cm,height=5cm]{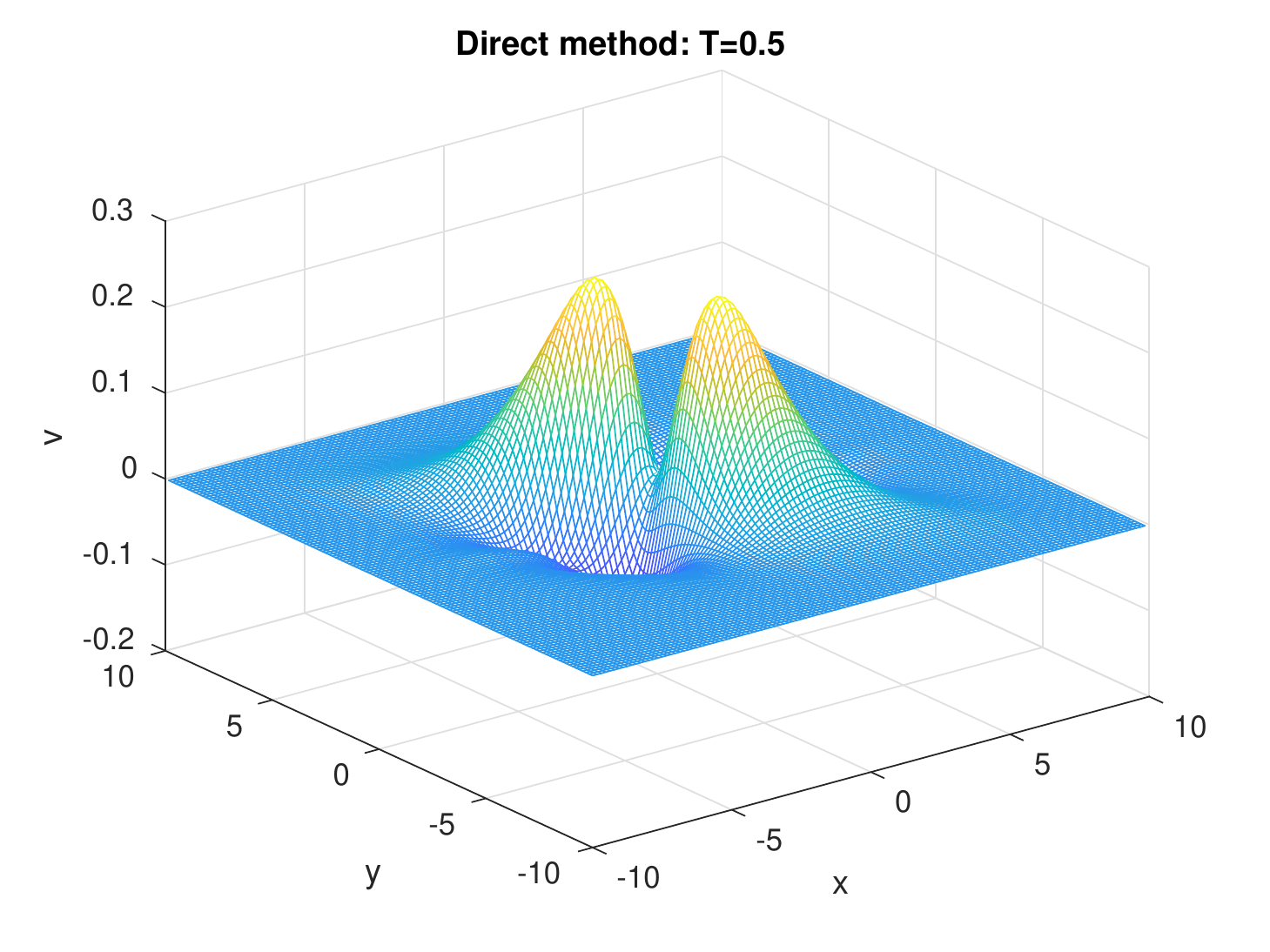}\\
  \includegraphics[width=5cm,height=5cm]{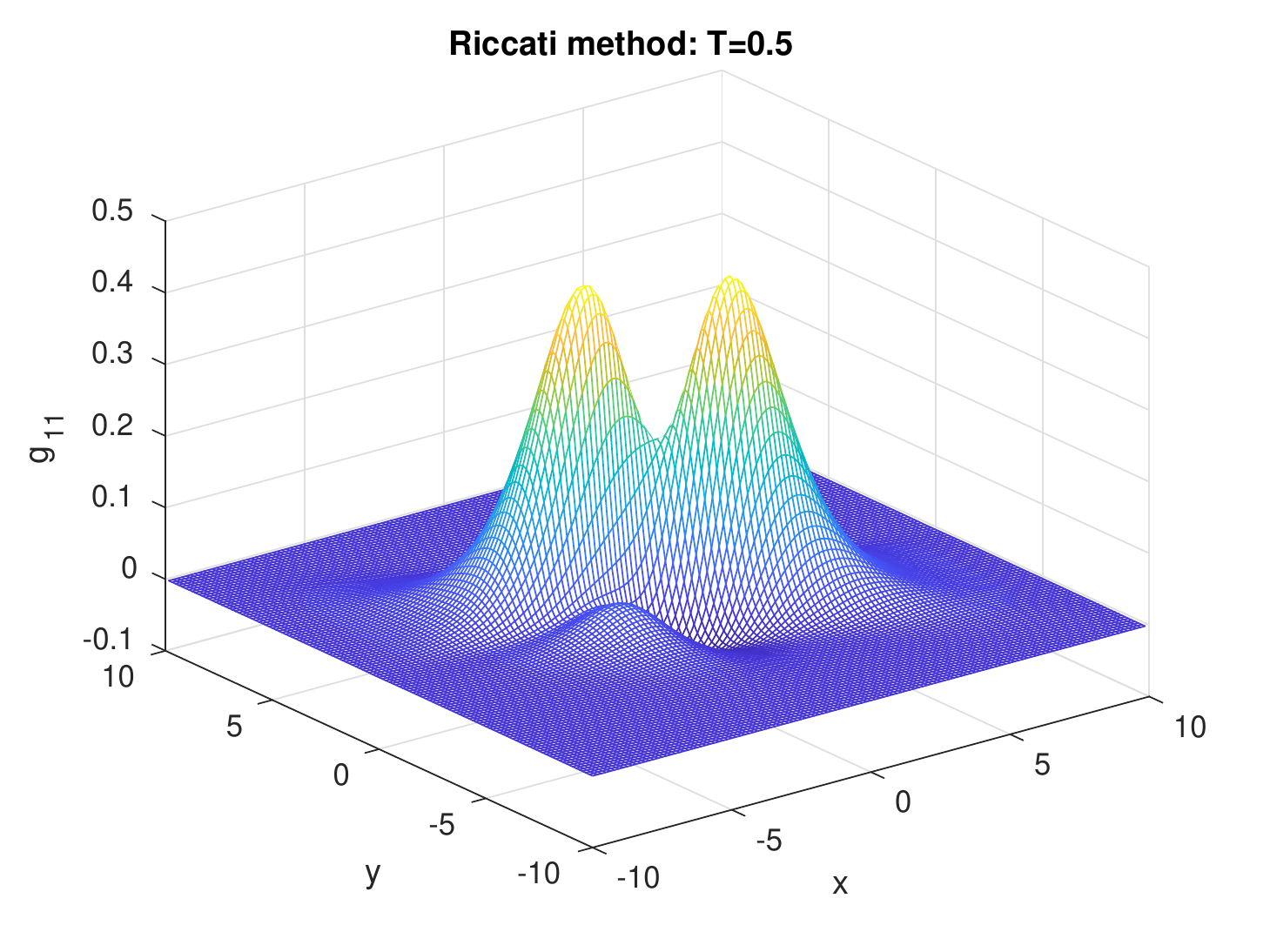}
  \includegraphics[width=5cm,height=5cm]{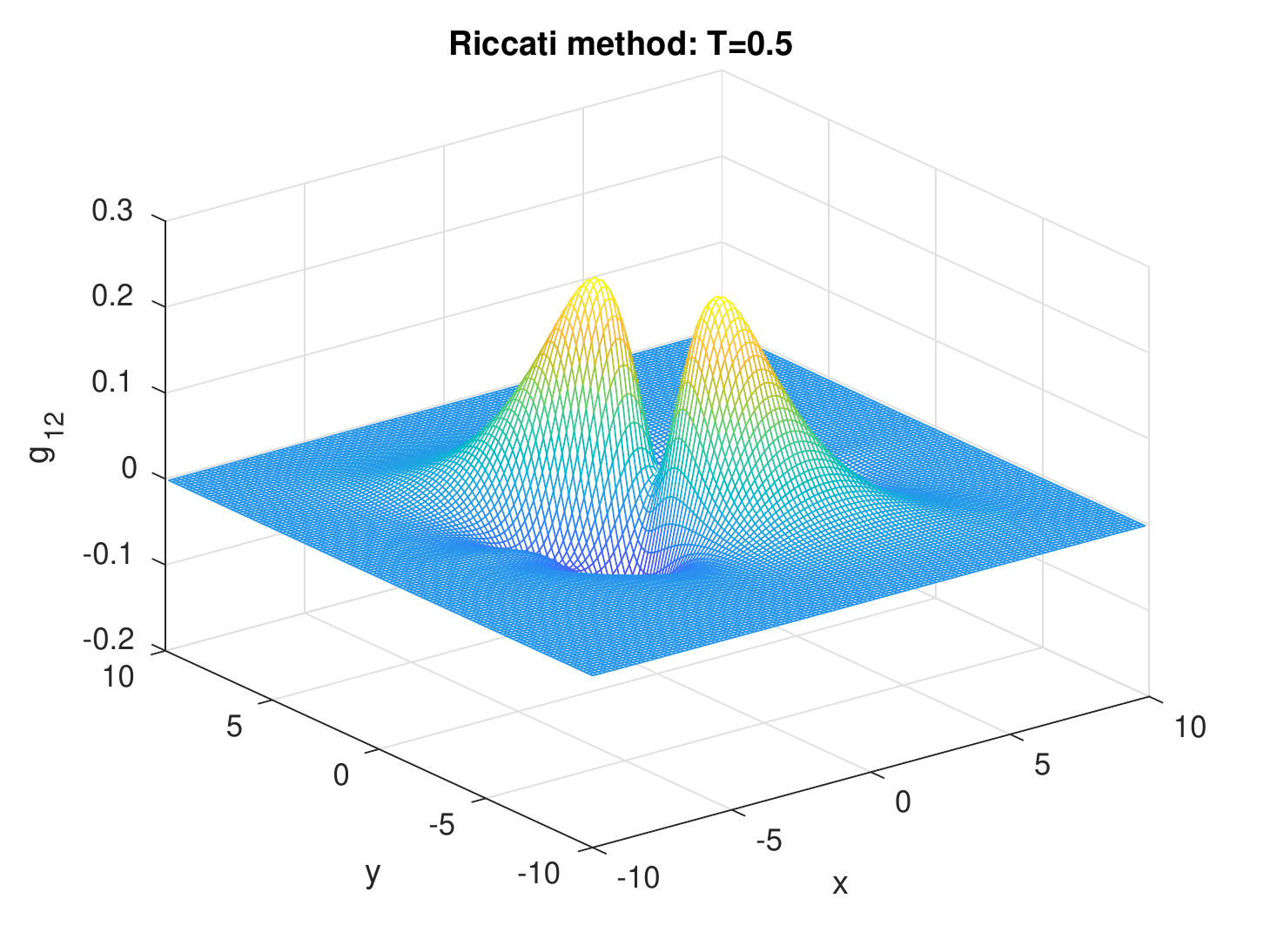}\\
  \includegraphics[width=5cm,height=5cm]{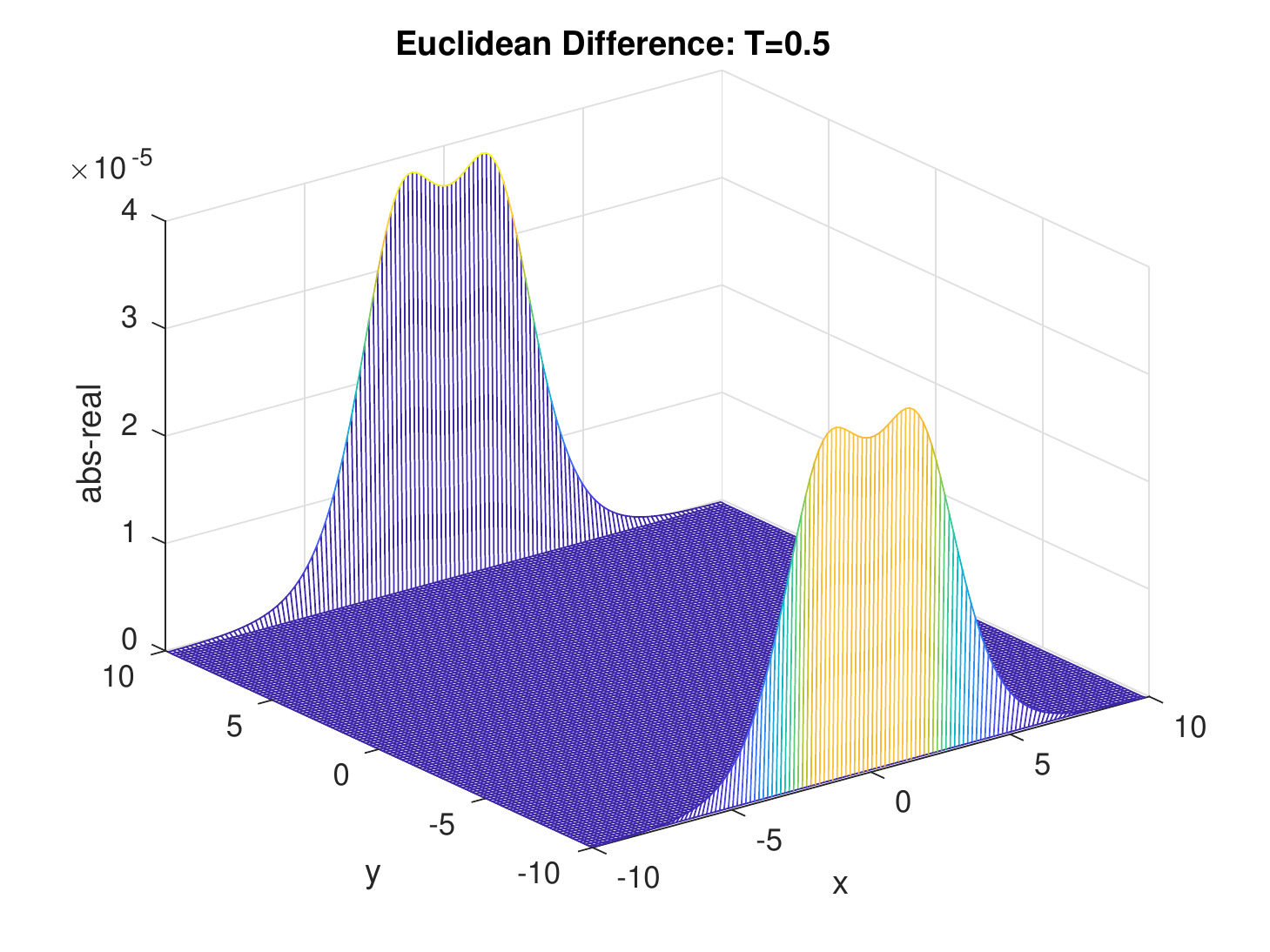}
  \includegraphics[width=5cm,height=5cm]{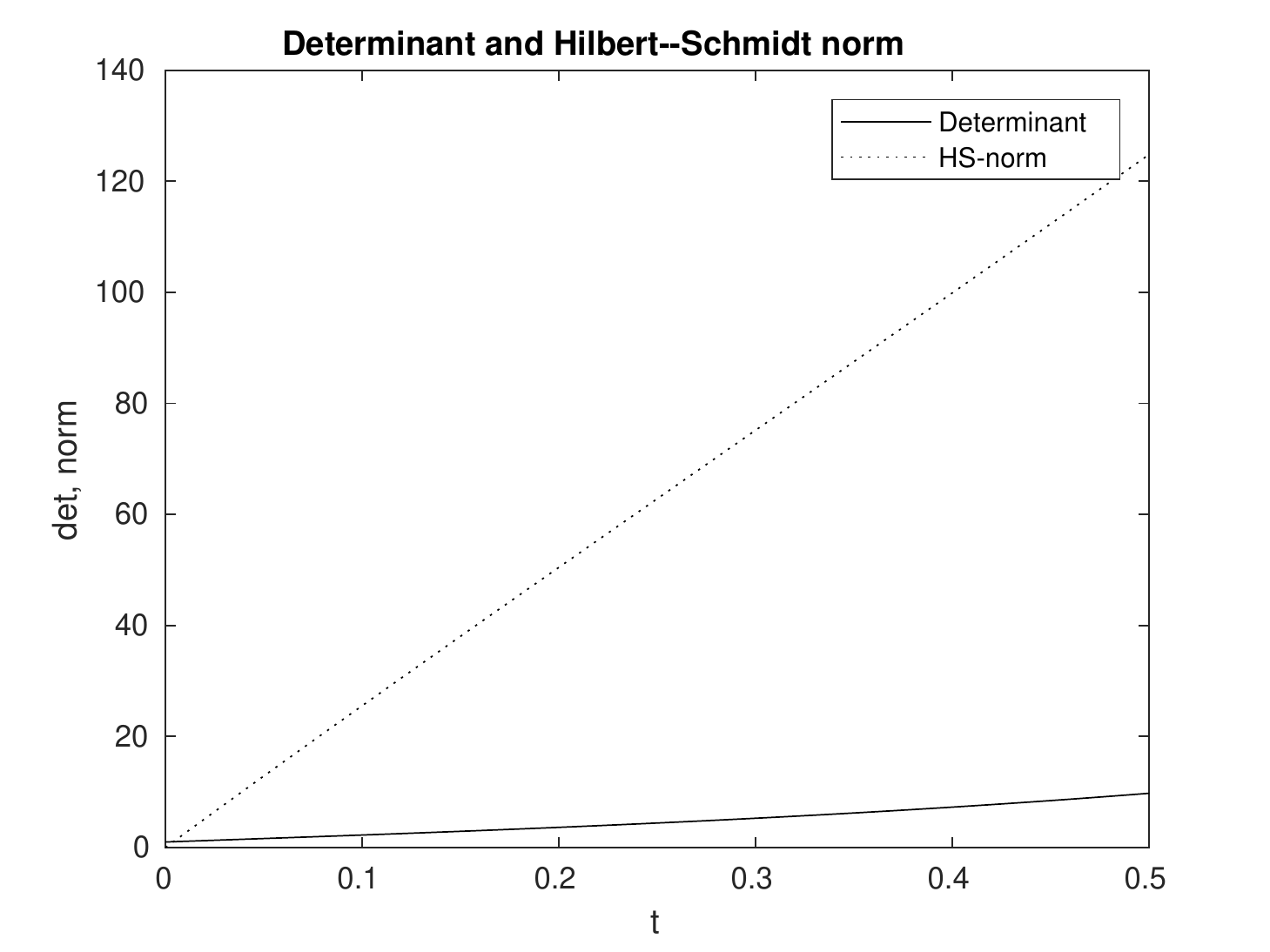}\\
  \end{center}
  \caption{We plot the solution to the nonlocal reaction-diffusion system
from Example~\ref{ex:RDE}. We used generic initial profiles 
$u_0(x,y)\coloneqq\mathrm{sech}(x+y)\,\mathrm{sech}(y)$ and
$v_0(x,y)\coloneqq\mathrm{sech}(x+y)\,\mathrm{sech}(x)$.
For time $T=0.5$, the top panels show the $u$ and $v$ components 
of the solution computed using a direct integration approach while the middle
panels show the corresponding $g_{11}$ and $g_{12}$ components 
of the solution computed using our Riccati approach.
The bottom left panel shows the Euclidean norm of 
the difference $(u-g_{11},v-g_{12})$ for all $(x,y)\in[-L/2,L/2]^2$.
The bottom right panel shows the evolution of the Fredholm Determinant 
and Hilbert--Schmidt norm associated with $Q^\prime(t)$ for $t\in[0,T]$.}
\label{fig:RDE}
\end{figure}

\begin{example}[Nonlocal Korteweg de Vries equation]\label{ex:KdV}
In this case the target equation is the nonlocal Korteweg de Vries equation
\begin{equation*}
\pa_tg=-\pa_1^3\,g-g\star (\pa_1 g),
\end{equation*}
for $g=g(x,y;t)$. Using our analysis in \S\ref{sec:quadflows} 
we thus need to set $d=-\pa_1^3$ and $b=\pa_1$. We choose an 
initial profile of the form
$g_0(x,y)\coloneqq \mathrm{sech}^2(x+y)\,\mathrm{sech}^2(y)$
and in this case  $L=40$ and $M=2^8$.
The results are shown in Figure~\ref{fig:KdV}. The top left panel
shows the solution  $g_{\mathrm{D}}$ computed up until time $T=1$ using a 
direct integration approach. By this we mean we
implemented a split-step Fourier Spectral approach modified
to deal with the nonlocal nonlinearity; we adapted the code from
that found at the Wikiwaves webpage~\cite{wikiwaves}. With the initial matrix
$\widehat{u}_0\coloneqq\widehat{g}_0$, indexed by the wavenumbers
$k$ and $\kappa$, the method is given by
(here $\mathcal{F}$ denotes the Fourier transform),
\begin{equation*}
\widehat{v}_{n}\coloneqq\exp\bigl(\Delta t\,K^3\bigr)\,\widehat{u}_n
\quad\text{and}\quad
\widehat{u}_{n+1}\coloneqq\widehat{v}_{n}+\Delta t\,h\,
\mathcal{F}\Bigl(\bigl(\mathcal{F}^{-1}(\widehat{v}_{n})\bigr)
\bigl(\mathcal{F}^{-1}(K\widehat{v}_{n})\bigr)\Bigr),
\end{equation*}
where $K$ is the diagonal matrix of Fourier coefficients $2\pi\mathrm{i}k$ and 
where the product between the two inverse Fourier transforms shown is the 
matrix product. In practice of course we used the fast Fourier transform.
Note we have chosen to approximate the nonlocal nonlinear term using a Riemann rule.
Further we used the time step $\Delta t=0.0001$.
The top right panel shows the solution $g_{\mathrm{R}}$ computed using our Riccati approach. 
By this we mean the following. We compute the explicit solutions for 
the base and auxiliary equations in this case in Fourier space in the form
\begin{equation*}
\widehat{p}(k,\kappa;t)=\mathrm{e}^{t(2\pi\mathrm{i}k)^3}\,\widehat{g}_0(k,\kappa)
\quad\text{and}\quad
\widehat{q}^\prime(k,\kappa;t)=(2\pi\mathrm{i}k)
\frac{\bigl(\mathrm{e}^{t(2\pi\mathrm{i}k)^3}-1\bigr)}{(2\pi\mathrm{i}k)^3}\,
\widehat{g}_0(k,\kappa).
\end{equation*}
Recall $q^\prime$ is the kernel associated with $Q^\prime=Q-\id$. After computing
the inverse Fourier transforms of these expressions we then solved the 
Fredholm equation, i.e.\/ the Riccati relation, for $\widehat{g}=\widehat{g}(x,y;t)$ 
numerically. There are three sources of error in this computation. The first is 
the wavenumber cut-off and inverse fast Fourier transform required to compute
$p=p(x,y;t)$ and $q'=q'(x,y;t)$ respectively from $\widehat{p}$ 
and $\widehat{q}^\prime$ above.
The second is in the choice of integral approximation in the Fredholm equation. 
We used a simple Riemann rule. The third is the error in solving the corresponding
matrix equation representing the Fredholm equation which is 
that corresponding to the error for Matlab's inbuilt Gaussian 
elimination solver. The bottom left panel shows the absolute value of 
$g_{\mathrm{D}}-g_{\mathrm{R}}$. Up to computation error,
the solutions naturally coincide, and indeed 
$\|g_{\mathrm{D}}-g_{\mathrm{R}}\|_{L^\infty(\R^2;\R)}=4.8871\times 10^{-5}$.
The bottom right panel shows the evolution of  
$\mathrm{det}_2(\id+Q^\prime(t))$ and also
$\|Q^\prime(t)\|_{\mathfrak J_2(\Qb;\Qb)}$ for $t\in[0,T]$.
\end{example}

\begin{figure}
  \begin{center}
  \includegraphics[width=5cm,height=5cm]{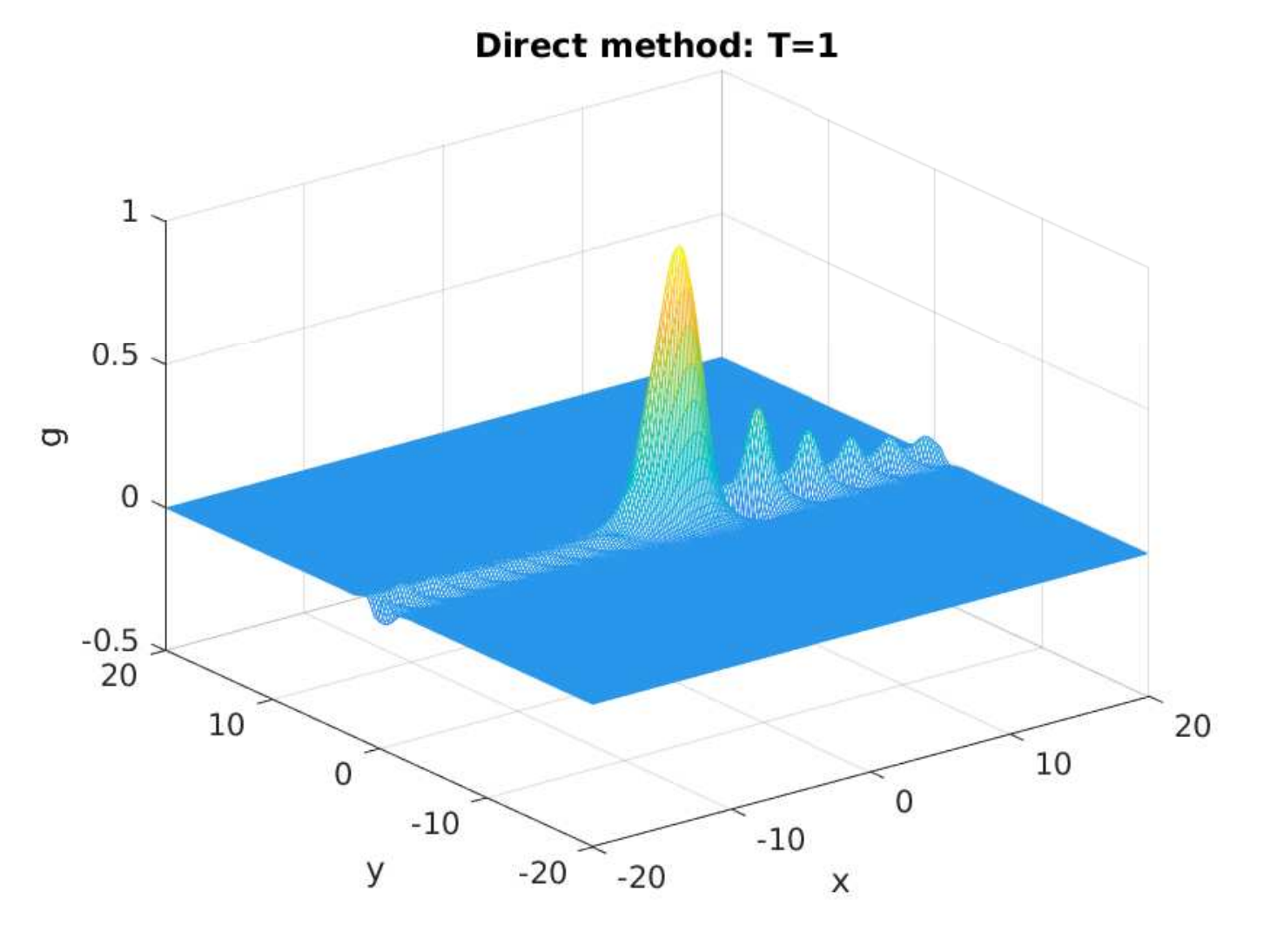}
  \includegraphics[width=5cm,height=5cm]{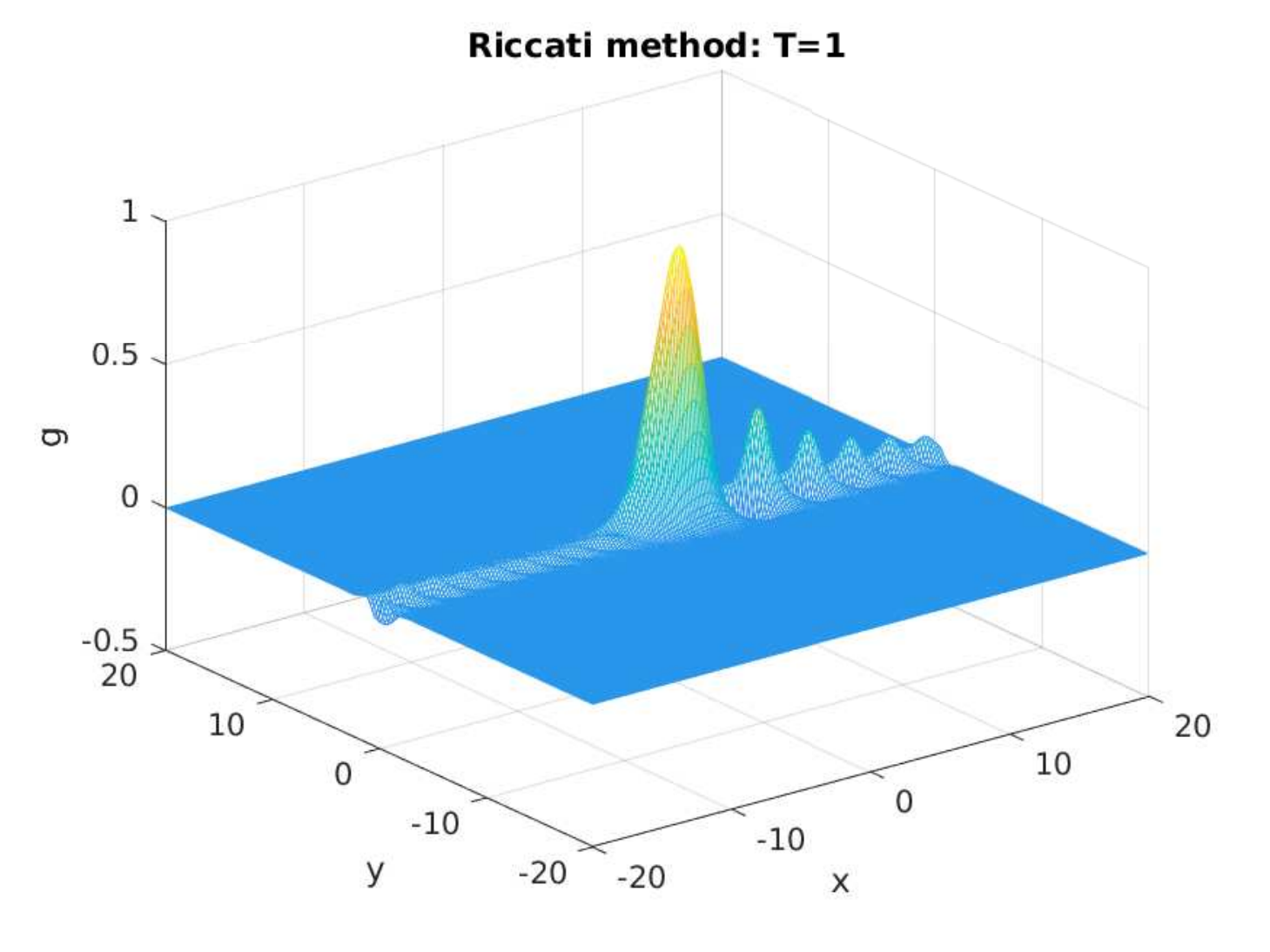}\\
  \includegraphics[width=5cm,height=5cm]{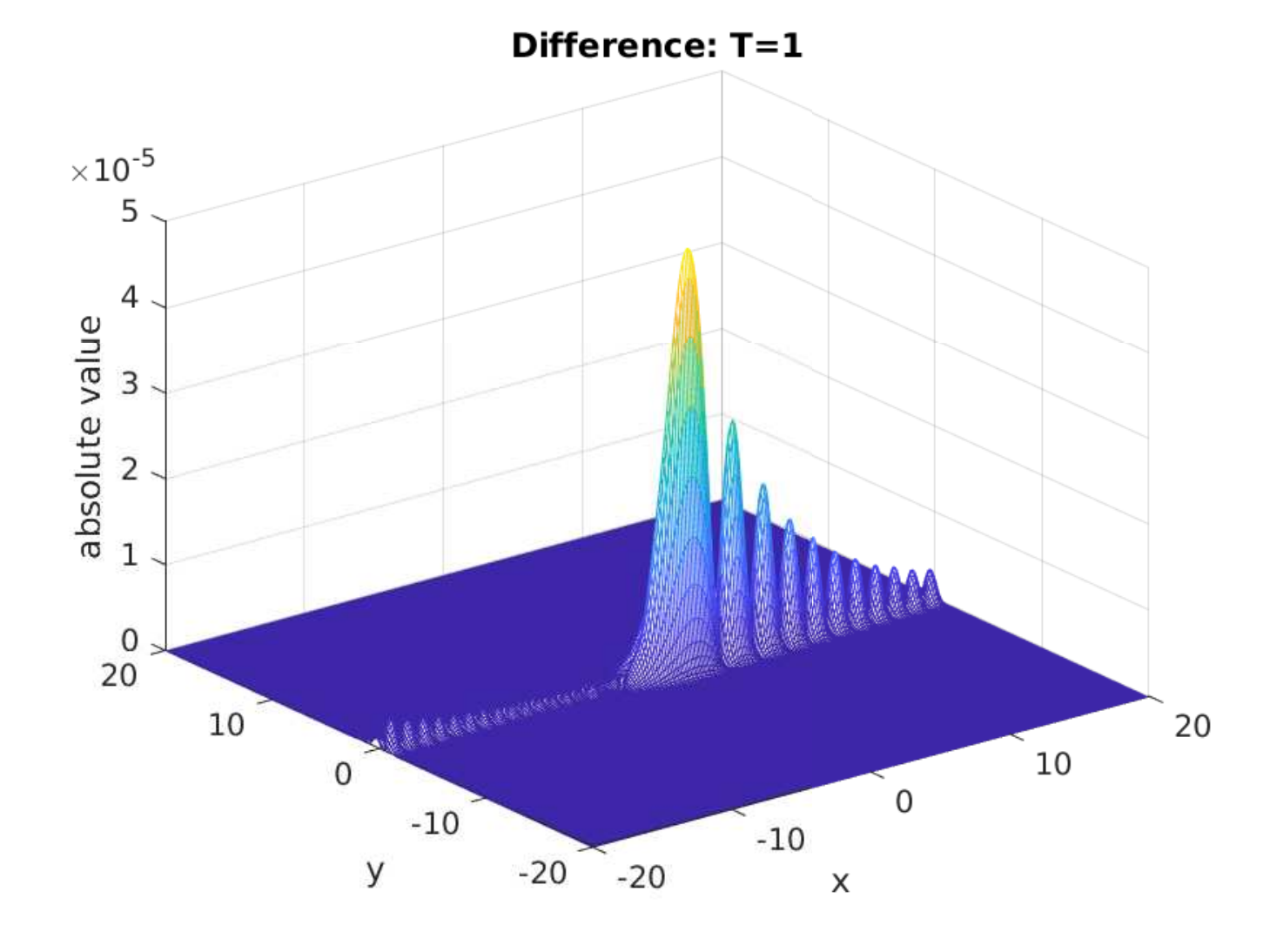}
  \includegraphics[width=5cm,height=5cm]{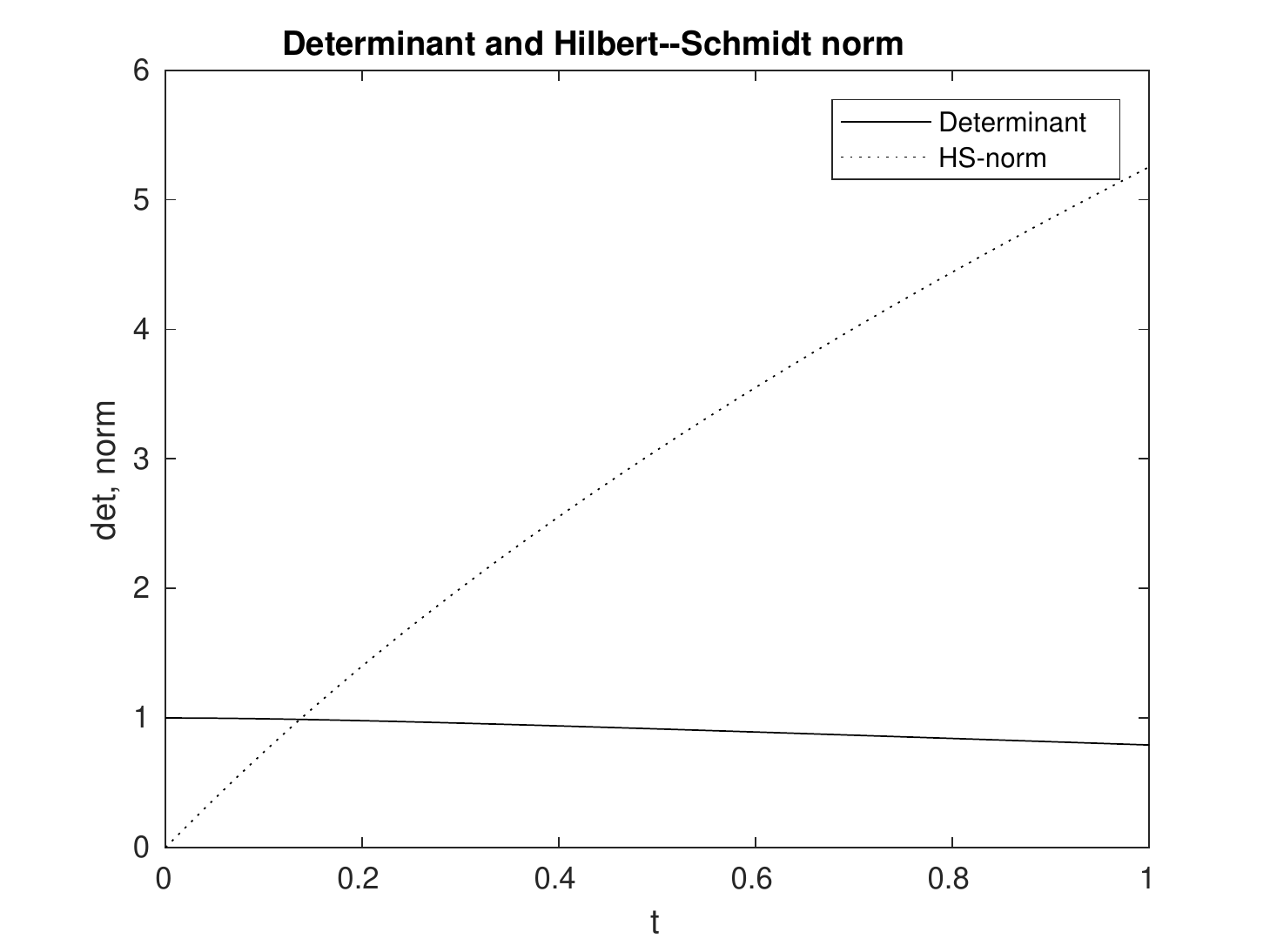}
  \end{center}
  \caption{We plot the solution to the nonlocal Korteweg de Vries equation
from Example~\ref{ex:KdV}. We used the generic initial profile 
$g_0(x,y)\coloneqq\mathrm{sech}^2(x+y)\,\mathrm{sech}^2(y)$.
For time $T=1$, the top panels show the solution computed using a 
direct integration approach (left) and the corresponding solution 
computed using our Riccati approach (right). 
The bottom left panel shows the absolute value of the difference
of the two computed solutions. The bottom right panel shows the 
evolution of the Fredholm Determinant and Hilbert--Schmidt norm
associated with $Q^\prime(t)$ for $t\in[0,T]$.}
\label{fig:KdV}
\end{figure}

\begin{example}[Nonlocal nonlinear Schr\"odinger equation]\label{ex:NLS}
In this case the target equation is the nonlocal nonlinear Sch\"odinger equation
\begin{equation*}
\mathrm{i}\pa_tg=\pa_1^2\,g+g\star g\star g^\dag,
\end{equation*}
for $g=g(x,y;t)$. In our analysis in \S\ref{sec:generalflows} 
we thus need to set $h(x)=x^2$ and $f(x)=x$.
Further for computations we take the initial profile 
to be $g_0(x,y)\coloneqq \mathrm{sech}(x+y)\,\mathrm{sech}(y)$
and in this case $L=20$ and $M=2^8$.
The results are shown in Figure~\ref{fig:NLS}. The top two panels
show the real and imaginary parts of the solution $g_{\mathrm{D}}$ computed up until
time $T=0.02$ using a direct integration approach. By this we mean we
implemented a split-step Fourier transform approach slightly modified
to deal with the nonlocal nonlinearity; see 
Dutykh, Chhay and Fedele~\cite[p.~225]{DCF}. The middle two panels 
show the real and imaginary parts of the solution $g_{\mathrm{R}}$ computed using our
Riccati approach. By this we mean, given the explicit solution for
$\widehat{p}=\widehat{p}(k,\kappa;t)$ in terms of $\widehat{g}_0$, 
we numerically evaluated $\widehat{q}=\widehat{q}(k,\kappa;t)$
using the exponential form from Lemma~\ref{lemma:explicitlinearodddegree}. 
In practice this consists of computing a large matrix exponential, 
our first source of error. We then solved the the Riccati relation 
in Fourier space when it takes the form 
$\widehat{p}(k,\kappa;t)=\int_{\R}\widehat{g}(k,\nu;t)\,\widehat{q}(\nu,\kappa;t)\,\rd\nu$
for $\widehat{g}=\widehat{g}(k,\kappa;t)$. We solved this Fredholm
equation numerically and recovered $g=g(x,y;t)$ 
as the inverse Fourier transform of $\widehat{g}=\widehat{g}(k,\kappa;t)$.
There are three further sources of error in this computation. 
The first is in the choice of integral approximation on the right-hand side. 
We used a simple Riemann rule. The second is the error in solving the corresponding
matrix equation representing the Fredholm equation which is 
that corresponding to the error for Matlab's in build Gaussian 
elimination solver. The third is in computing the inverse
fast Fourier transform for the solution. The bottom left panel shows 
$|g_{\mathrm{D}}-g_{\mathrm{R}}|$ for all $(x,y)\in[-L/2,L/2]^2$ at time $t=T$. 
Up to computation error, the solutions coincide, and we have  
$\|g_{\mathrm{D}}-g_{\mathrm{R}}\|_{L^\infty(\R^2;\C)}=2.6932\times 10^{-5}$.
The bottom right panel shows the evolution of  
$\mathrm{det}_2\bigl(\widehat{Q}(t)\bigr)$ for $t\in[0,T]$, i.e.\/ the 
Fredholm determinant of the Fourier transform $\widehat{q}$ of the 
kernel $q$ associated with $Q$. Not too surprisingly we observe 
$|\mathrm{det}_2\bigl(\widehat{Q}(t)\bigr)|=1$ for all $t\in[0,T]$. 
\end{example}

\begin{figure}
  \begin{center}
  \includegraphics[width=5cm,height=5cm]{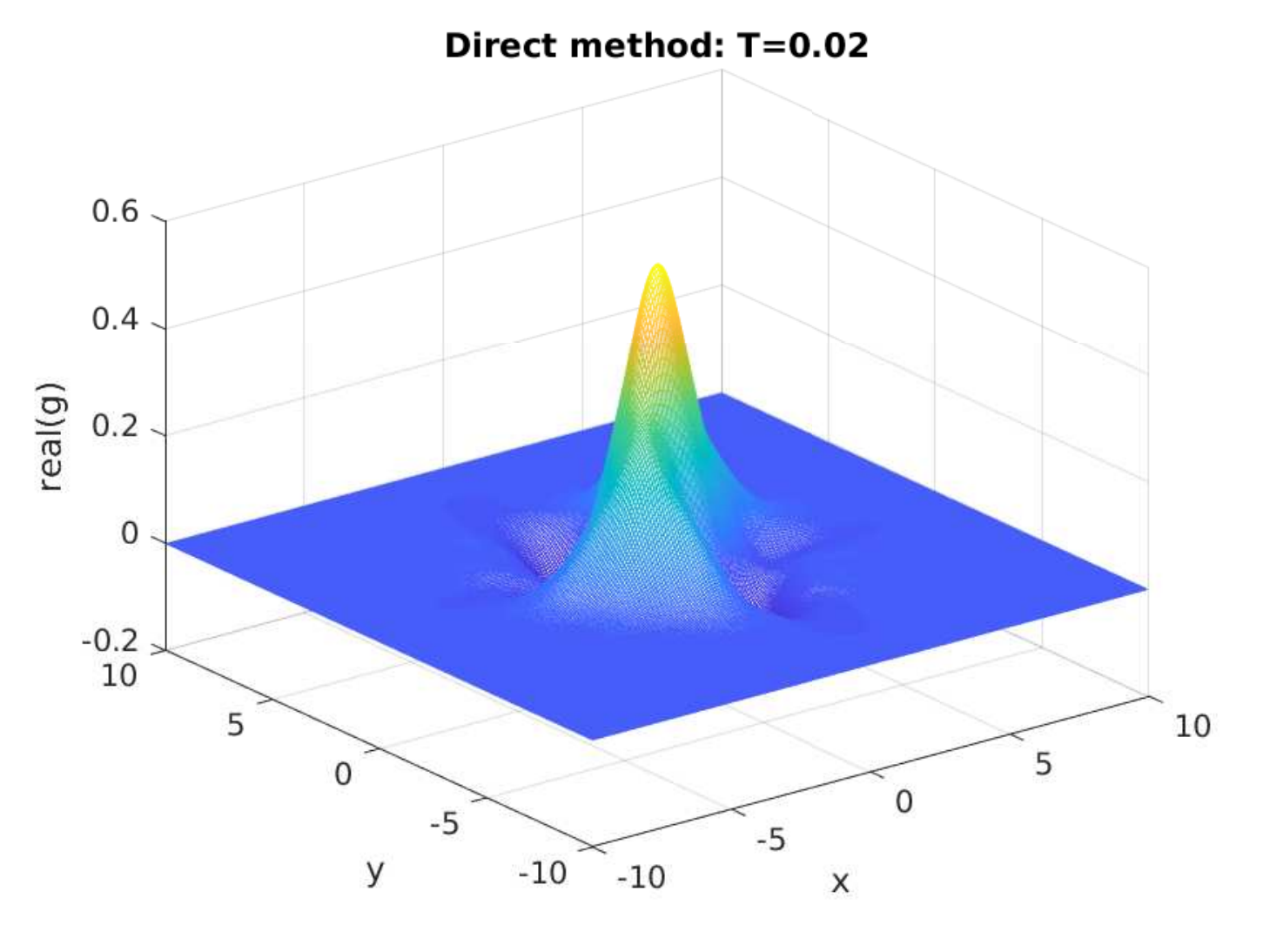}
  \includegraphics[width=5cm,height=5cm]{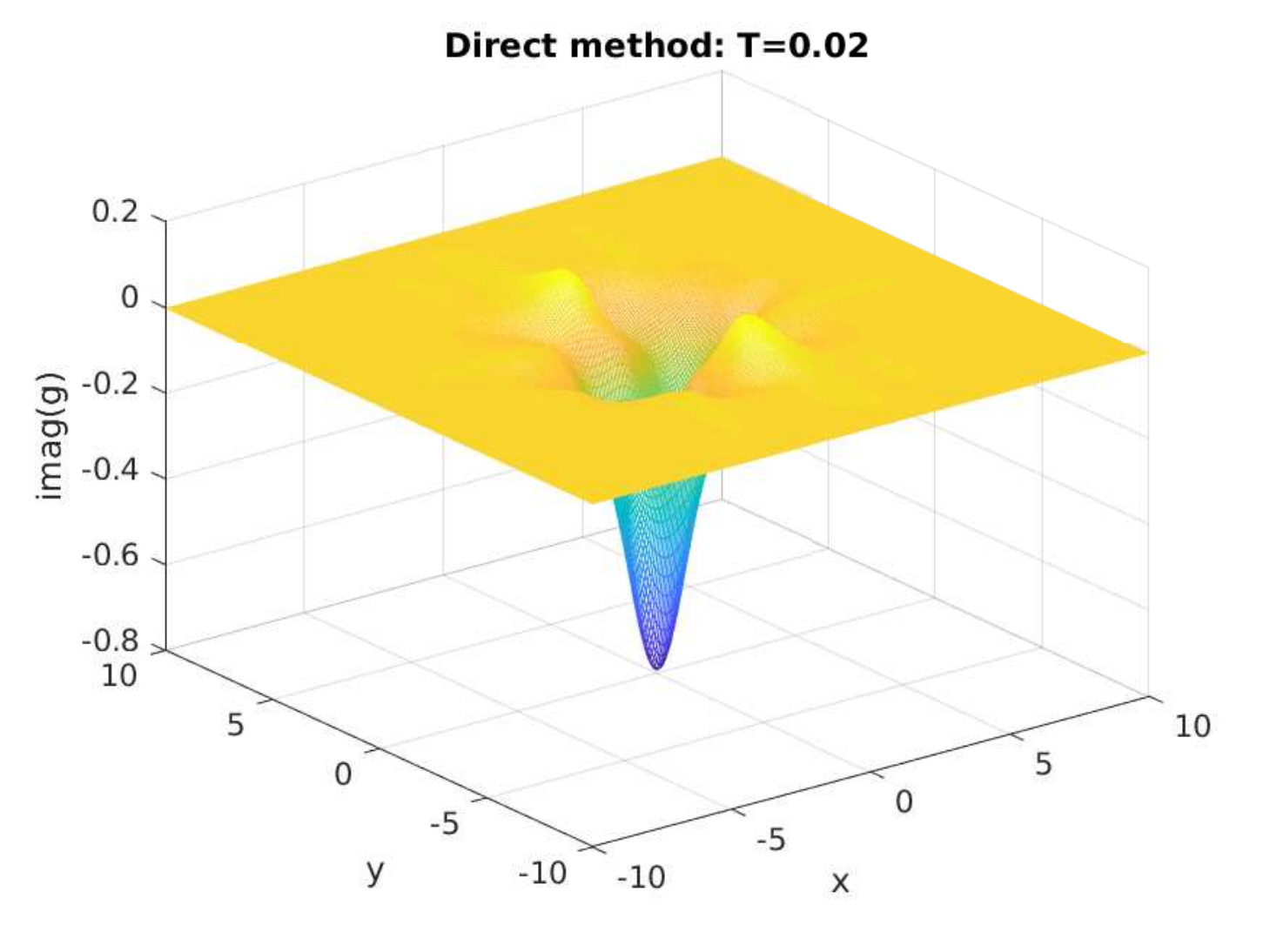}\\
  \includegraphics[width=5cm,height=5cm]{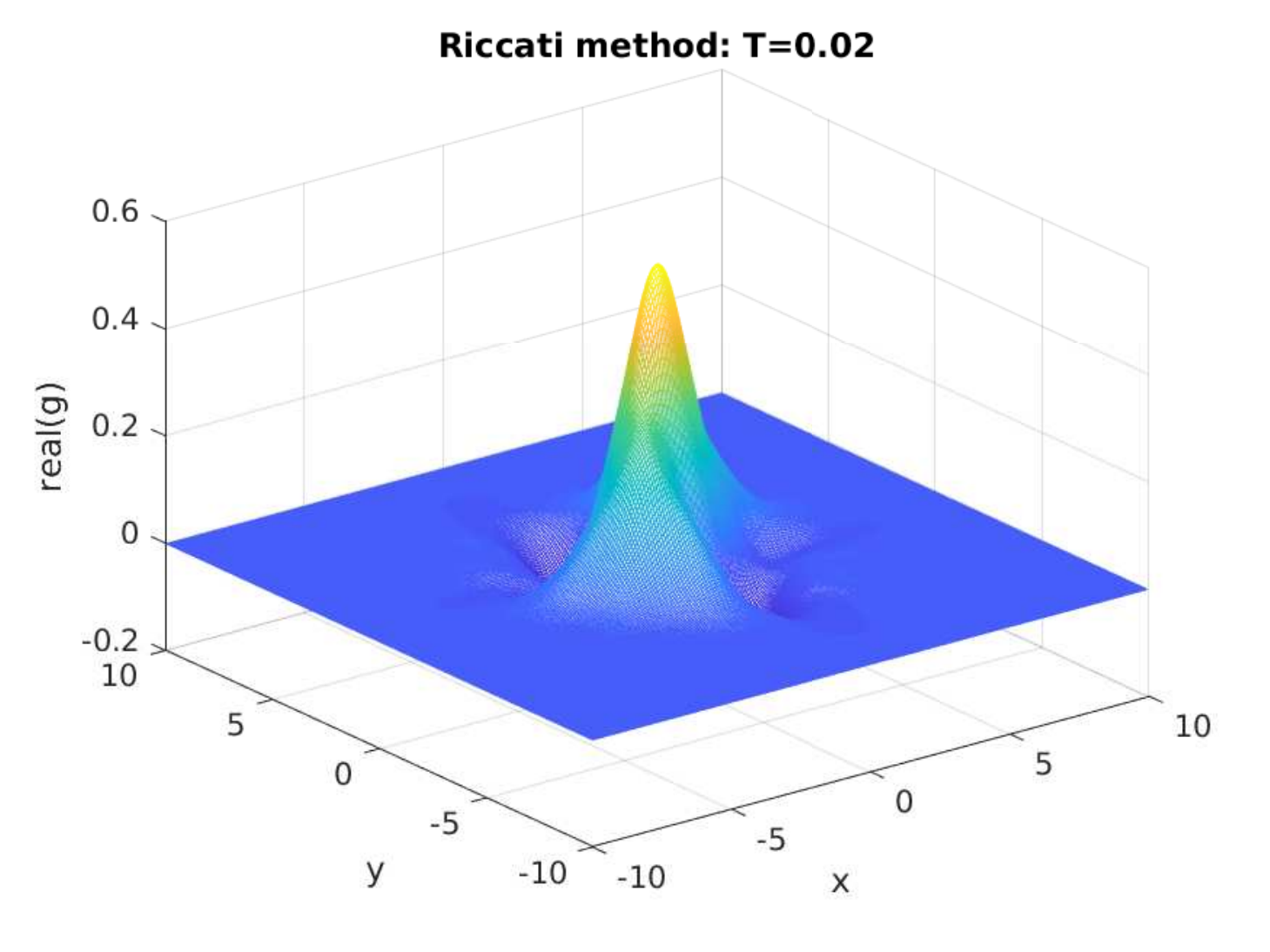}
  \includegraphics[width=5cm,height=5cm]{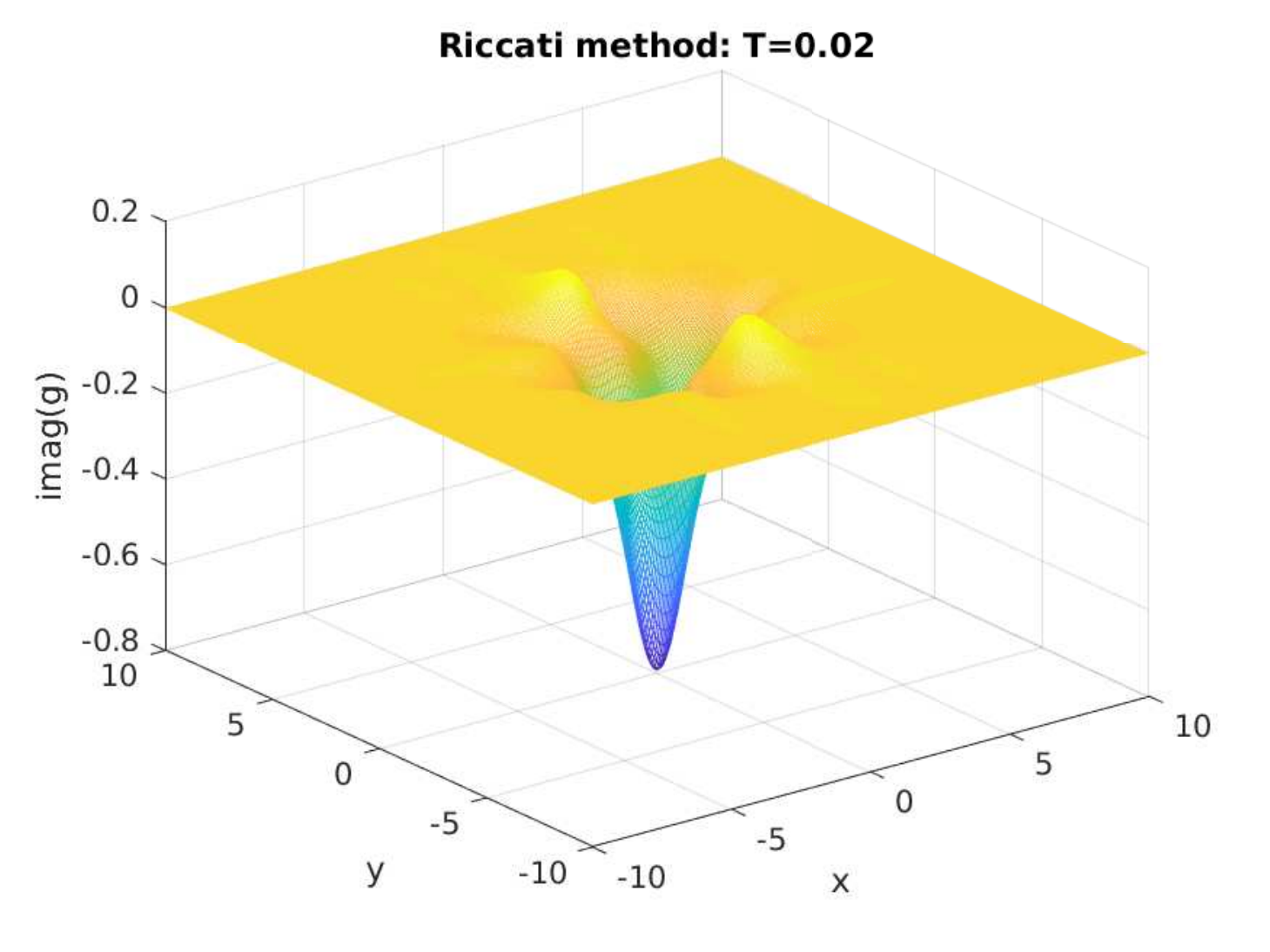}\\
  \includegraphics[width=5cm,height=5cm]{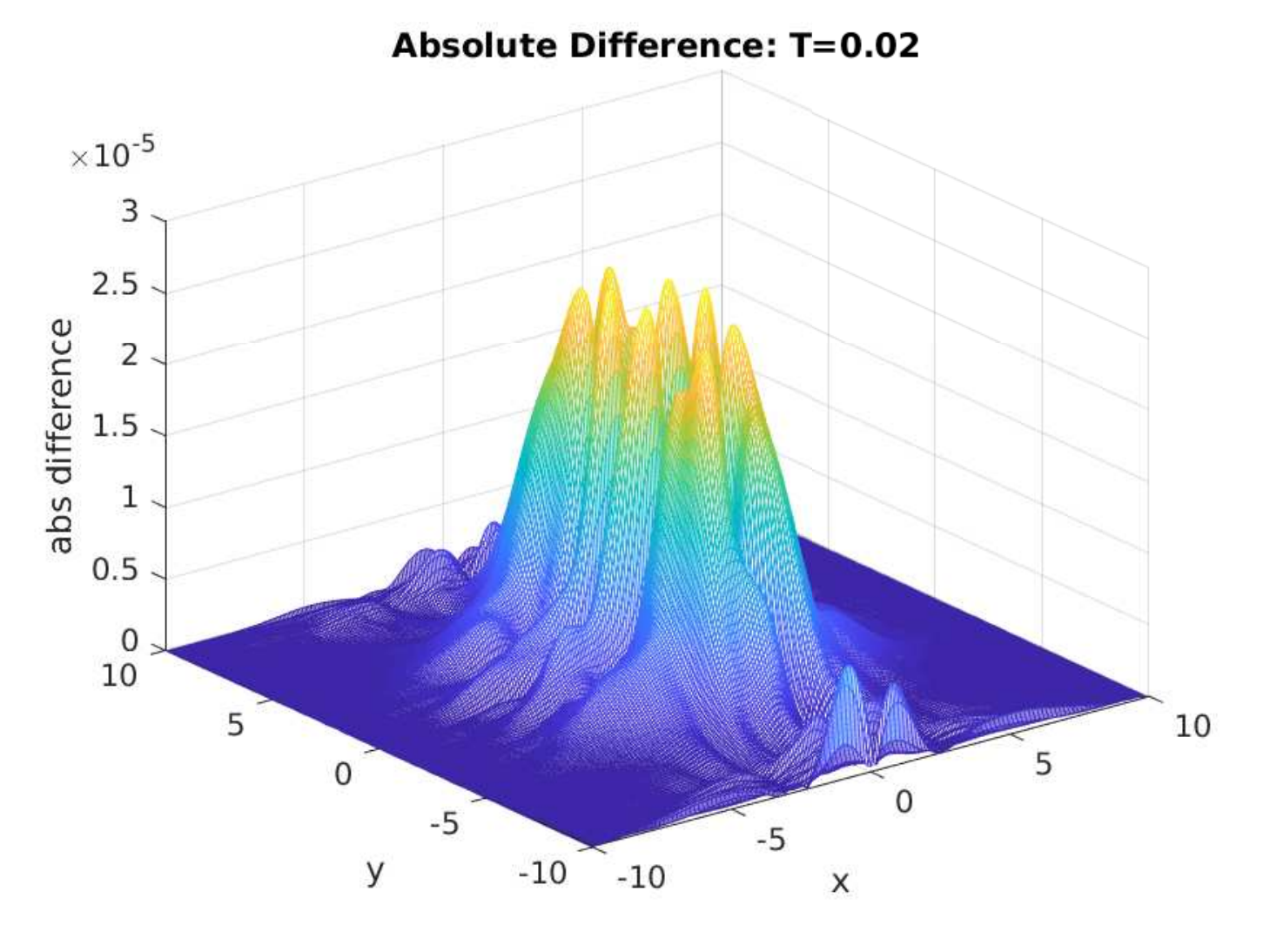}
  \includegraphics[width=5cm,height=5cm]{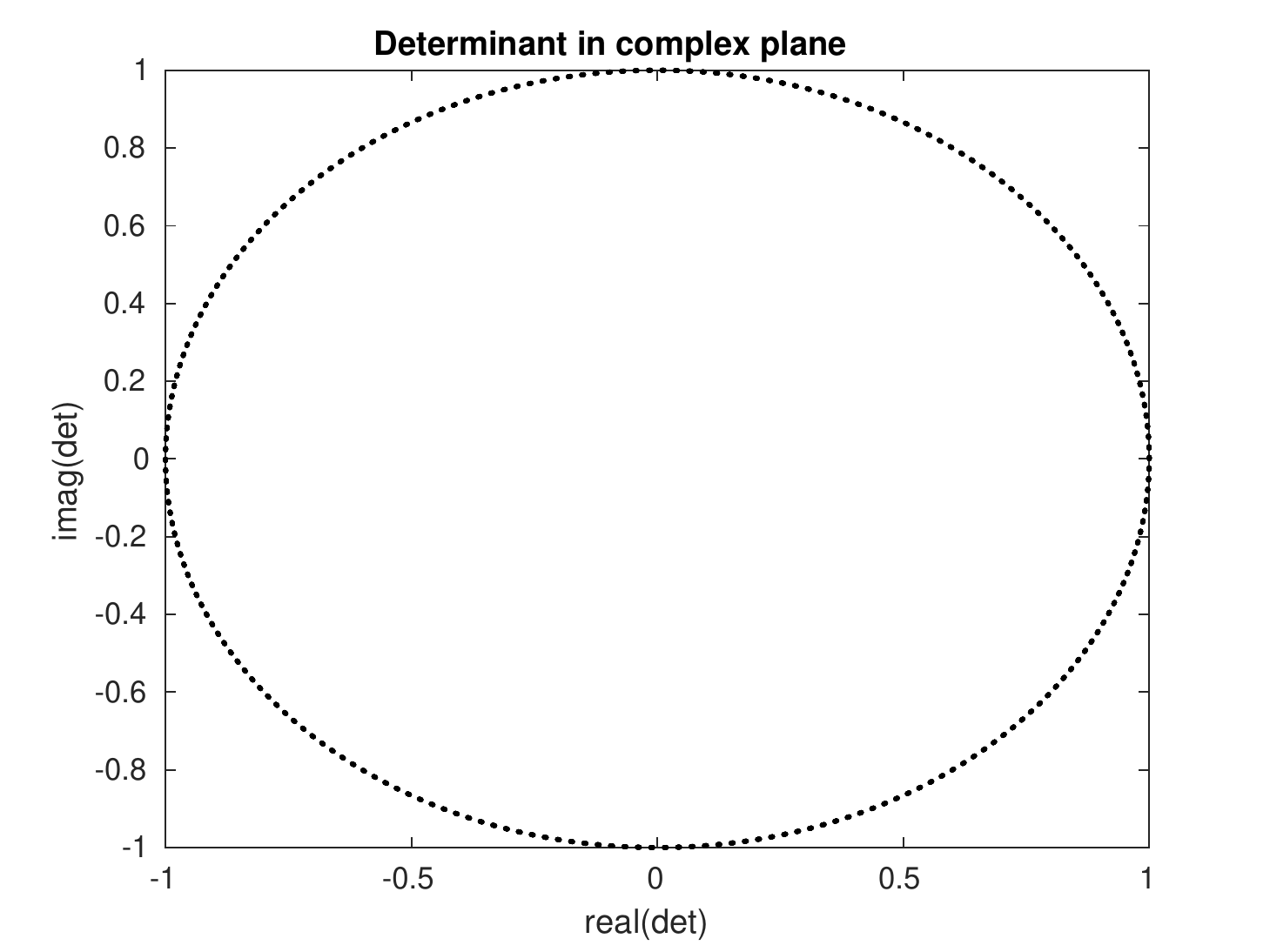}\\
  \end{center}
  \caption{We plot the solution to the cubic nonlocal nonlinear
Sch\"odinger equation from Example~\ref{ex:NLS}. 
We used a generic initial profile 
$g_0(x,y)\coloneqq\mathrm{sech}(x+y)\,\mathrm{sech}(y)$.
For time $T=0.02$, the top panels show the real and imaginary parts 
of the solution computed using a direct integration approach while the middle
panels show the corresponding real and imaginary parts 
of the solution computed using our Riccati approach. The bottom left 
panel show the magnitude of the difference between the 
two computed solutions. The bottom right panel shows the evolution
of the Fourier transform of the Fredholm determinant associated
with $\widehat{Q}(t)$ for $t\in[0,T]$ in the complex plane.}
\label{fig:NLS}
\end{figure}

\begin{example}[Fourth order NLS with nonlocal sinusoidal nonlinearity]\label{ex:genNLS}
In this case the target equation is the fourth order nonlocal nonlinear Sch\"odinger equation
\begin{equation*}
\mathrm{i}\pa_tg=\pa_1^4\,g+g\star\sin^\star\bigl(g\star g^\dag\bigr),
\end{equation*}
for $g=g(x,y;t)$. In our analysis in \S\ref{sec:generalflows} 
we thus need to set $h(x)=x^4$ and $f(x)=\sin(x)$.
The initial profile is $g_0(x,y)\coloneqq \mathrm{sech}(x+y)\,\mathrm{sech}(y)$,
as previously and in this case $L=20$ and $M=2^8$.
The results are shown in Figure~\ref{fig:genNLS}. The top two panels
show the real and imaginary parts of the solution $g_{\mathrm{D}}$ computed up until
time $T=0.2$ using a direct integration approach. By this we mean we implemented the
split-step Fourier transform approach as in the last example, slightly modified
to deal with the sinusoidal nonlinearity, and with time step $\Delta t=0.0001$. 
The middle two panels show the real and imaginary parts of the 
solution $g_{\mathrm{R}}$ computed using our Riccati approach. 
Again by this we mean, given the explicit solution for
$\widehat{p}=\widehat{p}(k,\kappa;t)$ in terms of $\widehat{g}_0$, 
we numerically evaluated $\widehat{q}=\widehat{q}(k,\kappa;t)$
using the exponential form from Lemma~\ref{lemma:explicitlinearodddegree},
now including the sinusoidal form for $f$. We then solved for 
$\widehat{g}=\widehat{g}(k,\kappa;t)$ and so forth, as described in
the last example. The bottom left panel shows 
$|g_{\mathrm{D}}-g_{\mathrm{R}}|$ for all $(x,y)\in[-L/2,L/2]^2$ at time $t=T$. 
Thus again, up to computation error, the solutions naturally coincide
with $\|g_{\mathrm{D}}-g_{\mathrm{R}}\|_{L^\infty(\R^2;\C)}=5.2793\times 10^{-6}$.
As in the last example, the bottom right panel shows the evolution of  
$\mathrm{det}_2\bigl(\widehat{Q}(t)\bigr)$ for $t\in[0,T]$.
Again we observe that $|\mathrm{det}_2\bigl(\widehat{Q}(t)\bigr)|=1$ 
for all $t\in[0,T]$. 
\end{example}

\begin{figure}
  \begin{center}
  \includegraphics[width=5cm,height=5cm]{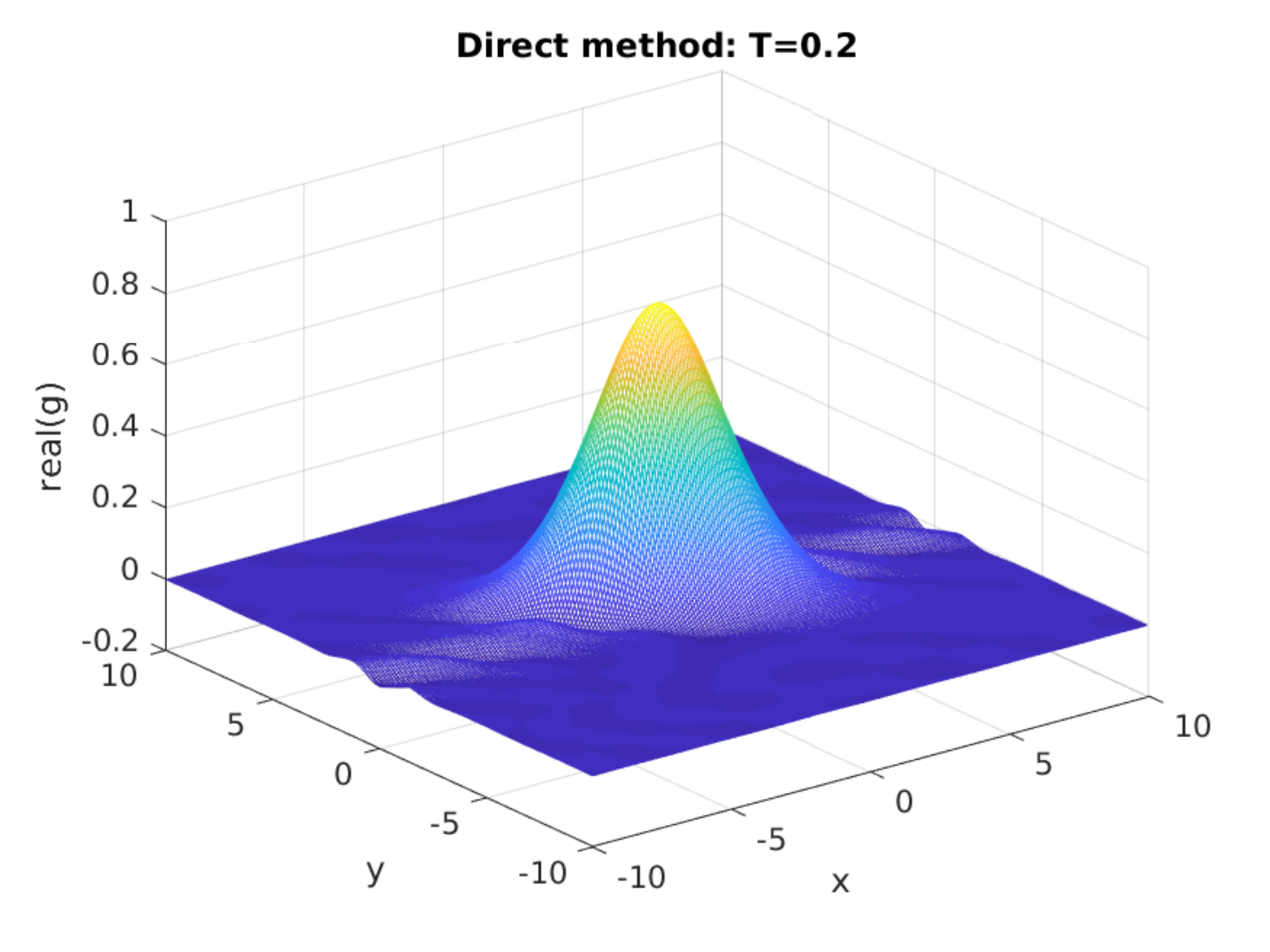}
  \includegraphics[width=5cm,height=5cm]{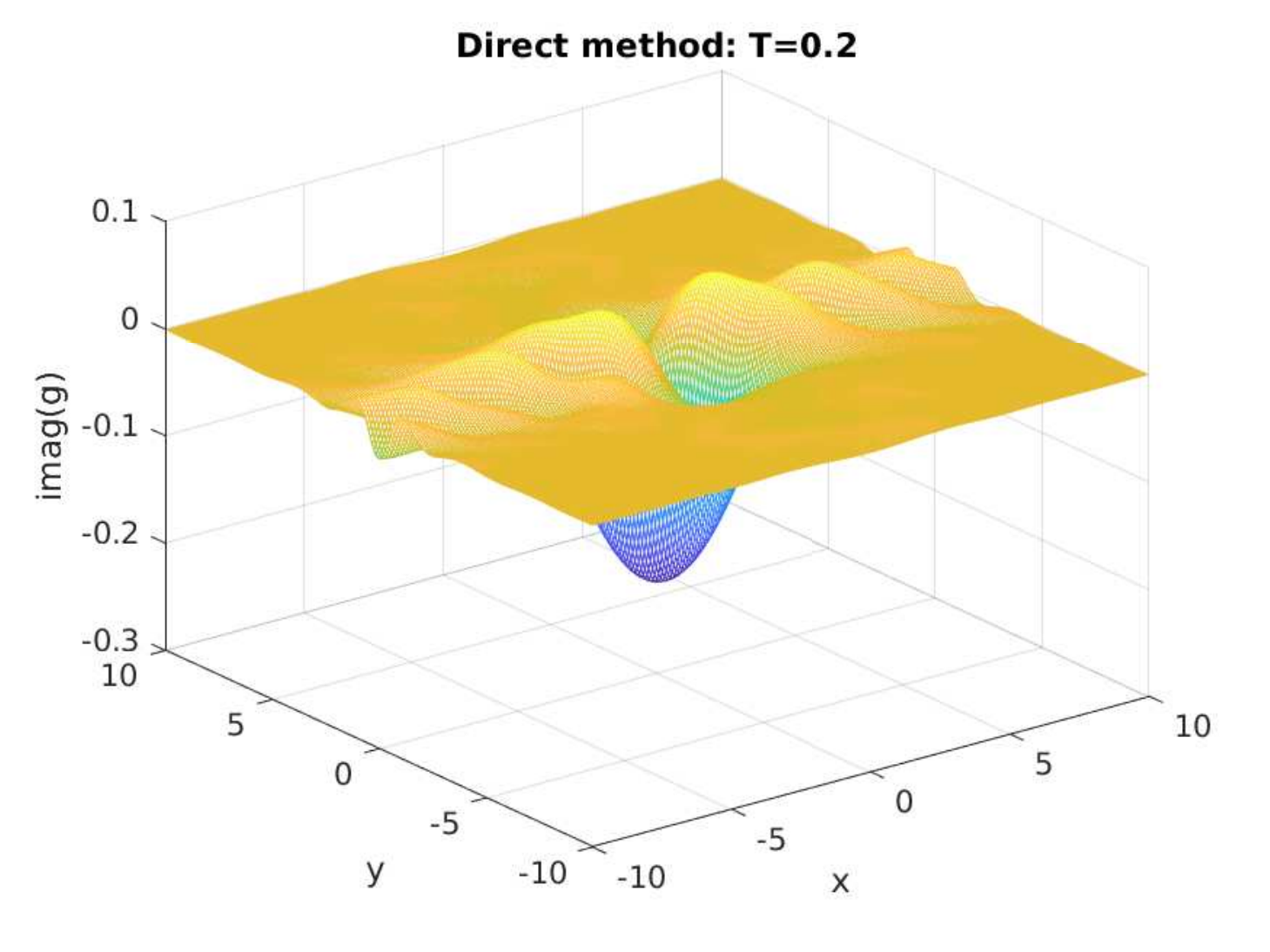}\\
  \includegraphics[width=5cm,height=5cm]{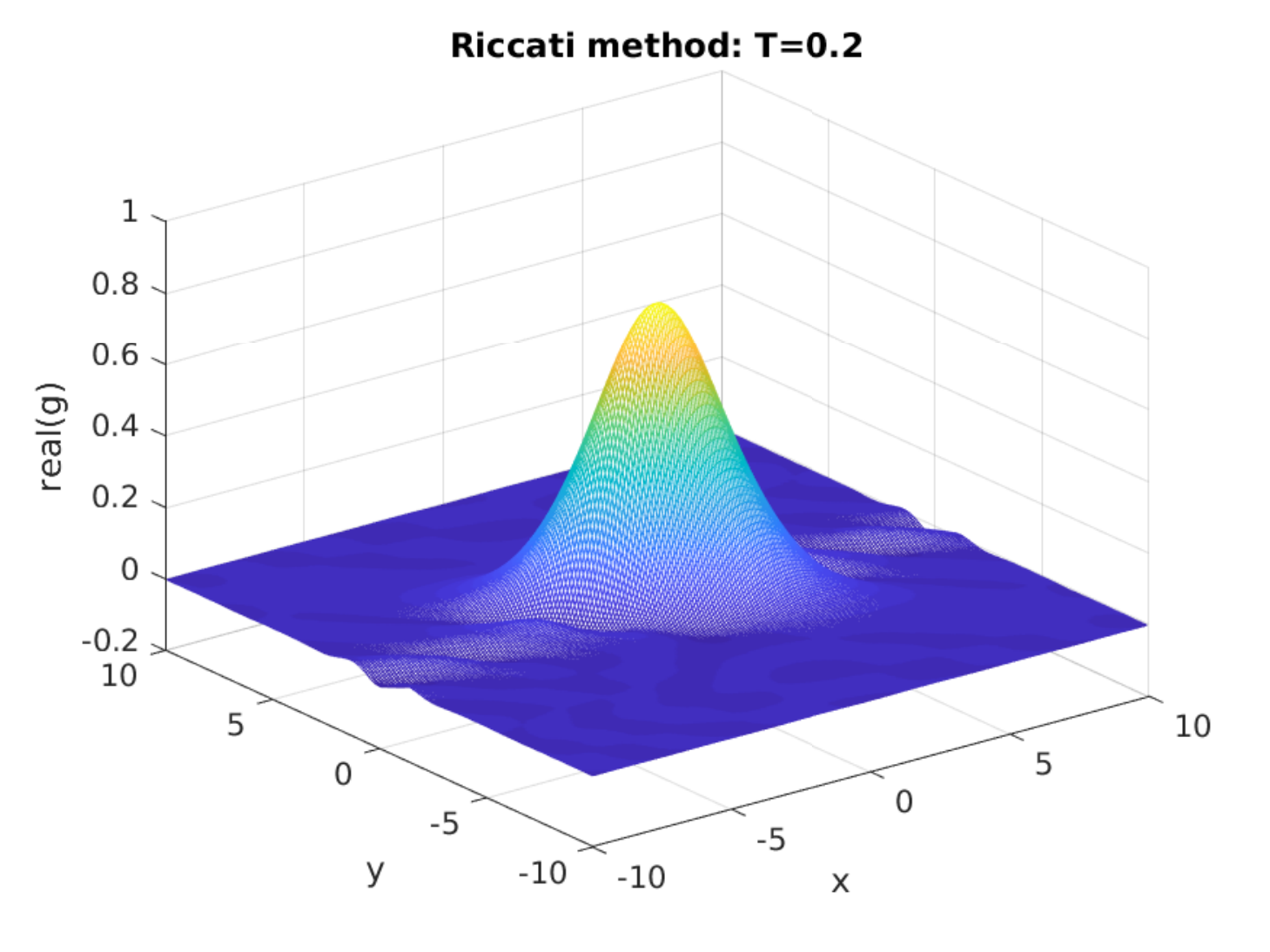}
  \includegraphics[width=5cm,height=5cm]{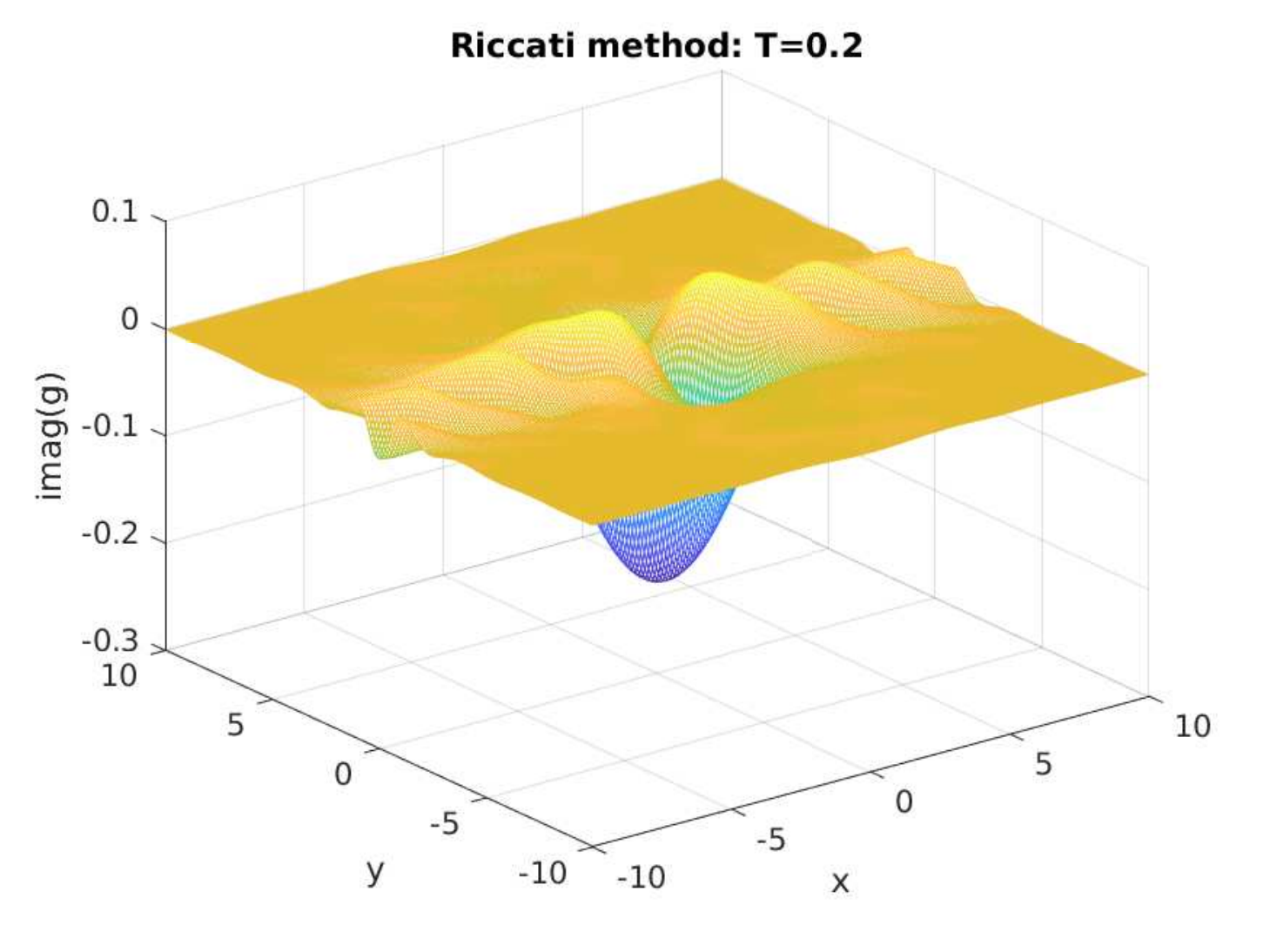}\\
  \includegraphics[width=5cm,height=5cm]{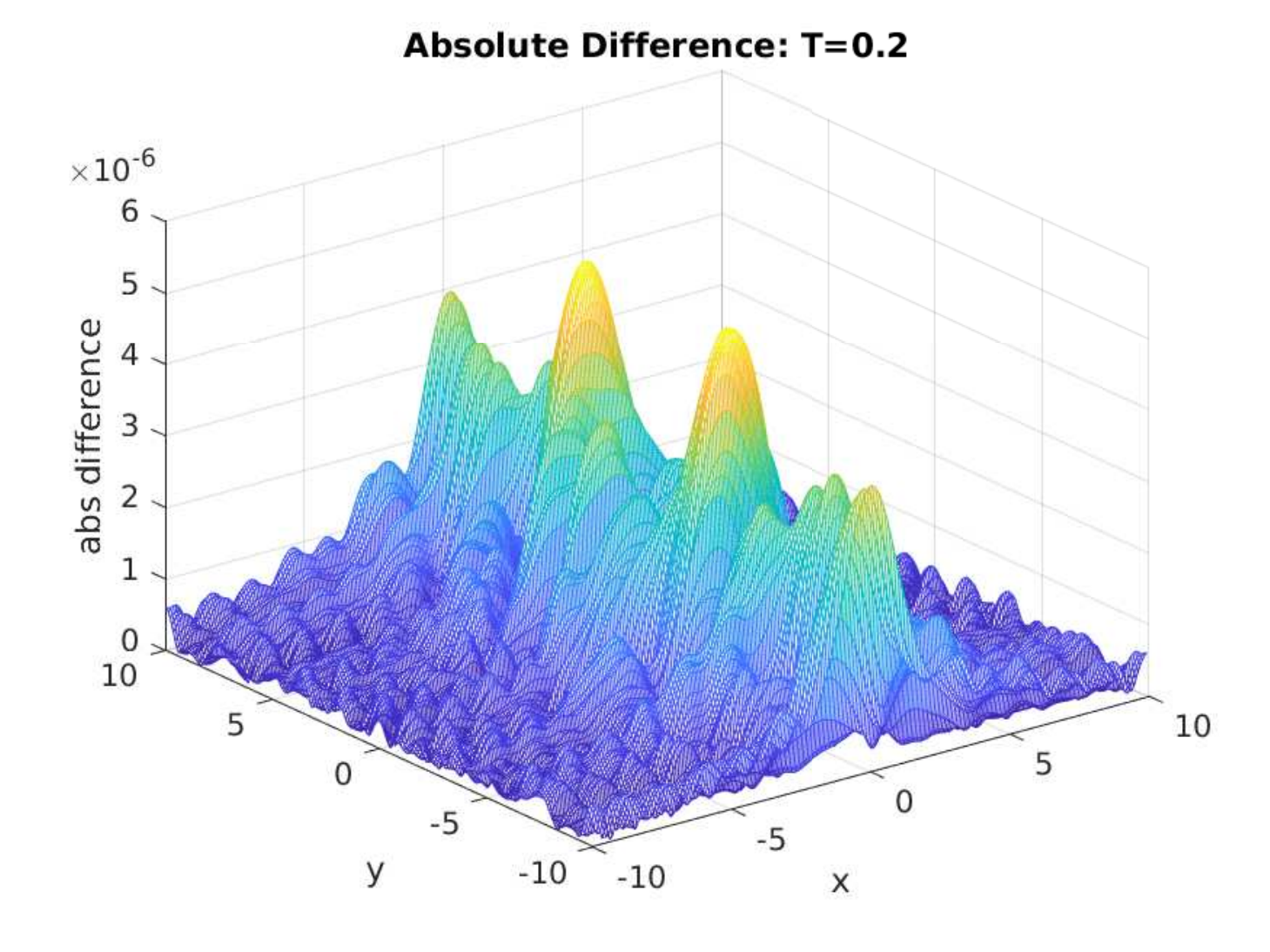}
  \includegraphics[width=5cm,height=5cm]{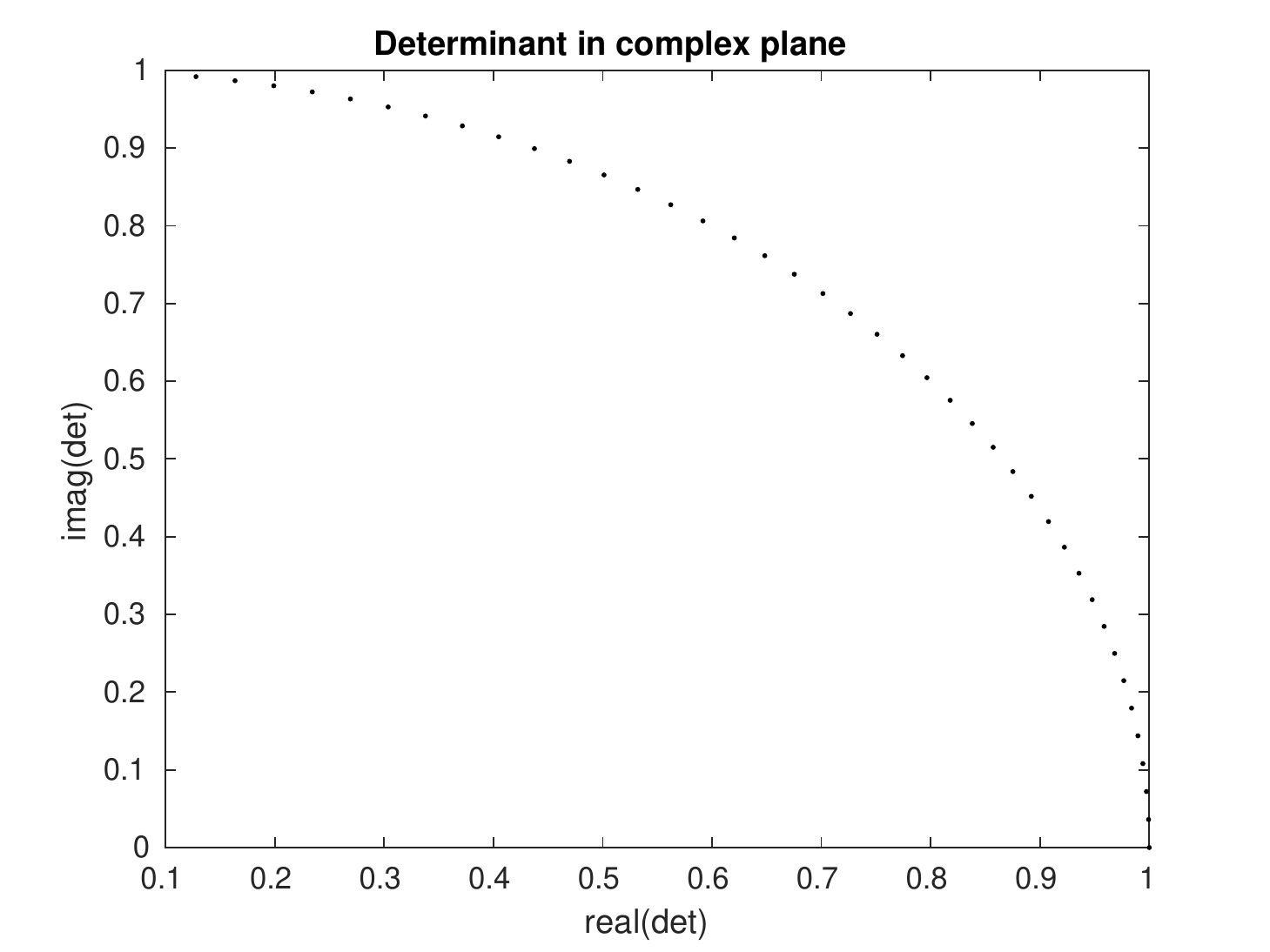}\\
  \end{center}
  \caption{We plot the solution to the nonlocal nonlinear
Sch\"odinger equation with a sinusoidal nonlinearity from Example~\ref{ex:genNLS}. 
We used a generic initial profile 
$g_0(x,y)\coloneqq\mathrm{sech}(x+y)\,\mathrm{sech}(y)$.
For time $T=0.2$, the top panels show the real and imaginary parts 
of the solution computed using a direct integration approach while the middle
panels show the corresponding real and imaginary parts 
of the solution computed using our Riccati approach. The bottom left 
panel shows the magnitude of the difference between the 
two computed solutions. The bottom right panel shows the evolution
of the Fourier transform of the Fredholm determinant associated
with $\widehat{Q}(t)$ for $t\in[0,T]$ in the complex plane.}
\label{fig:genNLS}
\end{figure}

We now present the two special case examples. The first 
is a very special case of the systems in \S\ref{sec:quadflows}
for which the subspace $\Qb$ has co-dimension one with respect
to $\Hb$. We can think of the operator $P$ being parameterized
by an infinite row vector. The second is another special
case when the Riccati relation represents a rank-one
transformation from $Q$ to $P$. Here we use this context to solve
a particular version of the nonlocal 
Fisher--Kolmogorov--Petrovskii--Piskunov equation. 
The Cole--Hopf transformation for the Burgers equation also
represents such a rank-one case; see Beck \textit{et al.\/} \cite{BDMS}.

\begin{example}[Evolutionary diffusive PDE with convolutional nonlinearity]
\label{ex:conv}
In this example we assume the linear base and auxiliary equations 
have the form
\begin{equation*}
\pa_tp(y;t)=d(\pa_y)\,p(y;t)
\qquad\text{and}\qquad
\pa_tq(y;t)=b(\pa_y)\,p(y;t).
\end{equation*}
In these equations we assume the operator $d=d(\pa_y)$
is a polynomial in $\pa_y$ with constant coefficients and that it is 
of diffusive or dispersive type as described in \S\ref{sec:quadflows}. 
We also assume $b=b(\pa_y)$ is a polynomial in $\pa_y$ 
with constant coefficients. We now posit the Riccati relation 
\begin{equation*}
p(y;t)=\int_\R g(z;t)\,q(z+y;t)\,\rd z.
\end{equation*}
Following Remark~\ref{remark:directview} in
\S\ref{sec:quadflows} by differentiating this Riccati relation
with respect to time and using that $p=p(y;t)$ and
$q=q(y;t)$ satisfy the scalar linear base and auxiliary equations above,
we find 
\begin{align*}
\int_\R \pa_tg(z;t)\,q(z+y;t)\,\rd z
=&\;\pa_tp(y;t)-\int_\R g(z;t)\,\pa_tq(z+y;t)\,\rd z\\
=&\;d(\pa_y)\,p(y;t)-\int_\R g(z;t)\,b(\pa_z)\,p(z+y;t)\,\rd z\\
=&\;\int_\R g(z;t)\,d(\pa_y)\,q(z+y;t)\,\rd z\\
&\;-\int_\R g(z;t)\,b(\pa_z)\,\int_\R g(\zeta;t)\,q(\zeta+z+y;t)\,\rd\zeta\,\rd z\\
=&\;\int_\R \bigl(d(-\pa_z)\,g(z;t)\bigr)\,q(z+y;t)\,\rd z\\
&\;-\int_\R \bigl(b(-\pa_z)\,g(z;t)\bigr)\,\int_\R g(\zeta;t)\,q(\zeta+z+y;t)\,\rd\zeta\,\rd z\\
=&\;\int_\R \bigl(d(-\pa_z)\,g(z;t)\bigr)\,q(z+y;t)\,\rd z\\
&\;-\int_\R \bigl(b(-\pa_z)\,g(z;t)\bigr)\,\int_\R g(\xi-z;t)\,q(\xi+y;t)\,\rd\xi\,\rd z\\
=&\;\int_\R \bigl(d(-\pa_z)\,g(z;t)\bigr)\,q(z+y;t)\,\rd z\\
&\;-\int_\R\int_\R \bigl(b(-\pa_\xi)\,g(\xi;t)\bigr)\,g(z-\xi;t)\,\rd\xi\, q(z+y;t)\,\rd z.
\end{align*}
Here we integrated by parts assuming suitable decay in the far-field,
used the substitution $\xi=\zeta+z$ for fixed $z$, and swapped over the 
integration variables $\xi$ and $z$. As in Remark~\ref{remark:directview},
if we postmultiply by `$\delta(z-y)+\tilde{q}^\prime(y,\eta;t)$' and integrate
over $y\in\R$ we find $g=g(\eta;t)$ satisfies 
\begin{equation*}
\pa_tg(\eta;t)=d(-\pa_\eta)\,g(\eta;t)
-\int_\R \bigl(b(-\pa_\xi)\,g(\xi;t)\bigr)\,g(\eta-\xi;t)\,\rd\xi.
\end{equation*}
This is a simpler derivation of Example~1 from 
Beck \textit{et al.\/}~\cite{BDMS} where $b=1$. 
There we derive an explicit form for the Fourier transform of
the solution and compare the result of direct numerical simulations
with evaluation of the solution using our explicit formula.
\end{example}

\begin{example}[Nonlocal Fisher--Kolmogorov--Petrovskii--Piskunov equation]
\label{ex:nonlocalFKPP}
In this example we assume the scalar linear base and auxiliary equations 
have the form
\begin{equation*}
\pa_tp(x;t)=d(\pa_x)\,p(x;t)
\qquad\text{and}\qquad
\pa_tq(x;t)=b(x,\pa_x)\,p(x;t).
\end{equation*}
Here the operator $d=d(\pa_x)$ is assumed to be a polynomial
in $\pa_x$ with constant coefficients of diffusive or dispersive
type as described in \S\ref{sec:quadflows}. We assume that the
operator $b=b(x,\pa_x)$ is either of the form $b=b(x)$ only, where
$b(x)$ is a bounded function, or it is of the form $b=b(\pa_x)$ only,
in which case we assume it is a polynomial in $\pa_x$ with constant 
coefficients. We could assume $b=b(x,\pa_x)$ is a polynomial in $\pa_x$
with non-homogeneous coefficients, the main constraint is whether
we can find an explicit form for the solution $q=q(x;t)$ to the linear
auxiliary equation. We now posit the Riccati relation of the following
rank-one form
\begin{equation*}
p(x;t)=g(x;t)\,\int_\R q(z;t)\,\rd z.
\end{equation*}
For convenience we set 
$\overline{q}(t)\coloneqq\int_\R q(z;t)\,\rd z$,
in which case we have $p(x;t)=g(x;t)\,\overline{q}(t)$ and 
\begin{equation*}
\pa_t\overline{q}(t)=\int_\R b(z,\pa_z)\,p(z;t)\,\rd z.
\end{equation*}
As in \S\ref{sec:quadflows}, in particular for example 
in Remark~\ref{remark:directview}, we differentiate the
Riccati relation with respect to time and substitute in
that $p=p(x;t)$ satisfies the linear base equation and 
$\overline{q}=\overline{q}(t)$ satisfies the equation
just above. Carrying this through generates
\begin{align*}
\bigl(\pa_tg(x;t)\bigr)\,\overline{q}(t)
&=\pa_tp(x;t)-g(x;t)\,\pa_t\overline{q}(t)\\
&=d(\pa_x)p(x;t)-g(x;t)\,\int_\R b(z,\pa_z)\,p(z;t)\,\rd z\\
&=d(\pa_x)g(x;t)\,\overline{q}(t)
-g(x;t)\,\int_\R b(z,\pa_z)\,g(z;t)\,\rd z\,\overline{q}(t).
\end{align*}
Dividing through by $\overline{q}=\overline{q}(t)$ generates the equation
\begin{equation*}
\pa_tg(x;t)=d(\pa_x)g(x;t)-g(x;t)\,\int_\R b(z,\pa_z)\,g(z;t)\,\rd z.
\end{equation*}
Now suppose we wish to solve this evolutionary partial differential
equation with the nonlocal nonlinearity shown for some given initial data $g_0(x)$,
i.e.\/ such that $g(x;0)=g_0(x)$. We naturally take $\overline{q}(0)=1$ 
and $p(x;0)=g_0(x)$. Then that $g(x;t)=p(x;t)/\overline{q}(t)$ is indeed 
the corresponding solution to the evolutionary partial differential equation
for $g=g(x;t)$ above, with $p=p(x;t)$ satisfying the linear base equation
above and $\overline{q}=\overline{q}(t)$ satisfying the integrated
auxiliary equation shown, can be verified by direct substitution.

Let us now consider the special case $b=1$. Then 
by analogy with Lemma~\ref{lemma:explicitlinearquadratic}, the 
solution $p=p(x;t)$ to the linear base equation is given in terms of its Fourier
transform by 
\begin{equation*}
\widehat{p}(k;t)=\exp\bigl(d(2\pi\mathrm{i}k)\,t\bigr)\,\widehat{g}_0(k).
\end{equation*}
By taking the inverse Fourier transform of this and integrating with
respect to the spatial coordinate, we find the solution $\overline{q}=\overline{q}(t)$ 
to the integrated auxiliary equation is then given by
\begin{equation*}
\overline{q}(t)=1+\biggl(\frac{\exp(t\,d(0))-1}{d(0)}\biggr)\,\widehat{g}_0(0).
\end{equation*}
If $d(0)=0$, this becomes $\overline{q}(t)=1+t\,\widehat{g}_0(0)$.
Hence we have an explicit solution for any diffusive or
dispersive form for $d=d(\pa_x)$. If $d(\pa_x)=\pa_x^2+1$,
the partial differential equation for $g=g(x;t)$ above
corresponds to a particular version of the nonlocal 
Fisher--Kolmogorov--Petrovskii--Piskunov equation which is studied 
for example in Britton~\cite{Britton} and Bian, Chen \& Latos~\cite{BCL}. 
\end{example}

\section{Conclusion}\label{sec:conclu}
We have extended our Riccati approach for generating solutions to
nonlocal nonlinear partial differential equations from a corresponding
linear base equation to systems as well as higher odd degree nonlinearities.
These systems can be of arbitrary order in the linear terms and include
higher order terms in the nonlocal nonlinear terms.
We also provided explicit calculations demonstrating how solutions for 
such nonlocal nonlinear systems can be generated in this manner for 
general initial data. For four example systems we also provided numerical
simulations comparing solutions computed using the Riccati approach 
and solutions computed using direct primarily pseudo-spectral 
numerical methods. We provide all the Matlab codes in the supplementary
electronic material. We also indicated multiple immediate extensions 
we intend to consider, for example to tackle the case of 
higher even degree nonlinearities. Additionally we hinted on how
we intend to extend the Riccati approach to the multi-dimensional 
nonlocal nonlinear partial differential equations.

There are many further extensions and practical considerations in our sights. 
One natural extension is to consider using the Riccati approach for nonlocal 
nonlinear stochastic partial differential equations. We would begin with those
with additive space-time noise which could be incorporated via the operator $C$
in the quadratic nonlocal nonlinearity set-up described in \S\ref{sec:quadflows}.
It appears as a linear term in the base equation which would thus become
a linear stochastic partial differential equation. The base and auxiliary
equations would have to be solved as a linear system, which is achievable
in principle. Then the term $C$ appears as a nonhomogeneous source term in
the final Riccati stochastic partial differential equation. Indeed 
we have already performed some simulations of this nature and these
will be published in Doikou, Malham and Wiese~\cite{DMW}.
On the practical consideration side, we note that to compute solutions
using the Riccati method in practice, we may need to approximate the
solution to the linear auxiliary equation, and then typically, we need to solve 
the linear Fredholm integral equation numerically to find the desired solution. 
It would be useful to provide a comprehensive numerical analysis study examining 
the relative complexity of the Riccati approach in these cases compared to the 
state-of-the-art numerical methods available for such nonlinear systems.

The context and examples we have considered thus far have included large
classes of nonlocal nonlinear systems. One way to classify these systems 
is that they can all be thought of as "big matrix" equations with the 
natural extended product encoded in the `$\star$' product. In other words
we think of the linear operators $P$, $Q$ and $G$ as matrix operators 
extended to the infinite-dimensional context, whether countable or not. 
The resulting objects are either countably infinite matrices or are parametrized
by integral kernels. The natural extension of the matrix product is then
the countable discrete version of the star product or the star product
itself. One of our next goals is to consider how to generalize our Riccati approach
so as to incorporate local nonlinearities. One natural approach is to replace 
the Fredholm Riccati relation by a Volterra one.

Lastly, the classes of nonlinear partial differential equations we have
considered may have solutions which become singular in finite time. 
For example the nonlocal nonlinear Schr\"odinger equation with higher
degree nonlinearity or in higher dimensions might exhibit such behaviour.
However let us consider the overarching context of the Riccati approach 
we prescribe which is that of a linear subspace flow projected down onto the 
Fredholm Grassmannian. In principle the solutions to the 
underlying linear base and auxiliary equations which generate 
the solution to the nonlocal nonlinear system do not themselves
become singular in finite time. The singularity in the 
nonlocal nonlinear system is just an artifact of a poor 
choice of coordinate patch on the Fredholm Grassmannian.
It corresponds to the event $\mathrm{det}_2\bigl(\id+Q^\prime(t)\bigr)\to0$,
though we need to be wary of a hierarchy of regularized determinants here that
should be monitored. The coordinate patch choice is made in the projection
\begin{equation*}
\begin{pmatrix} Q\\P\end{pmatrix}\to\begin{pmatrix} \id\\G\end{pmatrix}.
\end{equation*}
Implicit in the projection as shown is that we have equivalenced by
the ``top'' block of suitable general linear transformations, thus generating 
the graph and coordinate patch on the right shown. However we can
equivalence by \emph{any} block of suitable general linear transformations
(for example the lower block instead) generating a different graph
and coordinate patch. Indeed there is a Schubert cell decomposition
of the Fredholm Grassmannian analogous to that in the finite-dimensional case;
see Pressley and Segal~\cite{PS}. Careful analysis of the behaviour of 
the solutions to the underlying linear base and auxiliary equations
on the approach to and transcending through and beyond the singularity
in a given coordinate patch might reveal more detailed information about
the singularity and will provide a mechanism for continuing solutions
beyond it. 


\begin{acknowledgement}
We would like to thank the referees for their insightful comments and constructive
suggestions that helped significantly improve the original manuscript.
We would also like to thank Anke Wiese for her helpful comments and suggestions and 
Jonathan Sherratt for useful discussions. The work of M.B. was partially supported by 
US National Science Foundation grant DMS-1411460.
\end{acknowledgement}

\end{document}